\numberwithin{equation}{section}
\theoremstyle{plain}
\newtheorem{theorem}{Theorem}[section]
\newtheorem{lemma}[theorem]{Lemma}
\newtheorem{proposition}[theorem]{Proposition}
\newtheorem{corollary}[theorem]{Corollary}
\theoremstyle{definition}
\newtheorem{definition}[theorem]{Definition}
\newtheorem{remark}[theorem]{Remark}
\newtheorem{example}[theorem]{Example}
\newcommand\bA{{\mathbb A}}
\newcommand\bG{{\mathbb G}}
\newcommand\bP{{\mathbb P}}
\newcommand\bV{{\mathbb V}}
\newcommand\bZ{{\mathbb Z}}
\newcommand\cI{{\mathcal I}}
\newcommand\cJ{{\mathcal J}}
\newcommand\cO{{\mathcal O}}
\newcommand\cT{{\mathcal T}}
\newcommand\fg{\mathfrak{g}}
\newcommand\fh{\mathfrak{h}}
\newcommand\fm{\mathfrak{m}}
\newcommand\codim{{\rm codim}}
\newcommand\dom{{\rm dom}}
\newcommand\fr{{\rm fr}}
\newcommand\he{{\rm ht}}
\newcommand\id{{\rm id}}
\newcommand\pr{{\rm pr}}
\newcommand\red{{\rm red}}
\newcommand\reg{{\rm reg}}
\newcommand\sing{{\rm sing}}
\newcommand\Aut{{\rm Aut}}
\DeclareMathOperator\Bl{Bl}
\newcommand\Der{{\rm Der}}
\newcommand\GL{{\rm GL}}
\newcommand\Lie{{\rm Lie}}
\newcommand\N{{\rm N}}
\newcommand\Spec{{\rm Spec}}
\DeclareMathOperator\Stab{Stab}
\newcommand\Sym{{\rm Sym}}
\title{Actions of finite group schemes on curves}
\author{Michel Brion}
\date{}
\begin{document}

\maketitle

\begin{abstract}
Every action of a finite group scheme 
$G$ on a variety admits a projective
equivariant model, but not necessarily
a normal one. As a remedy, we introduce 
and explore the notion of $G$-normalization. 
In particular, every curve equipped with 
a $G$-action has a unique projective 
$G$-normal model, characterized by the 
invertibility of ideal sheaves of all orbits.
Also, $G$-normal curves occur naturally
in some questions on surfaces in positive
characteristics.
\end{abstract}

\section{Introduction}
\label{sec:int}

Much is known about finite groups of
automorphisms of algebraic varieties, 
the case of curves being the most 
classical and well-understood. By contrast, 
finite group schemes of automorphisms
(in characteristic $p > 0$) seem to 
have attracted little attention until very 
recent years, where they have been 
determined for several natural classes 
of surfaces; see e.g. \cite{Tziolas}, 
\cite{DD}, \cite{Ma20}, \cite{Ma22}.
One reason may be that key properties
of finite group actions fail for 
a finite group scheme $G$, for example: 

\begin{itemize}

\item 
If $G$ acts faithfully on a variety $X$, 
then it may not act freely on a dense open 
subset.

\item
The $G$-action on $X$ may not lift 
to an action on the normalization.  
 
\end{itemize}

When $X$ is a normal projective curve,
its group of birational automorphisms
coincides with the automorphism group,
and hence is the group of rational
points of an algebraic group.
Moreover, every finite group is the 
full automorphism group scheme of 
some smooth projective curve over 
an algebraically closed field (see 
\cite{MV}). These results also do 
not extend to the schematic setting,
already for infinitesimal group schemes
of height at most $1$ (which are 
in bijective correspondence with 
finite-dimensional $p$-Lie algebras):

\begin{itemize}

\item
Every curve admits $p$-Lie algebras 
of rational vector fields of any 
prescribed dimension.

\item
But there are strong restrictions on 
the $p$-Lie algebras which can be realized 
by rational vector fields on a curve, 
as follows from \cite[Thm.~12.1]{ST}. 

\end{itemize}

In this paper, we propose remedies to
some of these failures. 
As a substitute for ``generic freeness'', 
we show that every action
of a finite group scheme $G$ on a variety
$X$ is ``generically transitive'' 
(Corollary \ref{cor:homogeneous}; 
if $G$ is infinitesimal of height $1$, 
this is \cite[Prop.~5.2]{ST}). 

As a substitute for the normalization, 
we introduce and explore the notion of 
``$G$-normalization'' in Section \ref{sec:Gnor}. 
In particular, we show that every rational 
$G$-action on $X$ admits a projective 
$G$-normal model, which is unique 
if $X$ is a curve (Corollary 
\ref{cor:Gnormodel}). We also obtain a version
of Serre's criterion for $G$-normality
(Theorem \ref{thm:serre}), which takes
a more specific form in dimension $1$:
a curve is $G$-normal if and only if
the ideal sheaf of every $G$-orbit is 
invertible (Corollary \ref{cor:Gnormal}).

Let us emphasize that $G$-normal curves 
are generally singular. On the positive 
side, they turn out to be geometrically
unibranch (Corollary \ref{cor:unibranch})
and local complete intersections 
(Corollary \ref{cor:lci}).
Moreover, the tangent sheaf of every
generically free $G$-normal curve is 
invertible (Proposition \ref{prop:tangent}).

Finally, $G$-normal curves are related to
regular surfaces via the following 
construction: let $X$ be a curve equipped
with a $G$-action, and assume that
$G$ is a subgroup scheme of a smooth
connected algebraic group $G^{\#}$
of dimension $1$. Consider the
diagonal action of $G$ on 
$G^{\#} \times X$; then the quotient 
$S$ is a surface equipped with 
a $G^{\#}$-action. Moreover, $X$ is 
$G$-normal if and only if $S$ is regular
(Proposition \ref{prop:regular}).
The case where $X$ is projective and
$G^{\#}$ is an elliptic curve is of
special interest, since $S$ is projective
(then the above construction goes back 
to \cite{BMIII}).  In this case,
 $G$-normal curves provide the missing
ingredient in the recent classification
of maximal connected algebraic groups
of birational automorphisms of surfaces,
see \cite{Fong} and Proposition 
\ref{prop:reduced}.

This paper is organized as follows. 
Section \ref{sec:prel} begins with 
preliminary results on finite group
scheme actions that we could not find
in the literature; we then show that
such actions are generically transitive.
In Section \ref{sec:rat}, we investigate
rational $G$-actions on a variety $X$;
in particular, such rational actions correspond 
bijectively to actions on the generic 
point. This is then used to construct 
faithful rational actions of 
infinitesimal group schemes of height $1$. 
Section \ref{sec:Gnor} makes the first
steps in the study of $G$-normal varieties, 
with applications to curves. The final 
Section \ref{sec:gen} is devoted to 
generically free actions on curves. 
We obtain in particular a local model 
for such an action at a fixed point 
(Proposition \ref{prop:local}). 

We conclude this introduction with two 
open questions. 
For a curve equipped with a generically
free action of an infinitesimal group scheme
$G$, it is easy to see that the Lie algebra
of $G$ has dimension $1$ (Lemma 
\ref{lem:gen}). Of course, this holds 
if $G$ is a subgroup scheme of 
a smooth connected algebraic group of 
dimension $1$. Are there any further examples? 
In particular, are these group schemes 
commutative?

Also, every curve $X$ as above 
may be viewed as a ramified $G$-cover 
of the quotient $Y = X/G$; the latter is normal
if $X$ is $G$-normal. Can we then determine
$X$ in terms of ramification
data on $Y$, in analogy with the known 
description of abelian covers (see e.g. 
\cite{Pardini}, \cite{AP})?

\section{Preliminaries}
\label{sec:prel}

We fix a ground field $k$ of characteristic
$p \geq 0$, and choose an algebraic closure $\bar{k}$.
Given a field extension $K/k$ and a $k$-scheme $X$, 
we denote by $X_K$ the $K$-scheme 
$X \times_{\Spec(k)} \Spec(K)$, with projection
$\pi : X_K \to X$.

A \emph{variety} $X$ is a separated, geometrically 
integral scheme of finite type over $k$.
A \emph{curve} (resp.~a \emph{surface})
is a variety of dimension $1$ (resp.~$2$).

The field of rational functions on a variety 
$X$ is denoted by $k(X)$. This is a 
\emph{function field in $n$ variables}
where $n = \dim(X)$, i.e., a separable,
finitely generated field extension $K$ 
of $k$, such that $k$ is algebraically closed 
in $K$. Conversely, every function field $K$ 
in $n$ variables is the field of rational 
functions on some $n$-dimensional variety $X$,
a \emph{model} of $K$.

Throughout this paper, we denote by $G$
a finite group scheme, and by $\vert G \vert$
its \emph{order}, i.e., the dimension of the
$k$-vector space 
$\cO(G) = \Gamma(G,\cO_G)$. If $p = 0$ then 
$G$ is \'etale by Cartier's theorem (see 
\cite[II.6.1.1]{DG}). This fails if $p > 0$,
where basic examples of non-\'etale finite
group schemes are $\mu_p$ 
(the multiplicative group scheme of $p$th
roots of unity) and $\alpha_p$ (the kernel of
the $p$th power map in the additive group). 

The connected component of the neutral element 
$e \in G(k)$ is denoted by $G^0$; this is 
an \emph{infinitesimal} group scheme, i.e., 
a finite group scheme having a unique point. 
Also, $G^0$ is a normal subgroup scheme of $G$, 
and $\pi_0(G) = G/G^0$ is \'etale.
If $k$ is perfect, then the reduced subscheme 
$G_{\red}$ is the largest \'etale 
subgroup scheme of $G$, and 
$G = G^0 \rtimes G_{\red}$; in particular,
$G_{\red} 
\stackrel{\sim}{\longrightarrow}
\pi_0(G)$
(see \cite[II.5.1.1, II.5.2.4]{DG}).

Returning to an arbitrary ground field $k$,
we denote by $\Lie(G)$ the Lie algebra
of $G$. Then $\Lie(G^0) = \Lie(G)$;
in particular, $\Lie(G) = 0$ if $G$
is \'etale (e.g., if $p = 0$). If $p >0$
then $\Lie(G)$ has the structure of
a finite-dimensional $p$-Lie algebra, 
also called a restricted Lie algebra;
see \cite[II.7.3.4]{DG}.

Still assuming $p > 0$, we denote by
\[ F_X : X \longrightarrow X^{(p)} \]
the relative Frobenius morphism of 
a scheme $X$, and by
\[ F_X^n : X \longrightarrow X^{(p^n)} 
\]
its $n$th iterate, where $n$ is a positive
integer. Given $G$ as above, each 
$F^n_G$ is a homomorphism of group schemes, 
with kernel the \emph{$n$th Frobenius kernel} 
$G_n$. Moreover, $G$ is infinitesimal 
if and only if $G_n = G$ for $n \gg 0$; 
then the smallest such $n$ is 
the \emph{height} $\he(G)$.
The assignment $G \mapsto \Lie(G)$
yields an equivalence of categories 
between finite group schemes of height 
at most $1$
and finite-dimensional $p$-Lie algebras;
moreover, we have $\Lie(G_1) = \Lie(G)$
(see \cite[II.7.3.5, II.7.4.1]{DG}).

A $G$-\emph{scheme} is a scheme $X$ 
equipped with a $G$-action
\[ \alpha : G \times X \longrightarrow X, 
\quad (g,x) \longmapsto g \cdot x. \]
Note that $\alpha$ is identified with
the projection
$G \times X \to X$ via the automorphism
$(\pr_1,\alpha)$ of $G \times X$. 
In particular, the morphism 
$\alpha$ is finite and locally free.
The $G$-action is said to be \emph{faithful}
if every non-trivial subgroup scheme acts
non-trivially.

A morphism of $G$-schemes $f : X \to Y$ is
\emph{equivariant} if 
$f(g \cdot x) = g \cdot f(x)$
identically on $G \times X$.

Given a $G$-scheme $X$, the \emph{stabilizer}
$\Stab_G$ is the preimage of the diagonal under 
the \emph{graph morphism} 
\[ \gamma : G \times X \longrightarrow 
X \times X, \quad
(g,x) \longmapsto (g \cdot x, x). \]
Via the second projection, $\Stab_G$ is 
a closed subgroup scheme of the $X$-group scheme
$G \times X$; in particular, the projection
$\Stab_G \to X$ is finite. 

We now consider a $G$-scheme $X$ of finite type, 
and a closed subscheme $Y \subset X$. The action 
$\alpha$ restricts to a finite morphism
$\alpha_Y : G \times Y \to X$, with schematic 
image denoted by $G \cdot Y$. We say that
$Y$ is \emph{$G$-stable} if $G \cdot Y = Y$;
equivalently, $\alpha_Y$ factors through $Y$. 
For an arbitrary closed subscheme $Y$, note 
that $G \cdot Y$ is the smallest closed 
$G$-stable subscheme of $X$ containing $Y$. 

In particular, taking for $Y$ a closed point 
$x \in X$, we obtain the \emph{$G$-orbit} 
$G \cdot x$. We say that $x$ is 
\emph{$G$-fixed} if it is $G$-stable 
and the induced action of $G$ on 
$\Spec(\kappa(x))$ is trivial, 
where $\kappa(x)$ denotes the residue 
field at $x$. Equivalently, $x$ lies in
the fixed point subscheme $X^G$
(the largest $G$-stable closed subscheme
of $X$ on which $G$ acts trivially).

In the opposite direction, 
the $G$-action is said to be \emph{free} 
at $x \in X$ if the stabilizer 
$\Stab_G(x)$ is trivial. 
We denote by $X_{\fr}$ the set of free points
of $X$; this is an open $G$-stable subset 
of $X$. For a faithful action of an \'etale 
group $G$, it is easy to see that $X_{\fr}$
is non-empty. But this does not extend to
arbitrary faithful actions, as shown by the 
example of $\alpha_p \times \alpha_p$ acting 
on the affine plane $\bA^2$ via
$(u,v) \cdot (x,y) = (x, y + u + x v)$.

\begin{lemma}\label{lem:quot}
Let $X$ be a $G$-scheme of finite type
such that every $G$-orbit is contained
in an open affine subset. Then there is 
a categorical quotient by $G$,
\[ q : X \longrightarrow Y = X/G, \]
where $Y$ is a scheme of finite type.
Moreover, $q$ is finite and surjective,
with fibers at closed points being 
the $G$-orbits (as sets).

If in addition $G$ acts freely on $X$, then
$q$ is faithfully flat and the graph morphism 
$\gamma$ induces an isomorphism 
$G \times X \stackrel{\sim}{\longrightarrow}
X \times_Y X$. 
\end{lemma}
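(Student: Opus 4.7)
\emph{Plan.} The approach is to reduce to the affine case by a norm construction, build the quotient as the spectrum of the invariant ring, glue the local pieces, and handle the free case separately. For the reduction, fix $x \in X$ and an affine open $U = \Spec(A) \supset G \cdot x$ (available by hypothesis). The set $W = \{x' \in X : G \cdot x' \subset U\}$ is open, $G$-stable, and contained in $U$: its complement is the image of $(G \times X) \setminus a^{-1}(U)$ under the finite (hence closed) projection $p_2 \colon G \times X \to X$. Prime avoidance produces $f \in A$ non-vanishing on the finite subscheme $G \cdot x$ and vanishing on $U \setminus W$. The pullback $a^*(f) \in \cO(G) \otimes A$ acts by $A$-linear multiplication on the free rank-$|G|$ module $\cO(G) \otimes A$; its determinant $N(f) \in A$ is $G$-invariant (a standard computation using the natural $G$-action on $\cO(G) \otimes A$) and non-vanishing at $x$, so $D(N(f))$ is a $G$-stable principal affine open containing $G \cdot x$.

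For the affine case $X = \Spec(A)$, set $A^G := \{a \in A : \Delta(a) = 1 \otimes a\}$ where $\Delta$ is the coaction, and $Y := \Spec(A^G)$ with $q$ induced by the inclusion $A^G \hookrightarrow A$. The characteristic polynomial $\chi_f(T) \in A[T]$ of the operator $m_{a^*(f)}$ from the previous paragraph is monic of degree $|G|$ with coefficients in $A^G$, and Cayley--Hamilton combined with the counit identity yields $\chi_f(f) = 0$ in $A$. Every $f \in A$ is therefore integral over $A^G$; since $A$ is finitely generated over $k$, $A$ is finite over $A^G$, and $A^G$ is finitely generated over $k$ by a Noetherian argument. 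The categorical quotient universal property is immediate from the equaliser description of $A^G$, finiteness and surjectivity of $q$ are then formal, and the set-theoretic fibres are orbits by analysing closed points of $\Spec(A \otimes_{A^G} \kappa(y))$. Since $G$-invariants commute with $G$-stable localization, the local constructions glue to a scheme $Y = X/G$ of finite type over $k$ with $q$ inheriting all stated properties.

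For the free case, pointwise triviality of $\Stab_G(x)$ combined with the finiteness of $\Stab_G \to X$ forces the identity section $X \to \Stab_G$ to be an isomorphism (by Nakayama, as every fibre has length one), which is precisely the statement that $\gamma \colon G \times X \to X \times X$ is a monomorphism. Hence $\gamma' \colon G \times X \to X \times_Y X$ is also a monomorphism, and a fibrewise count using freeness shows $\gamma'$ is moreover surjective. To finish, a standard flatness argument (based on the fact that every fibre of $q$ has length exactly $|G|$ under the free action) gives $q$ finite flat of rank $|G|$; consequently $X \times_Y X \to X$ is finite flat of rank $|G|$, and the surjective monomorphism $\gamma'$ between finite flat $X$-schemes of common rank is forced to be an isomorphism. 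The main obstacle is establishing flatness of $q$ before having the structural isomorphism $\gamma'$ in hand; I would resolve this by first computing the orbit lengths directly from freeness and then invoking the constant-rank flatness criterion for finite modules, after which the two conclusions emerge together.
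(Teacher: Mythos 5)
The paper itself does not prove this lemma: it cites \cite[\S 12, Thm.~1]{Mumford} (and \cite[III.2.6.1]{DG}), so the relevant comparison is with Mumford's argument. Your first two paragraphs reconstruct that argument faithfully --- the norm trick to produce a $G$-stable affine open around a given orbit, invariants plus Cayley--Hamilton for integrality of $A$ over $A^G$, Artin--Tate for finite generation, and gluing. Two small imprecisions: since $U$ is not $G$-stable, $a^*(f)$ does not lie in $\cO(G)\otimes A$; one must first observe that $a(G\times D(f))\subset U$ (because $D(f)\subset W$) and form the norm in $A_f$, whose non-vanishing locus inside $D(f)$ is the desired $G$-stable affine open. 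And ``fibres are orbits'' again needs the norm (to produce an invariant function separating two disjoint closed $G$-stable subsets), which you only gesture at. Both are repairable along the lines you already set up.

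The genuine gap is the flatness of $q$ in the free case. You propose to deduce it from ``every fibre has length exactly $\vert G\vert$'' together with the constant-fibre-rank criterion for finite modules. That criterion requires the base to be reduced, whereas the lemma is stated for arbitrary $G$-schemes of finite type: for instance $\mu_p$ acts freely on $\mu_{p^2}$ by translation with quotient $\mu_p$, and both are non-reduced in characteristic $p$. Even for varieties, the rank at the generic point is the equality $[k(X):k(X)^G]=\vert G\vert$, which is exactly Corollary \ref{cor:degree} of the paper --- proved there only later and by a substantially harder argument --- so it cannot be taken as a ``direct computation from freeness''; likewise, at a closed point the fibre contains the orbit but could a priori be a larger infinitesimal thickening of it, and bounding its length above by $\vert G\vert$ is the real content. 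Mumford's proof avoids this circle: using freeness he shows the comorphism $A\otimes_{A^G}A\to\cO(G)\otimes_k A$ of $\gamma'$ is surjective, lifts a basis of $\cO(G)\otimes\kappa(x)$ to elements $a_1,\dots,a_n\in A$ whose images $a^*(a_i)$ form a basis of the free $A$-module $\cO(G)\otimes_k A$ near $x$, and then checks via Nakayama and the counit section that the $a_i$ form a local basis of $A$ over $A^G$; flatness and the isomorphism $G\times X\simeq X\times_Y X$ then come out together. Your closing step (a finite monomorphism between finite locally free $X$-schemes of equal rank is an isomorphism) is fine, but only once flatness has been established by such an argument.
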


Equivalently, $q$ is a \emph{$G$-torsor} 
if the action is free. 

Lemma \ref{lem:quot} is obtained in 
\cite[\S 12, Thm.~1]{Mumford}
under the assumption that $k$ is algebraically 
closed; the proof extends unchanged to an arbitrary 
field (see \cite[III.2.6.1]{DG} for another proof).

The assumption that every $G$-orbit is 
contained in an open affine subset 
is satisfied if $X$ is quasi-projective 
(then every finite set of points is 
contained in an open affine subset); 
in particular, if $X$ is a curve. 
This assumption is also satisfied
if $G$ is infinitesimal (then the $G$-orbits 
are just fat points);
in that case, the quotient morphism is radicial
and bijective, see e.g. \cite[Lem.~2.5]{Br17a}.
But an example of Hironaka 
(see \cite{Hironaka}) yields an action
of the constant group $\bZ/2\bZ$ on 
a smooth proper threefold which admits no 
categorical quotient.

In the opposite direction, 
if the categorical quotient $q: X \to Y$ 
exists and is finite,
then $X$ is covered by open affine 
$G$-stable subsets. Moreover, for any 
open $G$-stable subset $U$ of $X$, the image
$V = q(U)$ is open in $Y$ and the restriction 
$q\vert_U : U \to V$ is the categorical quotient. 
In particular, we have a $G$-torsor 
$X_{\fr} \to X_{\fr}/G = Y_{\fr}$.

Given a closed $G$-stable subset 
$i : Z \subset X$, the quotient $Z\to Z/G$ 
also exists and hence comes with a morphism 
$i/G: Z/G \to X/G$.
If $G$ is linearly reductive, then $i/G$
is a closed immersion; also, recall that
the linearly reductive groups 
are exactly the extensions of finite 
\'etale groups of order prime to $p$ by groups 
of multiplicative type (see \cite[IV.3.3.6]{DG}).
For an arbitrary group $G$, the morphism
$i/G$ is not necessarily a closed immersion,
as show by the example of $\alpha_p$
acting on $\bA^2$ via 
$u \cdot (x,y) = (x, y + u x)$;
then the quotient is the morphism
$(x,y) \mapsto (x,y^p)$. The zero subscheme
$Z$ of $x$ is a $G$-fixed affine line with 
coordinate $y$, and hence has quotient
the morphism $y \mapsto y$.

Next, recall that the formation of the categorical 
quotient commutes with flat base change on $Y$.
As an easy consequence, for any normal subgroup 
scheme $N \triangleleft G$, we obtain an action of 
$G/N$ on $X/N$ such that the quotient morphism
$X \to X/N$ is equivariant, and the induced morphism 
$(X/N)/(G/N) \to X/G$ is an isomorphism.

\begin{lemma}\label{lem:affine}
Let $X$ be a $G$-scheme of finite type, and 
$U \subset X$ an open subset. Then $U$ contains 
a dense open affine $G$-stable subset.
\end{lemma}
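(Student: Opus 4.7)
The plan is: choose an open affine $W \subseteq U$; carve out the $G$-stable open subset $V_0 \subseteq W$ consisting of points whose entire $G$-orbit already lies in $W$; then pass to the categorical quotient $V_0/G$ and pull back an affine neighbourhood of the generic point.

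Since $X$ is of finite type and $U$ is non-empty, an open affine $W \subseteq U$ exists. Because $\pr_2 \colon G \times X \to X$ is finite, hence closed, the set
\[
V_0 \;:=\; X \setminus \pr_2\bigl((G \times X) \setminus a^{-1}(W)\bigr)
\]
is open in $X$. Checking on points, $V_0 = \{x \in X : G \cdot x \subseteq W\}$; in particular $V_0 \subseteq W$ and $V_0$ is $G$-stable (since $G \cdot (g_0 \cdot x) = G \cdot x$). The generic point $\eta$ of $X$ lies in $V_0$: every geometric point of $G$ acts on $X_{\bar{k}}$ by an automorphism that topologically fixes $\eta$, whence $G \cdot \eta = \{\eta\} \subseteq W$. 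In particular $V_0$ is dense in $X$.

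Since $V_0$ is quasi-affine, for any finite subset $F \subseteq V_0$ standard prime avoidance furnishes $f \in \cO(W)$ vanishing on the closed set $W \setminus V_0$ but at no point of $F$; hence $D(f) \subseteq V_0$ is an open affine containing $F$. Every $G$-orbit in $V_0$ therefore sits in an open affine of $V_0$, and Lemma~\ref{lem:quot} applies, yielding a finite categorical quotient $q \colon V_0 \to V_0/G$ with $V_0/G$ of finite type over $k$ and irreducible (as a continuous image of $V_0$). Choose an open affine $\bar W \subseteq V_0/G$ containing the generic point $q(\eta)$, and set $V := q^{-1}(\bar W)$. Then $V$ is $G$-stable, open in $V_0$, contains $\eta$ (hence is dense), and is affine, since $q|_V \colon V \to \bar W$ is finite with affine target. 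The inclusions $V \subseteq V_0 \subseteq W \subseteq U$ complete the construction.

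The main obstacle is producing $V_0$ when $G$ is infinitesimal: the classical formula $V_0 = \bigcap_{g \in G(\bar{k})} g^{-1} W$, available for \'etale $G$, collapses because $G$ has no non-trivial geometric points, and one cannot intersect affine opens indexed by group elements. The scheme-theoretic description above is the remedy; it relies only on properness of $\pr_2$, and non-emptiness of $V_0$ then comes for free from the trivial observation that each geometric point of $G$ fixes $\eta$ topologically.
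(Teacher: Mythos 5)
Your proof is correct, and it takes a genuinely different route from the paper's. The paper first reduces to the \'etale case by passing to $X/G^0$ (using that every open subset is $G^0$-stable and that the quotient by $G^0$ is a finite homeomorphism), then splits $G$ over a finite Galois extension $k'/k$, takes the finite intersection $\bigcap_{g \in G(k')} g \cdot U_{k'}$, and concludes by Galois descent. You instead treat the infinitesimal and \'etale parts uniformly: your scheme-theoretic definition of $V_0$ as the complement of $\pr_2\bigl((G\times X)\setminus a^{-1}(W)\bigr)$ is exactly the right replacement for the intersection of translates, and it dispenses with both the base change and the descent step. The price is that you must then manufacture affineness separately, which you do by verifying the hypothesis of Lemma~\ref{lem:quot} on $V_0$ via prime avoidance and pulling back an affine neighbourhood of the generic point of $V_0/G$ along the finite quotient map; the paper gets affineness for free from the finite intersection of affines (in a separated scheme) plus the fact that preimages of affines under the finite map $X \to X/G^0$ are affine. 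Two small points to tighten: your appeal to geometric points of $G$ to see that $G\cdot\eta=\{\eta\}$ topologically is slightly loose (geometric points see only $G_{\red,\bar k}$); it is cleaner to note that $a$ and $\pr_2$ are both finite and flat, so each identifies the maximal points of $G\times X$ with the fibre over $\eta$, whence $a(\pr_2^{-1}(\eta))=\{\eta\}$. And your argument, like the paper's statement, implicitly takes $X$ irreducible in speaking of \emph{the} generic point and of density; this matches every application in the paper ($X$ a variety), but for a general reducible $G$-scheme of finite type one would run the same argument over the finitely many maximal points of $U$.
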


\begin{proof}
The quotient morphism $X \to X/G^0$ exists 
and is finite, radicial and $G$-equivariant, 
where $G$ acts on $X/G^0$ via its \'etale quotient 
$G/G^0 = \pi_0(G)$. 
Moreover, every open subset of $X$ is 
$G^0$-stable. Thus, it suffices to prove the
assertion for the $\pi_0(G)$-scheme $X/G^0$. 

So we may assume that $G$ is \'etale; then $G_{k'}$ 
is constant for some finite Galois field extension 
$k'/k$. We may further assume that $U$ is affine;
then $U_{k'}$ contains 
$\bigcap_{g \in G(k')} g \cdot U_{k'}$
as a dense open affine subset, stable by $G(k')$
and hence by $G_{k'}$, and also by the action of
the Galois group of $k'/k$. The statement follows 
from this by Galois descent. 
\end{proof}

\begin{lemma}\label{lem:field}
Let $X$ be a $G$-variety with function field $K$.
Choose a dense open affine $G$-stable 
subset $U \subset X$. 

\begin{enumerate}

\item[{\rm (i)}]
The field of invariants $L = K^G$ is 
the fraction field of the ring of invariants
$\cO(U)^G$.

\item[{\rm (ii)}] 
The scheme $\Spec(K)$ is the generic fiber 
of the quotient $q : U \to U/G$.  

\item[{\rm (iii)}] 
There is a unique action of $G$ on $\Spec(K)$ 
such that the morphism 
$\Spec(K) \to X$ is equivariant. 

\item[{\rm (iv)}]
The extension $K/L$ is finite. Moreover, 
$K/K^{G^0}$ is purely inseparable, and 
$K^{G^0}/L$ is separable.

\end{enumerate}

\end{lemma}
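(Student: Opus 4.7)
Write $q: U \to Y = \Spec(\cO(U)^G)$ for the finite surjective categorical quotient provided by Lemma \ref{lem:quot}, and set $L' = \mathrm{Frac}(\cO(U)^G)$, the function field of $Y$. The strategy is to prove (i) and (ii) together by computing the generic fiber of $q$, then deduce (iii) by restricting $a$ to this fiber, and finally (iv) by factoring $q$ through the intermediate quotient $U/G^0$.

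For (i) and (ii), I would examine the generic fiber $\Spec(A)$ of $q$, where $A = \cO(U) \otimes_{\cO(U)^G} L'$. Viewed inside $K$, $A$ is a localization of the domain $\cO(U)$, hence itself a domain; being a finite $L'$-algebra supported on a single point (the generic point of $U$ is the only preimage of the generic point of $Y$), it is Artinian. An Artinian domain is a field, so $A$ is a subfield of $K$ containing $\cO(U)$, forcing $A = K$; this gives (ii). For (i), the base change $\cO(U)^G \to L'$ is flat, so $G$-invariants commute with it, yielding $K^G = A^G = \cO(U)^G \otimes_{\cO(U)^G} L' = L'$. The substantive content hidden here is that localizing $\cO(U)$ at $\cO(U)^G \setminus \{0\}$ really exhausts $K$, equivalently that every nonzero $b \in \cO(U)$ divides a nonzero invariant; I would supply this using the norm $N_{\cO(U)/\cO(U)^G}$, available since $q$ is finite locally free. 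This is the step I expect to be the main obstacle.

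Part (iii) is obtained by base change. Since $a: G \times U \to U$ is identified with $\pr_2$ via the automorphism $(\pr_1, a)$ of $G \times U$ and is therefore flat, pulling back $a$ along the inclusion $\Spec(K) \hookrightarrow U$ yields a morphism $G \times \Spec(K) \to \Spec(K)$, and the associativity and identity axioms descend from $a$ to this restriction. Uniqueness is formal: $\Spec(K) \hookrightarrow X$ is a monomorphism, so any two $G$-actions on $\Spec(K)$ inducing the same composition $G \times \Spec(K) \to X$ must coincide.

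For (iv), $K/L$ is finite because $q$ is finite and $[K:L]$ equals the generic rank of $q$. To decompose this extension, factor $q$ as $U \to U/G^0 \to U/G$; applying (i) to the $G^0$-action identifies the function field of $U/G^0$ with $K^{G^0}$. The first morphism is radicial (its geometric fibers are $G^0$-orbits, which are infinitesimal fat points), so $K/K^{G^0}$ is purely inseparable. The second is a quotient by the \'etale group scheme $\pi_0(G)$; after replacing $\pi_0(G)$ by the quotient through which it acts faithfully on $U/G^0$ (which does not alter $U/G$), the free locus is open and dense by the remarks preceding Lemma \ref{lem:quot}, so $U/G^0 \to U/G$ is generically \'etale and $K^{G^0}/L$ is separable. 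Everything outside the norm step in (i)--(ii) is then a formal consequence of Lemma \ref{lem:quot} and the radicial/\'etale dichotomy for $G^0$ and $\pi_0(G)$.
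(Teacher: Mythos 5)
Your plan is correct, and for parts (iii) and (iv) it runs parallel to the paper's proof: the paper also obtains the action on $\Spec(K)$ by exhibiting $K$ as a $G$-stable localization of $\cO(U)$, and also gets (iv) from the factorization $U \to U/G^0 \to U/G$ together with the radicial/\'etale dichotomy for $G^0$ and $\pi_0(G)$. For (i)--(ii), however, you take a genuinely different route. The paper proves (i) first by Mumford's norm argument: for $l \in K^G$ the ideal $I = \{r \in \cO(U) : rl \in \cO(U)\}$ is nonzero and $G$-stable, and the norm $\N$ attached to the finite \emph{free} module $\cO(G)\otimes_k \cO(U)$ over $\cO(U)$ (free because $a$ is $\pr_2$ composed with the automorphism $(\pr_1,a)$) produces a nonzero element of $I^G$; the identification of the generic fiber comes afterwards. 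You instead prove (ii) first --- $S^{-1}\cO(U)$ with $S = \cO(U)^G\setminus\{0\}$ is a finite algebra over $L'$ and a domain sitting inside $K$, hence a field, hence equal to $K$ --- and then deduce (i) by flat base change of the exact sequence $0 \to \cO(U)^G \to \cO(U) \to \cO(G)\otimes_k\cO(U)$ along the localization $\cO(U)^G \to L'$. This is complete as written and is arguably more economical, since it bypasses the norm altogether.

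The one point to correct is the step you flag as ``the main obstacle.'' First, the statement that every nonzero $b \in \cO(U)$ divides a nonzero invariant is not an input to your argument but a corollary of $S^{-1}\cO(U) = K$: writing $1/b = r/s$ with $s \in S$ gives $s = rb$. Second, the justification you propose for it would not work as stated: $q : U \to U/G$ is finite but need not be locally free --- Lemma \ref{lem:quot} asserts flatness only when the action is free, and quotients by finite group schemes are genuinely non-flat in general --- so the norm $N_{\cO(U)/\cO(U)^G}$ is not defined. This is exactly why the paper takes the norm along the action morphism $a$ rather than along $q$. (One could repair your version by first shrinking $U$ using generic flatness and checking that this does not change the fraction field of the invariants, but since your Artinian-domain argument never uses this step, the cleanest fix is simply to delete it.)
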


\begin{proof}
We may replace $X$ with $U$, and hence
assume that $X = \Spec(R)$ where $R$
is a finitely generated algebra equipped
with a $G$-action; moreover, $R$ is a 
domain with fraction field $K$.

(i) Given $l \in L$, the set of those
$r \in R$ such that $r l \in R$ is 
a non-zero $G$-stable ideal $I$ of $R$. 
It suffices to show that $I^G \neq 0$.
For this, we use a norm argument from
\cite[\S 12, p.~112]{Mumford}. 
Observe that 
$\cO(G \times X) = \cO(G) \otimes_k R$
is a finite free $R$-module via the
co-action $\alpha^*$. Denote by
$\N : \cO(G) \otimes_k R \to R$
the corresponding norm map. Then
$\N(\alpha^*(r)) \in R^G$ for all 
$r \in R$, see loc.~cit.
If $r \in I$ then 
$\alpha^*(r) \in \cO(G) \otimes_k I$
as $I$ is $G$-stable. Using the
covariance of the norm
(see e.g. \cite[II.6.5.4]{EGA}), it follows
that $\N(\alpha^*(r)) \in I^G$.
If in addition $r \neq 0$,
then $\alpha^*(r) \neq 0$ and hence
$\N(\alpha^*(r)) \neq 0$. This
completes the proof of~(i).

Next, we prove (ii), (iii) and (iv) 
simultaneously.
Note that $R$ is a $G$-module, 
and hence so is the subalgebra
$L R \subset K$ generated by $L$ and $R$. 
Also, $R$ is a finite module over 
$R^G = \cO(X)^G$, 
and hence $L R$ is a finite-dimensional vector 
space over $L$. Since $L R$ is an integral
domain, it follows that it is a field; thus, 
$L R = K$ as the latter is the fraction field of 
$R$. This yields a $G$-algebra structure on $K$ 
extending that on $R$. Also, $K$ is the localization
of $R$ at $L\setminus \{ 0 \}$, and hence the natural 
map $L \otimes_{R^G} R \to K$ is an isomorphism;
equivalently, $\Spec(K)$ is the generic fiber 
of $q : X \to X/G$. It is also the generic fiber 
of the quotient $X \to X/G^0$. Since the latter
is radicial, the extension $K/K^{G^0}$ is purely 
inseparable. Finally, $K^{G^0}/K^G$ is separable
as $X/G^0 \to X/G$ is the quotient by the finite 
\'etale group $\pi_0(G)$. 
\end{proof}

If $G$ is a constant group scheme acting faithfully
on $X$, then $G = \Aut_L(K)$ is uniquely determined 
by the invariant subfield $L \subset K$. This does 
not extend to actions of (say) infinitesimal group
schemes: for example, the $p$th power map of $\bA^1$ 
is the quotient by the actions of $\mu_p$ via 
$t \cdot x = tx$, and of $\alpha_p$ via 
$u \cdot x = x + u$.

\begin{proposition}\label{prop:generic}
Let $X$ be a $G$-variety. Then there exists
a dense open affine $G$-stable subset 
$U \subset X$ such that the graph morphism
\[ G \times U \longrightarrow U\times_{U/G} U,
\quad (g,x) \longmapsto (x, g \cdot x) \] 
is faithfully flat. 
\end{proposition}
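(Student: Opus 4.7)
The plan is to find a dense affine $G$-stable open subset $U \subset X$ where both the quotient $q : U \to V := U/G$ and the stabilizer scheme $\Stab_G|_U \to U$ are finite locally free, and then to factor the graph morphism through the quotient of $G \times U$ by the right-multiplication action of $\Stab_G|_U$.

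By Lemma \ref{lem:affine} I may first assume $X$ is affine, so that $q : X \to Y = X/G$ exists and is finite by Lemma \ref{lem:quot}. Let $s$ denote the minimum value of the fiber length of $\Stab_G \to X$, attained on a nonempty open by upper semicontinuity. Since the fibers of $q$ are the $G$-orbits as sets (Lemma \ref{lem:quot}), the orbit-stabilizer relation identifies the function $x \mapsto |\Stab_G(x)|$ with $|G|$ divided by the $q$-fiber length at $q(x)$. Hence $\{x : |\Stab_G(x)| = s\}$ coincides with $q^{-1}(V)$ for a dense open $V \subset Y$, and is in particular $G$-stable. On $q^{-1}(V)$ both $q$ and $\Stab_G$ have constant fiber length ($|G|/s$ and $s$ respectively), which together with reducedness forces both morphisms to be finite locally free. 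A second application of Lemma \ref{lem:affine} produces the desired affine $G$-stable $U$.

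On $U$, the finite locally free closed subgroup scheme $\Stab_G|_U \subset G \times U$ acts freely on $G \times U$ by right multiplication, so the quotient $Q := (G \times U)/\Stab_G|_U$ exists as a finite locally free $U$-scheme of rank $|G|/s$, and the projection $\pi : G \times U \to Q$ is a $\Stab_G|_U$-torsor, hence faithfully flat. Because $(gh) \cdot u = g \cdot u$ whenever $h \in \Stab_G(u)$, the graph morphism $\gamma(g, u) = (g \cdot u, u)$ is invariant under the right action of $\Stab_G|_U$ and factors as
\[ G \times U \xrightarrow{\pi} Q \xrightarrow{\bar\gamma} U \times_V U. \]
The composition $\gamma = \bar\gamma \circ \pi$ is then faithfully flat as soon as $\bar\gamma$ is an isomorphism.

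The main obstacle is to establish this last assertion. Both $Q$ and $U \times_V U$ are finite locally free over $U$ of the same rank $|G|/s$ (the latter by flat base change from $q|_U$), and on the fiber over $u \in U$ the map $\bar\gamma_u$ sends $g \cdot \Stab_G(u)$ to $g \cdot u \in q^{-1}(q(u)) \otimes_{\kappa(q(u))} \kappa(u)$. Its image is the scheme-theoretic orbit of $u$ inside this $q$-fiber; both have length $|G|/s$ and orbits are contained in $q$-fibers, so the orbit equals the fiber, making $\bar\gamma_u$ an isomorphism. A Nakayama-type argument --- a morphism of finite locally free $U$-schemes of the same rank that is fiberwise an isomorphism is itself an isomorphism --- then globalizes this to the isomorphism $\bar\gamma$, completing the proof.
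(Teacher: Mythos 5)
Your overall strategy---factor the graph morphism through the $\Stab_G|_U$-torsor $G\times U \to Q=(G\times U)/\Stab_G|_U$ and then compare ranks of finite locally free $U$-schemes---is appealing, but it is circular at the decisive step. You assert that $U\times_V U$ is finite locally free over $U$ of rank $|G|/s$; flat base change only gives that this rank equals $\deg(q|_U)=[K:K^G]$, and the equality $[K:K^G]=|G|/s$ is precisely the content of Corollary \ref{cor:degree}, which the paper \emph{deduces from} Proposition \ref{prop:generic}. Without it you only know that the orbit of a point of $U$ (of length $|G|/s$) is a closed subscheme of the corresponding fiber of $q$, i.e.\ $[K:K^G]\ge |G|/s$; the reverse inequality---equivalently the scheme-theoretic surjectivity of the graph morphism, as opposed to the merely set-theoretic statement ``fibers are orbits as sets'' of Lemma \ref{lem:quot}---is the substantive content of the proposition. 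This is exactly where the paper imports a nontrivial geometric input: it embeds $G$ into a smooth connected group $G^{\#}$, forms $X^{\#}=G^{\#}\times^G X$, invokes Rosenlicht's theorem in Springer's form to get dominance of the graph morphism for the smooth group $G^{\#}$, and then descends to $G$ via the fiberwise criterion for flatness. Your argument needs a substitute for that input; as written, it proves only that $\gamma$ is faithfully flat onto a closed subscheme of $U\times_{U/G}U$ of rank $|G|/s$ over $U$, not onto all of it.

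A secondary, fixable slip: the identity ``$|\Stab_G(x)|$ equals $|G|$ divided by the $q$-fiber length at $q(x)$'' is false in general. For $\mu_p$ scaling $\bA^1$, the fiber of $q: x\mapsto x^p$ over $0$ has length $p$ while $\Stab_G(0)=\mu_p$ has order $p$, so your formula would give $1$. To produce $U$ you should instead intersect the open $G$-stable locus where $\Stab_G\to X$ has minimal fiber length with the preimage of the flat locus of $q$ furnished by generic flatness; both ingredients are legitimate and yield the same conclusion. Note also that forming $(G\times U)/\Stab_G|_U$ uses quotients by finite locally free group schemes over the base $U$ (SGA~3, Exp.~V), which goes beyond Lemma \ref{lem:quot} as stated over $k$, though that part of the argument is sound.
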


\begin{proof}
This is known in the setting of
actions of smooth algebraic groups,
as a modern version of a result of 
Rosenlicht (see \cite{Springer}).
We will deduce the desired statement 
from this version, after some first 
reductions.

We may assume that $X$ is affine; then there
exists a quotient morphism $q: X \to Y$. 
Using generic flatness (see e.g.
\cite[${\rm IV}_{\rm 2}$.6.9.1]{EGA}),
we may further assume 
that $q$ is faithfully flat.

Next, we may assume that $G$ is a 
subgroup scheme of a smooth connected 
affine algebraic group $G^{\#}$ 
(for example, $G^{\#} = \GL_n$ in which $G$ 
is embedded via the regular representation). 
Then $G^{\#} \times X$ is an affine variety
equipped with a free action of $G$ via 
$g \cdot (g^{\#},x) = (g^{\#} g^{-1}, g \cdot x)$
and the quotient by this action is an affine 
variety $X^{\#} = G^{\#} \times^G X$
on which $G^{\#}$ acts via its action on 
itself by left multiplication. 
(The formation of $X^{\#}$ is functorial
in $X$ once an embedding $G \to G^{\#}$ 
is fixed).
Note that the open $G^{\#}$-stable subsets of 
$X^{\#}$ are exactly the subsets 
$U^{\#} = G^{\#} \times^G U$,
where $U \subset X$ is open and $G$-stable;
moreover, $U^{\#}$ is affine if and only if
$U$ is affine. 

The projection $G^{\#} \times X \to G^{\#}$ 
induces a morphism 
$\psi: X^{\#} \to G^{\#}/G$,
which is $G^{\#}$-equivariant and has 
fiber $X$ at the base point of $G^{\#}/G$. 
Also, the projection 
$G^{\#} \times X \to X$ induces a morphism 
$q^{\#} : X^{\#} \to Y$ 
which is the categorical quotient by $G^{\#}$; 
we have 
$\cO(Y) = \cO(X)^G 
\stackrel{\sim}{\longrightarrow} 
\cO(X^{\#})^{G^{\#}}$
and 
$k(Y) = k(X)^G 
\stackrel{\sim}{\longrightarrow} 
k(X^{\#})^{G^{\#}}$. As a consequence,
the algebra $\cO(X^{\#})^{G^{\#}}$
is finitely generated and its fraction field
is $k(X^{\#})^{G^{\#}}$ 
(Lemma \ref{lem:field}). Moreover, 
$q^{\#}$ is faithfully flat, since so are
the quotient morphism 
$G^{\#} \times X \to X^{\#}$ and the composite
morphism 
$G^{\#} \times X 
\stackrel{\pr}{\longrightarrow} X
\stackrel{q}{\longrightarrow} Y$
(use \cite[${\rm IV}_{\rm 2}$.2.2.11]{EGA}).

In view of \cite[Satz~1.7]{Springer}, it follows
that the fiber product $X^{\#} \times_Y X^{\#}$ 
is a variety, and the graph morphism
\[ \varphi^{\#} : G^{\#} \times X^{\#} 
\longrightarrow 
X^{\#} \times_Y X^{\#}, \quad 
(g,x) \longmapsto (g \cdot x, x)
\]
is dominant.
Also, $\varphi^{\#}$ is equivariant for the
action of $G^{\#} \times G^{\#}$ on 
$G^{\#} \times X^{\#}$ defined by
$(g_1,g_2) \cdot (g,x) = 
(g_1 g g_2^{-1}, g_2 \cdot x)$, 
and its action on $X^{\#} \times_Y X^{\#}$ 
via $(g_1,g_2) \cdot (x_1,x_2) = 
(g_1 \cdot x_1, g_2 \cdot x_2)$.
So the image of $\varphi^{\#}$ contains 
a dense open subset $V$ of 
$X^{\#} \times_Y X^{\#}$,
stable by $G^{\#} \times G^{\#}$. Thus, 
$\varphi^{\#,-1}(V)$ is a dense open subset
of $G^{\#} \times X^{\#}$, stable by 
$G^{\#} \times G^{\#}$
and hence of the form $G^{\#} \times U^{\#}$
where $U^{\#} \subset X^{\#}$ is open and 
$G^{\#}$-stable.
Replacing $X^{\#}$ with $U^{^\#}$, we may thus 
assume that $\varphi^{\#}$ is surjective. 
Likewise, using generic flatness and 
equivariance, we may further assume that 
$\varphi^{\#}$ is flat. 

The $G^{\#}$-equivariant morphism 
$\psi : X^{\#} \to G^{\#}/G$ yields a 
$G^{\#} \times G^{\#}$-equivariant morphism
\[ \psi^{\#} : X^{\#} \times_Y X^{\#} 
\longrightarrow G^{\#}/G \times G^{\#}/G, \]
which is faithfully flat by equivariance.
The composite morphism
\[ \psi^{\#} \circ \varphi^{\#} : 
G^{\#} \times X^{\#} 
\longrightarrow  G^{\#}/G \times G^{\#}/G \]
is faithfully flat as well, and its fiber
at the base point is $G \times X$. 
Moreover, the restriction of the graph morphism
$\varphi^{\#}$ to this fiber is 
the analogously defined morphism 
$\varphi : G \times X \to X \times_Y X$.
By the fiberwise criterion for flatness
(see \cite[${\rm IV}_{\rm 3}$.11.3.11]{EGA}), 
it follows that $\varphi$ is faithfully flat.
\end{proof}

As a direct consequence of the above proposition,
the $G$-action on $X$ is generically transitive.
Indeed, the generic fiber of the structure morphism 
$U \times_{U/G} U \to U/G$ is
\[ Z = \Spec(K \otimes_L K) = 
\Spec(K) \times_{\Spec(L)} \Spec(K) \]
by Lemma \ref{lem:field}. This is a $K$-scheme
via the first projection, and has a canonical
$K$-point $z$ (the diagonal) corresponding to 
the multiplication $K \otimes_L K \to K$.
Moreover, the $G$-action on $\Spec(K)$ 
yields a $G_K$-action on $Z$, 
and Proposition \ref{prop:generic} implies that
the orbit map $G_K \to Z$, $g \mapsto g \cdot z$ 
is faithfully flat. Also, $H = \Stab_{G_K}(z)$
is the generic fiber of the projection 
$\Stab_G \to X$,
i.e., the generic stabilizer. This yields 
the following:

\begin{corollary}\label{cor:homogeneous}
With the above notation, we have
a $G_K$-equivariant isomorphism
$Z \simeq G_K/H$. 
\end{corollary}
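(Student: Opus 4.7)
The plan is to realize $Z$ directly as the quotient $G_K/H$, by combining the faithful flatness supplied by Proposition \ref{prop:generic} with the torsor construction of Lemma \ref{lem:quot}. First I would record from the discussion preceding the statement that the orbit map $\mu : G_K \to Z$, $g \longmapsto g \cdot z$, is faithfully flat. Next, $H$ is a closed subgroup scheme of the affine $K$-group $G_K$, acting freely on $G_K$ by translation; since $G_K$ is affine, Lemma \ref{lem:quot} applies and produces an $H$-torsor $\pi : G_K \to G_K/H$ together with a canonical isomorphism
\[
G_K \times H \stackrel{\sim}{\longrightarrow} G_K \times_{G_K/H} G_K,
\qquad (g,h) \longmapsto (g, gh).
\]

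Because $H$ stabilizes $z$, one has $\mu(gh) = \mu(g)$ functorially, so $\mu$ factors uniquely as $\mu = \bar\mu \circ \pi$ for a $G_K$-equivariant morphism $\bar\mu : G_K/H \to Z$. I would then exhibit a second canonical isomorphism
\[
\delta : G_K \times H \stackrel{\sim}{\longrightarrow} G_K \times_Z G_K,
\qquad (g,h) \longmapsto (g, gh).
\]
The morphism $\delta$ is well-defined since $(gh) \cdot z = g \cdot (h \cdot z) = g \cdot z$, and its inverse sends a functorial pair $(g_1, g_2)$ with $g_1 \cdot z = g_2 \cdot z$ to $(g_1, g_1^{-1} g_2)$; that $g_1^{-1} g_2$ factors scheme-theoretically through $H$ is precisely the universal property of the stabilizer $H = \Stab_{G_K}(z)$.

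Comparing these two isomorphisms yields an identification $G_K \times_Z G_K \simeq G_K \times_{G_K/H} G_K$ which commutes with both projections to $G_K$. Since $\mu$ is faithfully flat and quasi-compact, it is an effective epimorphism in the fpqc topology, so $Z$ is the coequalizer of its pair of projections; likewise $G_K/H$ is the coequalizer of its pair. The two coequalizer diagrams coincide, and hence $\bar\mu$ is an isomorphism of $G_K$-schemes. The only delicate point is the construction of $\delta$: one has to verify that the stabilizer condition is functorial rather than merely set-theoretic. Granting that, the rest is formal torsor bookkeeping and faithfully flat descent.
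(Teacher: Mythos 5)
Your proof is correct and follows essentially the same route as the paper, which derives the corollary from the faithful flatness of the orbit map $G_K \to Z$ (Proposition \ref{prop:generic}) and the identification $H = \Stab_{G_K}(z)$, leaving the final descent step implicit. You have simply written out that standard step — the comparison of the two fpqc coequalizer diagrams $G_K \times H \rightrightarrows G_K$ arising from the $H$-torsor $G_K \to G_K/H$ and from the faithfully flat orbit map — and your treatment of the functoriality of the stabilizer condition is exactly what is needed.
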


Considering the lengths of the finite $K$-schemes
$Z$ and $G_K/H$, it follows that
$[K : L] = [G_K : H]$. This divides 
$\vert G \vert$, and equality holds 
if and only if $H$ is trivial.
Equivalently, $G_K$ acts freely on $Z$, i.e., 
$G$ acts freely on $\Spec(K)$. We have proved:

\begin{corollary}\label{cor:degree}
With the above notation, $[K : L]$ divides 
$\vert G \vert$. Moreover, equality holds if and 
only if the $G$-action on $X$ is generically free.
\end{corollary}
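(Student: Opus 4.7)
The plan is to read off both statements from the isomorphism $Z \simeq G_K/H$ of Corollary \ref{cor:homogeneous} together with the length computations already set up in the discussion preceding the statement.

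First I would compute the length of $Z = \Spec(K \otimes_L K)$ as a finite $K$-scheme via the first projection: since $K$ is a finite extension of $L$ (Lemma \ref{lem:field}(iv)), we have $\dim_K(K \otimes_L K) = [K:L]$. On the other hand, $G_K/H$ is a finite $K$-scheme whose length is $|G_K|/|H| = |G|/|H|$ (base change preserves the order of a finite group scheme, and the quotient by a finite subgroup scheme has length equal to the quotient of the orders, since $G_K \to G_K/H$ is an $H$-torsor). Comparing lengths through the isomorphism $Z \simeq G_K/H$ gives
\[ [K : L] \cdot |H| = |G|, \]
so in particular $[K:L]$ divides $|G|$, which is the first assertion.

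For the equivalence, equality $[K:L] = |G|$ holds if and only if $|H| = 1$, i.e. $H$ is the trivial group scheme. By definition $H = \Stab_{G_K}(z)$ is the generic fiber of the finite morphism $\Stab_G \to X$ (cf.\ the discussion preceding Corollary \ref{cor:homogeneous}), so $H$ is trivial exactly when the stabilizer at the generic point of $X$ is trivial, i.e.\ when the generic point lies in $X_{\fr}$.

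The remaining step is to identify ``trivial generic stabilizer'' with ``generically free action on $X$''. Since $\Stab_G \to X$ is finite, the locus $X_{\fr}$ where the stabilizer is trivial is open in $X$; if it contains the generic point then it is dense, so the action is generically free, and conversely a generically free action obviously has trivial generic stabilizer. This is the only step that is not purely formal, but it is immediate from the openness of $X_{\fr}$, which was noted in the preliminaries. Combining, equality $[K:L] = |G|$ is equivalent to generic freeness, completing the proof.
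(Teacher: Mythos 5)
Your proposal is correct and follows essentially the same route as the paper: the paper also compares the lengths of the finite $K$-schemes $Z$ and $G_K/H$ to get $[K:L]=[G_K:H]$, notes that this divides $\vert G\vert$ with equality exactly when $H$ is trivial, and identifies $H$ with the generic stabilizer. Your added remark that openness of $X_{\fr}$ converts ``trivial generic stabilizer'' into ``generically free'' is a correct elaboration of a step the paper leaves implicit.
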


\section{Rational actions}
\label{sec:rat}

\begin{definition}\label{def:rat}
Let $G$ be a finite group scheme, and $X$ 
a variety. A \emph{rational action} of 
$G$ on $X$ is a rational map
$\alpha : G \times X \dasharrow X$
which satisfies the following two properties:

\begin{enumerate}
\item[{\rm (i)}]
The rational map
$(\pr_1,\alpha) : 
G \times X \dasharrow G \times X$
is birational.

\item[{\rm (ii)}]
The rational maps
$\alpha \circ (\id_G \times \alpha), 
\alpha \circ (\mu \times \id_X) :
G \times G \times X \dasharrow X$ 
are equal, where $\mu : G \times G \to G$ 
denotes the multiplication.

\end{enumerate}

\end{definition}

In this definition (adapted from 
\cite[\S 3]{Demazure}),
a \emph{rational map} $f: Y \dasharrow Z$ 
is an equivalence class of pairs $(U,\varphi)$, 
where $U$ is a schematically
dense open subset of $Y$, and $\varphi : U \to Z$
a morphism; two pairs $(U,\varphi)$, $(V,\psi)$
are equivalent if there exists a schematically dense
open subset $W \subset U \cap V$ such that
$\varphi \vert_W = \psi \vert_W$. Every rational map
$f : X \dasharrow Y$ has a unique representative
$(U,\varphi)$, where $U$ is maximal; then $U$
is the \emph{domain of definition} $\dom(f)$. 
The formation of the domain of definition
commutes with base change by field extensions
(see \cite[${\rm IV}_{\rm 4}$.20.3.11]{EGA})). 
The rational map 
$f$ is \emph{birational} if it admits a representative
$(U,\varphi)$ such that $\varphi$ is an isomorphism
onto a schematically dense open subset of $Z$.

Since $X$ is reduced, every dense open subset 
$U \subset X$ is schematically dense. Also, 
we may replace $X$ with any dense open subset
in Definition \ref{def:rat}, and hence assume 
that $X$ is smooth. Then $G \times X$ is
Cohen-Macaulay (as $G$ is finite), and hence 
has no embedded component. As a consequence, 
we may replace ``schematically dense'' with 
``dense'' when dealing with the rational maps 
in (i), (ii).

Let $\alpha : G \times X \dasharrow X$ 
be a rational map satisfying (i). 
Then $\alpha$ is dominant, since 
it is the composition of the birational map 
$(\pr_1,\alpha)$ and the second projection. 
Thus, the image of $\alpha$ contains 
a dense open subset $W \subset X$. 
So the image of $\id_G \times \alpha$
contains $G \times W$. Denote by $V$ the
domain of definition of $\alpha$; then
the composite rational map
$\alpha \circ(\id_G \times \alpha)$ 
is defined on the open subset 
$(\id_G \times \alpha)^{-1}(V \cap (G \times W))$. 
Moreover,  the composite rational map 
$\alpha \circ (\mu \times \id_X)$ is defined 
as $\mu \times \id_X$ is a morphism (see 
\cite[${\rm IV}_{\rm 1}$.20.3.1]{EGA}). 
So the two
compositions of rational maps in (ii) make sense.

For any $g \in G$, the intersection 
$V_g = \pr_1^{-1}(g) \cap V_{\kappa(g)}$ 
is identified with a dense open subset of 
$X_{\kappa(g)}$;
moreover, $g$ induces a birational morphism
$\alpha_g : V_g \to X_{\kappa(g)}$ (as follows from 
the condition (i) and the finiteness of $G$).
This motivates (i), while (ii) is a rational
analogue of the associativity property of
actions.

\begin{proposition}\label{prop:reg}
Let $X$ be a variety equipped with a rational 
action $\alpha$ of $G$. Then there exists 
a dense open subset $U \subset X$ such that
$\alpha$ is defined on $G \times U$ and induces
a $G$-action on $U$.
\end{proposition}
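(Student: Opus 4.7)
My plan is to construct $U$ in two stages, exploiting throughout that $\pr_2 : G \times X \to X$ is finite and hence closed: first I locate a dense open $U_0 \subset X$ where $a$ restricts to a morphism, then I iteratively shrink to obtain $a(G \times U) \subset U$.

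\emph{Stage 1.} The key observation is that the generic points of $G \times X$ are exactly the points of the generic fiber $G_{k(X)}$. Indeed, $\pr_2$ is finite flat, so going-down forces minimal primes of (any affine open of) $G \times X$ to contract to $(0) \in \cO(X)$, and conversely any prime of $G \times X$ lying over $(0)$ is minimal in the finite-dimensional fiber $\cO(G) \otimes_k k(X)$. Since $V := \dom(a)$ is schematically dense, $V$ contains all generic points, hence $V \supset G_{k(X)}$; thus $T := \pr_2((G \times X) \setminus V)$ is a proper closed subset of $X$, and $U_0 := X \setminus T$ is dense open with $a$ defined on $G \times U_0$. Moreover, axiom (i) makes $(\pr_1,a)$ induce a bijection between the generic points of source and target, both of which live in the respective generic fibers; so $a$ sends every generic point of $G \times X$ to $\eta_X$, i.e., $a(G_{k(X)}) \subset \{\eta_X\}$.

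\emph{Stage 2.} Define recursively $U_{n+1} := \{x \in U_n : a(G \times \{x\}) \subset U_n\} = U_n \setminus \pr_2\bigl((G \times U_n) \cap a^{-1}(X \setminus U_n)\bigr)$. Finiteness of $\pr_2$ makes the subtracted set closed, so each $U_{n+1}$ is open; stage 1 gives $\eta_X \in U_{n+1}$, so it is also dense. Noetherianity of $X$ makes the chain stabilize at some dense open $U$ satisfying $a(G \times U) \subset U$, which yields the morphism $a\vert_{G \times U} : G \times U \to U$.

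\emph{Action axioms.} On $U$, axiom (ii) becomes an equality of genuine morphisms $G \times G \times U \to U$ agreeing on a dense open; the Cohen--Macaulay remark following Definition \ref{def:rat} promotes this open to schematically dense, so the closed equalizer in the separated target equals the whole source, giving associativity. For the identity, axiom (ii) with $g_1 = g_2 = e$ makes the field endomorphism $a(e,-)^*$ of $k(U)$ idempotent; being an injective $k$-algebra map, it is the identity, whence $a(e,-) = \id_U$. The main obstacle is the interplay between ``dense'' and ``schematically dense'' in the presence of non-reduced $G$; this is precisely what the Cohen--Macaulay observation in the excerpt is designed to resolve.
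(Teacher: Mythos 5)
Your Stage 1 is correct and is a nice scheme-theoretic alternative to the paper's reduction: identifying the generic points of $G \times X$ with the points of the generic fiber $G_{k(X)}$, and using finiteness of $\pr_2$ to cut out $U_0$, is sound. But Stage 2 contains a genuine error: the sets $U_0 \supset U_1 \supset U_2 \supset \cdots$ form a \emph{descending} chain of open subsets, equivalently an \emph{ascending} chain of closed complements, and Noetherianity of $X$ gives no control over such a chain (a Noetherian space satisfies the descending chain condition on closed subsets, not the ascending one; already in $\bA^1$ the closed sets $\{1\} \subset \{1,2\} \subset \cdots$ do not stabilize). So your iteration is not known to terminate, and the dense open $U$ with $a(G \times U) \subset U$ is never actually produced.

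The missing ingredient is precisely the group law: one must use axiom (ii) to show that the shrinking stops after one step, i.e.\ $U_2 = U_1$. Concretely, for $x \in U_1$ and $y$ in the image of the fiber $G_{\kappa(x)}$ under $a$, associativity identifies $a(G_{\kappa(y)})$ with a subset of $a(G_{\kappa(x)}) \subset U_0$, so $y \in U_1$; but to run this argument one first needs the two rational maps in (ii) to agree as \emph{morphisms} on the relevant open subset of $G \times G \times X$ (via the Cohen--Macaulay/schematic-density remark, after shrinking $X$ to be smooth), not merely as rational maps --- and this is exactly the step you deferred to the end. The paper avoids these difficulties by passing to $\bar{k}$, where the underlying space of $G$ is the finite group $G(k)$: it sets $W = \bigcap_g V_g$ and $U = \bigcap_g a_g^{-1}(W)$, proves $g \cdot x \in U$ for $x \in U$ by a short contradiction using $hg \cdot x = h \cdot (g \cdot x)$, deduces $G \times U \subset \dom(a)$ from openness, and then descends to $k$ by Galois descent. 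Either repair (pointwise associativity over $\bar{k}$, or the schematic-density argument to get $U_2 = U_1$) would complete your proof; as written, the termination claim is false.
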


\begin{proof}
We first consider the case where $k$ is
algebraically closed; then we have
$G = G^0 \rtimes G_{\red}$ and $G_{\red}$ 
is the constant group scheme associated with $G(k)$. 
For any $g \in G(k)$ and $x \in X$, we denote
$\alpha(g,x)$ by $g \cdot x$ whenever it is defined,
i.e., $x \in V_g$. By (ii), given $g,h \in G(k)$
and $x \in X$, if $h \cdot x$ and 
$g \cdot (h \cdot x)$
are defined, then $gh \cdot x$ is defined and equals
$g \cdot (h \cdot x)$. It follows easily that 
the birational morphism 
$\alpha_e : V_e \to X$ is just
the inclusion. Also, 
\[ W = \bigcap_{g \in G(k)} V_g \] 
is a dense open subset of $X$, as well as
\[ U = \bigcap_{g\in G(k)} \alpha_g^{-1}(W). \]
We check that $U$ satisfies our assertions.

Since $\alpha_e$ is the inclusion, we have 
$U \subset W$.
Let $g \in G(k)$ and $x \in U$; then 
$g \cdot x$ is defined and lies in $W$. 
We claim that $g \cdot x \in U$. 
Otherwise, there exists
$h \in G(k)$ such that $h \cdot (g \cdot x)$
is defined and does not lie in $W$. Then
$hg \cdot x$ is defined and not in $W$, 
a contradiction. This proves the claim. 

By this claim, we have $G(k) \times U \subset V$.
Since $V$ is open in $G \times X$, it follows
that $G \times U \subset V$. Thus, $\alpha$
yields a morphism 
$\alpha_0 : G \times U \to X$,
which factors through $U$ by the claim again. 
This completes the proof when $k$ is 
algebraically closed. 

Next, we consider an arbitrary field $k$. 
Then $X_{\bar{k}}$ is equipped with a
rational action of $G_{\bar{k}}$. The above
construction yields a dense open subset
$U' \subset X_{\bar{k}}$ on which 
$G_{\bar{k}}$ acts, and which is stable under
all automorphisms of $\bar{k}/k$. Thus,
$U'$ is the preimage of a dense
open subset $U \subset X$ under the projection 
$\pi: X_{\bar{k}} \to X$. One may readily
check that $U$ satisfies our assertions,
by using the fact that the formation
of the domain of definition commutes with
field extensions.
\end{proof}

\begin{corollary}\label{cor:model}
Let $X$ be a variety equipped with 
a rational action of $G$. Then $X$ is
equivariantly birationally isomorphic 
to a projective $G$-variety. 
\end{corollary}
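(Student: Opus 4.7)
The plan is to reduce to a regular action on an affine $G$-variety, embed it equivariantly into a linear representation of $G$, then projectivize and take closure.

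First, by Proposition \ref{prop:reg}, the rational action restricts to a genuine $G$-action on some dense open $U \subset X$. Since the conclusion is about the equivariant birational class of $X$, I may replace $X$ by $U$ and assume from the start that $X$ is a $G$-variety in the sense of the earlier sections. Applying Lemma \ref{lem:affine}, I may shrink $X$ further and assume that $X$ is affine and $G$-stable; the coaction on $X$ is then encoded by a $k$-algebra homomorphism $a^* : \cO(X) \to \cO(X) \otimes_k \cO(G)$.

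Next I construct an equivariant closed immersion into an affine space carrying a linear $G$-action. Since $\cO(G)$ is finite dimensional, for every $f \in \cO(X)$, writing $a^*(f) = \sum_i f_i \otimes \varphi_i$ with the $\varphi_i$ forming a basis of $\cO(G)$, the $f_i$ span a finite-dimensional $G$-stable subspace of $\cO(X)$ containing $f$. Picking finitely many algebra generators of $\cO(X)$ and taking the sum of the $G$-submodules they generate yields a finite-dimensional $G$-submodule $V \subset \cO(X)$ that generates $\cO(X)$ as a $k$-algebra. The surjection $\Sym(V) \twoheadrightarrow \cO(X)$ is $G$-equivariant, and dually gives a $G$-equivariant closed immersion $X \hookrightarrow \bA(V^*) := \Spec \Sym(V)$, where $G$ acts linearly on $\bA(V^*)$ through its action on $V$.

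Now I projectivize. Let $W = V \oplus k$, with $G$ acting trivially on the second factor, and consider $\bP(W^*) = \Proj \Sym(W)$. The complement of the hyperplane cut out by the trivial factor provides a $G$-stable open immersion $\bA(V^*) \hookrightarrow \bP(W^*)$, so composition yields a $G$-equivariant locally closed immersion $X \hookrightarrow \bP(W^*)$. Let $\bar X$ be the scheme-theoretic closure of the image; this is a closed subscheme of the projective $G$-variety $\bP(W^*)$, and I claim it is $G$-stable. Indeed, since $G$ is finite and flat over $k$, the product $G \times X$ is dense in $G \times \bar X$; the composite $G \times X \to X \hookrightarrow \bar X$ therefore extends, by closedness of $\bar X$ inside $G \times \bP(W^*) \to \bP(W^*)$, to a morphism $G \times \bar X \to \bar X$ satisfying the action axioms (these are closed conditions that hold on the dense subscheme $G \times G \times X$). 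Since $X$ is integral, so is $\bar X$, which is therefore a projective $G$-variety equivariantly birationally isomorphic to $X$.

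The only step that requires a little care is the $G$-equivariance of the closure $\bar X$: one needs that $G \times X$ be schematically dense in $G \times \bar X$ for the action morphism to descend, and this is precisely where the flatness of $G/k$ is used. All other steps are routine consequences of finite-dimensional representation theory for finite group schemes together with the previously established reductions.
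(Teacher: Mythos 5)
Your proposal is correct and follows essentially the same route as the paper: reduce via Proposition \ref{prop:reg} and Lemma \ref{lem:affine} to an affine $G$-variety, embed equivariantly into $\Spec\,\Sym(V)$ for a finite-dimensional generating $G$-submodule $V \subset \cO(X)$, pass to the projective completion $\bP(V \oplus k)$, and take the schematic closure. The only difference is that you spell out the standard details (existence of $V$ via the coaction, and $G$-stability of the closure via flatness of $G$ over $k$) that the paper leaves implicit.
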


\begin{proof}
Using Proposition \ref{prop:reg}, we may assume
that $X$ is a $G$-variety. In view of Lemma 
\ref{lem:affine}, we may further assume 
that $X$ is affine. Then the algebra 
$\cO(X)$ is generated by a finite-dimensional 
$G$-submodule $V$. This yields a closed 
$G$-equivariant immersion of $X$ in the 
corresponding
affine space $\bV(V) = \Spec (\Sym(V))$, 
and hence in its projective completion 
$\bP(V \oplus k)$ (where $G$ acts via 
its linear representation in $V \oplus k$). 
The schematic image of $X$ in $\bP(V \oplus k)$ 
is the desired projective $G$-variety.
\end{proof}

\begin{corollary}\label{cor:generic}
Every rational action of $G$ on $X$ restricts 
to a $G$-action on the spectrum of the
function field $K = k(X)$. 
Conversely, every $G$-action on $\Spec(K)$ 
extends to a unique rational $G$-action on $X$.
\end{corollary}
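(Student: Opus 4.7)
My plan is to handle the two directions separately, reducing both to statements about the generic point $\Spec(K)$.

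For the forward direction, Proposition~\ref{prop:reg} converts the rational action into a genuine $G$-action $a_U \colon G \times U \to U$ on some dense open $U \subseteq X$. Base-changing $a_U$ along the inclusion $\Spec(K) \hookrightarrow U$, using $\pr_2 \colon G \times U \to U$ as the structure map, yields a morphism $\alpha \colon G_K \to \Spec(K)$; the action axioms for $a_U$ transfer to $\alpha$ by functoriality of base change.

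For the existence part of the converse, given a $G$-action on $\Spec(K)$ with coaction $\alpha^* \colon K \to \cO(G) \otimes_k K$, I would choose an affine open $\Spec(R) \subseteq X$ with $R$ a finitely generated $k$-subalgebra of $K$ having $K$ as its fraction field. A finite generating set of $R$ has images in $\cO(G) \otimes_k K$ with only finitely many denominators, so there exists $f \in R \setminus \{0\}$ with $\alpha^*(R) \subseteq \cO(G) \otimes_k R[1/f]$. This gives a morphism $G \times V \to X$ where $V = \Spec(R[1/f])$, and hence a rational map $a \colon G \times X \dashrightarrow X$. Axiom (i) of Definition~\ref{def:rat} follows because $(\pr_1, \alpha) \colon G_K \to G_K$ is an isomorphism (encoded in the action data), so $(\pr_1, a)$ is birational. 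Axiom (ii) amounts to equality of two rational maps $G \times G \times X \dashrightarrow X$ that coincide on the generic fiber over $X$ by coassociativity of $\alpha^*$; after restricting $X$ to its smooth locus, $G \times G \times X$ is Cohen--Macaulay, hence has no embedded primes, so this agreement propagates to equality of rational maps.

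For uniqueness, two rational actions extending $\alpha$ agree as morphisms $G_K \to X$, and by the same no-embedded-primes argument this forces agreement on any common domain of definition. The main technical obstacle I anticipate is precisely this transfer from equality on the non-open subscheme $G_K$ to equality of rational maps; it rests on the absence of embedded primes in $G \times X$, which we secure by passing to the smooth locus of $X$ together with the finite flatness of $G$ over $k$.
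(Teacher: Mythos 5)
Your converse direction matches the paper's proof almost step for step: spread the coaction $\alpha^* \colon K \to \cO(G) \otimes_k K$ out to a morphism $G \times V \to U$ between dense affine opens, then deduce axioms (i), (ii) and uniqueness from the fact that the generic fibre $G_K$ contains all associated points of $G \times X$. The paper packages this last step as the ``local determination of morphisms'' (EGA, I.6.5), after the same reduction to $X$ smooth (hence $G \times X$ Cohen--Macaulay without embedded components) that you invoke; the content is identical.

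The one point to tighten is the forward direction. As written, ``base-changing $a_U$ along $\Spec(K) \hookrightarrow U$ with $\pr_2$ as structure map'' does not parse: $a_U \colon G \times U \to U$ is not a morphism of $U$-schemes for those structure maps (that would force $a_U = \pr_2$), so there is no base change to perform. What you actually need is that $a_U$ carries the generic fibre $\pr_2^{-1}(\eta) = G_K$ into $\{\eta\}$ and induces a $K$-algebra coaction on $K$. This is true --- for instance because $(\pr_1, a_U)$ is an automorphism of $G \times U$ and therefore permutes its maximal points, which are exactly the points of $G_K$ --- but it is a statement requiring an argument, not a formal consequence of base change. The paper supplies precisely this step as Lemma \ref{lem:field}(iii), proved via the norm argument showing $K = LR$ and that $\Spec(K)$ is the generic fibre of the quotient; its proof of the first assertion of the corollary consists of quoting that lemma together with Proposition \ref{prop:reg}. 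With that substitution your argument is complete and coincides with the paper's.
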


\begin{proof}
The first assertion is a direct consequence
of Lemma \ref{lem:field} together with Proposition 
\ref{prop:reg}.

For the converse assertion, consider an action
$\alpha : G \times \Spec(K) \to \Spec(K)$
and the corresponding co-action
\[ 
\alpha^* : K \longrightarrow \cO(G) \otimes K, 
\quad 
x_i \longmapsto \sum_j y_{ij} \otimes z_{ij}
\quad (i = 1, \ldots,m),
\]
where $x_1,\ldots, x_m$ generate the field $K$
over $k$. Let $U$ (resp.~$V$) be a dense open 
affine subset of $X$ on which the $x_i$ 
(resp.~$z_{ij}$) are defined; then $\alpha^*$ 
yields a homomorphism of algebras 
$\cO(U) \to \cO(G) \otimes \cO(V)$, or
equivalently,
a morphism $G \times V \to U$. As $U$ is 
dense in $X$, and $G \times V$ is 
(schematically) dense in $G \times X$ (e.g. 
by \cite[${\rm IV}_{\rm 4}$.20.3.5]{EGA}), 
we get a rational map
$\beta : G \times X \dasharrow X$. 
It satisfies the properties (i) and (ii) in view of 
the ``local determination of morphisms'' (see 
\cite[I.6.5]{EGA}), since these properties hold for 
the action of $G$ on $\Spec(K)$.
\end{proof}

\begin{remark}\label{rem:weil}
More generally, Corollary \ref{cor:model} 
holds in the setting of rational actions 
of algebraic groups, by a refinement 
of Weil's regularization theorem 
(see \cite{Weil}, \cite[Thm.~1]{Rosenlicht} 
for the original version, and 
\cite[Thm.~8]{Br22} for the refinement). 
But Proposition \ref{prop:reg} and 
Corollary \ref{cor:generic} do not extend 
to this setting. 
\end{remark}

We now assume that $p > 0$, and use Corollary
\ref{cor:generic} to construct examples of 
faithful rational actions of infinitesimal
group schemes on any variety $X$.
Recall that the function field $K = k(X)$ 
(a separable, finitely generated extension
of $k$ of transcendence degree $n$)
admits a \emph{$p$-basis of length $n$}, 
i.e., a sequence $(x_1,\ldots,x_n) \in K^n$ 
such that the monomials 
$x_1^{m_1} \ldots x_n^{m_n}$,
where $0 \leq m_1, \ldots, m_n \leq p-1$, form
a basis of $K$ over its subfield $k K^p$
(the composite of $k$ and $K^p$ in $K$; this is 
a function field in $n$ variables as well).
Equivalently, the differentials 
$dx_1,\ldots,dx_n$ form a basis of the
$K$-vector space of K\"ahler differentials
$\Omega^1_{K/k} = \Omega^1_{K/kK^p}$
(see \cite[${\rm IV}_{\rm 1}$.21.4.2, 
${\rm IV}_{\rm 1}$.21.4.5]{EGA}).
We denote the dual basis of the $K$-vector 
space of derivations by 
$D_1,\ldots,D_n \in \Der_k(K) = \Der_{kK^p}(K)$. 
Then the $D_i$ commute pairwise
and satisfy $D_i^p = 0$ for $i = 1, \ldots, n$.

Next, recall the equivalence of categories
between infinitesimal group schemes $G$ 
of height at most $1$ and $p$-Lie algebras 
$\fg = \Lie(G)$ (see \cite[II.7.4.1]{DG}.
Under this equivalence, the $G$-actions
on $\Spec(K)$ correspond to the homomorphisms
of $p$-Lie algebras $\fg \to \Der_k(K)$
in view of \cite[II.7.3.10]{DG}. Also,
recall that the $p$-Lie algebra $k$ with 
trivial $p$th power map (resp.~$p$th power
map $t \mapsto t^p$) corresponds to the
group scheme $\alpha_p$ (resp.~$\mu_p$).

In particular, for any 
$f_1, \ldots,f_m \in kK^p$, the derivations 
$f_1 D_1, \ldots, f_m D_1$ commute pairwise and
satisfy $(f_i D_1)^p = 0$ for all $i$. Choosing
$f_1,\ldots,f_m$ linearly independent over $k$,
this yields:

\begin{lemma}\label{lem:alpha}
Every variety of positive dimension admits
a faithful rational action of $\alpha_p^m$
for any $m \geq 1$.
\end{lemma}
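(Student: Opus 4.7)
The plan is to exploit the infrastructure set up just before the lemma. By Corollary~\ref{cor:generic}, producing a faithful rational action of $\alpha_p^m$ on $X$ is the same as producing a faithful $\alpha_p^m$-action on $\Spec(K)$, where $K = k(X)$. Since $\alpha_p^m$ has height at most $1$ with $p$-Lie algebra $k^m$ (with trivial $p$th power map), by the equivalence recalled from \cite[II.7.3.10, II.7.4.1]{DG} this amounts to exhibiting an injective homomorphism of $p$-Lie algebras $\phi : k^m \to \Der_k(K)$.

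Concretely, I would choose a $p$-basis $(x_1,\dots,x_n)$ of $K$ with dual derivations $D_1,\dots,D_n$, pick elements $f_1,\dots,f_m \in k K^p$ that are linearly independent over $k$, and define $\phi$ by sending the standard basis vector $e_i$ to $f_i D_1$. The comment immediately preceding the lemma says these derivations commute pairwise and satisfy $(f_i D_1)^p = 0$, which is exactly the relations required for $\phi$ to be a morphism of $p$-Lie algebras with source $k^m$ (trivial bracket, trivial $p$th power). Injectivity of $\phi$ is equally direct: if $\sum_i c_i f_i D_1 = 0$ in $\Der_k(K)$, then evaluating at $x_1$ gives $\sum_i c_i f_i = 0$ in $K$, forcing all $c_i = 0$ by the linear independence of the $f_i$.

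Two small points of bookkeeping remain. First, such an independent family $(f_i)$ exists because $k K^p$ is a function field in $n \geq 1$ variables (by the discussion recalled before the lemma), hence infinite-dimensional over $k$. Second, I should argue that injectivity of $\phi$ upgrades to faithfulness of the corresponding $\alpha_p^m$-action: any nontrivial subgroup scheme $H \subset \alpha_p^m$ has a nonzero $p$-Lie algebra $\fh \subset k^m$, on which $\phi$ is still injective, so the induced $H$-action on $\Spec(K)$ corresponds to the nonzero homomorphism $\phi|_{\fh}$ and is therefore nontrivial. By Corollary~\ref{cor:generic}, the $\alpha_p^m$-action on $\Spec(K)$ extends uniquely to a rational $\alpha_p^m$-action on $X$, and faithfulness is inherited since the generic point detects whether a subgroup scheme acts trivially on a dense open subset.

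There is essentially no obstacle: all nontrivial content has been packaged into the statements about $p$-bases, the commutation/$p$th power relations for $f_i D_1$, and the equivalence of categories for height $1$ group schemes. The only thing to be slightly careful about is the final faithfulness step, where one must verify that a subgroup scheme of $\alpha_p^m$ acts nontrivially on $\Spec(K)$ as soon as its Lie algebra maps nontrivially into $\Der_k(K)$, which is immediate from the correspondence between $H$-actions on $\Spec(K)$ and $p$-Lie algebra maps $\Lie(H) \to \Der_k(K)$.
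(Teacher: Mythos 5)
Your proposal is correct and follows exactly the route the paper takes: the lemma is stated as an immediate consequence of the preceding construction of the commuting $p$-nilpotent derivations $f_1D_1,\dots,f_mD_1$ with $f_i \in kK^p$ linearly independent over $k$, combined with the height-$1$ equivalence of categories and Corollary~\ref{cor:generic}. Your added bookkeeping (injectivity of the $p$-Lie algebra map, and the passage from injectivity to faithfulness via subgroup schemes of $\alpha_p^m$ having nonzero Lie algebra) just makes explicit what the paper leaves implicit.
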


Here $\alpha_p^m = 
\alpha_p \times \ldots \times \alpha_p$ 
($m$ factors).
Likewise, considering the derivations
$x_1 D_1, \ldots, x_n D_n$, we obtain
a faithful action of 
$\mu_p^n$ on $\Spec(K)$, or equivalently,
a faithful $\mu_p^n$-action on 
$K$ by algebra automorphisms (see e.g.
\cite[II.2.1.2]{DG}). The latter action
fixes $kK^p$ pointwise, and satisfies 
\[ (t_1,\ldots,t_n) \cdot (x_1,\ldots,x_n) 
= (t_1 x_1, \ldots, t_n x_n) \]
for all $(t_1,\ldots,t_n) \in \mu_p^n$.
We say that this action is \emph{standard in
the $p$-basis $(x_1,\ldots,x_n)$}.

\begin{lemma}\label{lem:pbasis}
Let $X$ be a variety of dimension $n$.
Then $X$ admits a faithful rational action 
of $\mu_p^n$. Every such action $\alpha$ is 
generically free, and standard in some $p$-basis 
$(x_1,\ldots,x_n)$ of $K = k(X)$. Moreover, 
the $x_i$ are uniquely determined by $\alpha$, 
up to multiplication by non-zero elements 
of $kK^p$. If $\alpha$ 
extends to a faithful rational action 
of an infinitesimal group scheme $H$ of height
$1$ normalizing $\mu_p^n$, then $H = \mu_p^n$. 
\end{lemma}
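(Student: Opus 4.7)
My plan is to translate the $\mu_p^n$-action into the combinatorics of an $\mathbb{F}_p^n$-grading of $K$, read off the $p$-basis and generic freeness from that, and handle the final assertion via a centralization argument combined with finite-dimensionality of $\Lie(H)$. Existence is already supplied by the construction preceding the lemma (take $D_i = x_i \partial_i$ for any $p$-basis $(x_1, \ldots, x_n)$).

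A faithful rational action of $\mu_p^n$ on $X$ restricts, via Corollary~\ref{cor:generic}, to a faithful $\mu_p^n$-action on $\Spec(K)$, equivalently to a grading $K = \bigoplus_{\lambda \in \mathbb{F}_p^n} K_\lambda$ by the character group of $\mu_p^n$. Every nonzero homogeneous element of $K$ is invertible, so the support $S = \{\lambda : K_\lambda \neq 0\}$ is a subgroup of $\mathbb{F}_p^n$, and faithfulness forces $S$ to generate, hence $S = \mathbb{F}_p^n$. Picking a nonzero $y_i \in K_{e_i}$ for each standard basis vector $e_i$, the monomials $y_1^{a_1}\cdots y_n^{a_n}$ with $0 \leq a_i \leq p-1$ span each $K_\lambda$ over $K_0$ and are $K_0$-linearly independent (they sit in distinct graded pieces), giving $[K:K_0] = p^n$. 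Since $K_0 \supset kK^p$ and $[K:kK^p] = p^n$, the chain forces $K_0 = kK^p$, identifying $(y_1, \ldots, y_n)$ as a $p$-basis. The action is standard in this basis by the very definition of the grading, and Corollary~\ref{cor:degree} then gives generic freeness. For uniqueness, any other $p$-basis $(y_i')$ realizing the standard action must satisfy $y_i' \in K_{e_i} = kK^p \cdot y_i$, so $y_i' = f_i y_i$ with $f_i \in (kK^p)^\times$.

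For the final assertion, suppose $\mu_p^n \subset H$ with $H$ infinitesimal of height $1$, acting faithfully and normalizing $\mu_p^n$. The group scheme $\underline{\Aut}(\mu_p^n)$ is the constant group $\GL_n(\mathbb{F}_p)$, hence \'etale; since $H$ is infinitesimal and therefore connected, the induced conjugation morphism $H \to \underline{\Aut}(\mu_p^n)$ is trivial, so $H$ centralizes $\mu_p^n$. Consequently every $D \in \Lie(H) \subset \Der_k(K)$ commutes with each $D_i = y_i\, \partial/\partial y_i$ and therefore preserves the $\mathbb{F}_p^n$-grading of $K$. Since any $D \in \Der_k(K)$ vanishes on $kK^p = K_0$, its restriction to $K_\lambda$ is $K_0$-linear, hence equals multiplication by some $c_\lambda \in K_0$; the Leibniz rule on monomials then forces $c_\lambda = \sum_i \lambda_i c_i$ with $c_i := c_{e_i}$, so $D = \sum_i c_i D_i$ with $c_i \in kK^p$. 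A direct calculation using $D_i(c_j) = 0$ and $D_i^p = D_i$ yields $D^p = \sum_i c_i^p D_i$.

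The main obstacle is then showing each $c_i \in k$. Because $\Lie(H)$ is a finite-dimensional $k$-vector space closed under $p$-th powers, the iterates $D, D^p, D^{p^2}, \ldots$ are $k$-linearly dependent, producing a nonzero relation $\sum_{j=0}^N \alpha_j D^{p^j} = 0$ with $\alpha_j \in k$. Comparing coefficients on the $K$-basis $(D_1, \ldots, D_n)$ of $\Der_{kK^p}(K)$, each $c_i$ is a root of the nonzero polynomial $P(t) = \sum_j \alpha_j t^{p^j} \in k[t]$ and is thus algebraic over $k$. Since $X$ is geometrically integral, $k$ is algebraically closed in $K$, hence in $kK^p$, so each $c_i \in k$. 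Therefore $D \in \Lie(\mu_p^n)$, giving $\Lie(H) = \Lie(\mu_p^n)$ and $H = \mu_p^n$.
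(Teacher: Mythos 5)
Your proof is correct and follows essentially the same route as the paper's: the character grading of $K$ with support a subgroup forced to be all of $(\bZ/p\bZ)^n$ by faithfulness, the dimension count $[K:kK^p]=p^n$ identifying $K_0=kK^p$ and producing the $p$-basis, constancy of the automorphism group scheme of $\mu_p^n$ forcing $H$ to centralize it, and the $p$-th power iteration plus finite-dimensionality of $\Lie(H)$ forcing the coefficients $c_i$ to be algebraic over $k$, hence in $k$. The only (harmless) cosmetic differences are that you get one-dimensionality of the graded pieces from invertibility of homogeneous elements rather than a pure dimension count, and you describe the centralizer of $\fg$ via grading-preservation and Leibniz rather than the explicit eigenvector decomposition of $\Der_k(K)$.
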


\begin{proof}
Let $G = \mu_p^n$. The existence of a faithful 
rational $G$-action on $X$ follows from 
Corollary \ref{cor:generic} together with
the preceding construction. 

Given such an action $\alpha$, 
the corresponding $G$-action on $K$ 
yields a grading
\[ 
K = \bigoplus_{\gamma \in \Gamma} K_{\gamma}, 
\]
where $\Gamma = (\bZ/p\bZ)^n$ is the character 
group of the diagonalizable group $G$
(see \cite[II.2.2.5]{DG}). Then 
$K_0 = K^G$ is a subfield of $K$,
and each $K_{\gamma}$ is a $K_0$-vector space.
Moreover, the group $\Gamma$ is generated by 
the $\gamma$ such that $K_{\gamma} \neq 0$,
since the $G$-action is faithful. But these
$\gamma$ form a subgroup of $\Gamma$ as $K$
is a field. It follows that $K_{\gamma} \neq 0$
for all $\gamma \in \Gamma$. Since 
$K_0 \supset kK^p$ and 
$[K:kK^p] = p^n = \vert \Gamma \vert$, 
we must have $K_0= kK^p$ and 
$\dim_{K_0}(K_{\gamma}) = 1$ for all such
$\gamma$. Choosing $x_i \in K_{\gamma_i}$
where $\gamma_1,\ldots,\gamma_n$ form the
standard basis of $(\bZ/p\bZ)^n$, we see that 
the monomials $x_1^{m_1} \ldots x_n^{m_n}$,
where $0 \leq m_1, \ldots, m_n \leq p-1$,
have pairwise distinct weights, and
hence are linearly independent over $kK^p$. 
For dimension reasons, these monomials form 
a $p$-basis. By construction, the action
$\alpha$ is standard in this basis, 
which is unique 
up to non-zero elements of $kK^p$; moreover,
the generic stabilizer is trivial.

Let $H$ be an infinitesimal group scheme 
of height $1$ normalizing $G$, and equipped 
with a faithful rational action on $X$ 
extending $\alpha$. Since the automorphism group 
scheme of $G$ is constant (see 
\cite[III.5.3.3]{DG}), we see that
$H$ centralizes $G$. 
Then $\fg \subset \fh \subset \Der_k(K)$,
where $\fh = \Lie(H)$ centralizes $\fg$.
But we have
\[ \Der_k(K) = \bigoplus_{i=1}^n K D_i 
= \bigoplus kK^p x_1^{m_1} \cdots x_n^{m_n} D_i, 
\]
the latter sum being over $i = 1, \ldots, n$
and $m_1,\ldots,m_n = 0, \ldots, p-1$. Also,
each $x_1^{m_1} \cdots x_n^{m_n} D_i$ is a 
$G$-eigenvector of weight 
$(m_1, \ldots, m_i - 1, \ldots, m_n)$ 
(viewed in $(\bZ/p\bZ)^n$). It follows that 
the centralizer
of $\fg$ in $\Der_k(K)$ is the Lie algebra
\[ k K^p \fg = \big\{ \sum_{i=1}^n t_i x_i D_i
~\vert~ t_1, \ldots, t_n \in k K^p \big\}. \] 
So $\fh$ is a finite-dimensional subspace 
of the $k$-vector space $k K^p \fg$, 
stable under the $p$th power map. If 
$\sum_i t_i x_i D_i \in \fh$, then 
$\sum_i t_i^p x_i D_i, \sum_i t_i^{p^2} x_i D_i,
\ldots \in \fh$. It follows that
$t_i,t_i^p, t_i^{p^2}, \ldots$ are linearly
dependent over $k$ for $i = 1, \ldots, n$. 
In particular, each $t_i$ is algebraic over $k$. 
Since $k$ is algebraically closed in $K$, 
this forces $t_1,\ldots,t_n \in k$
and $\fh = \fg$.
\end{proof}

\begin{remark}\label{rem:infty}
In particular, $X$ admits (many)
faithful rational actions of $\mu_p^n$,
but no faithful rational action of
$\mu_p^{n+1}$. The latter fact also 
follows from a classical result in the
theory of $p$-Lie algebras: choosing
a $p$-basis $(x_1,\ldots,x_n)$ of $K/k$
yields an isomorphism of $kK^p$-algebras
\[ K \simeq 
kK^p[T_1,\ldots,T_n]/
(T_1^p - x_1^p,\ldots,T_n^p - x_n^p) \]
and hence an isomorphism of $K$-algebras
\[ K \otimes_{kK^p} K \simeq 
K[T_1,\ldots,T_n]/(T_1^p,\ldots,T_n^p). \]
As a consequence, the $kK^p$-algebra
$\Der_k(K) = \Der_{kK^p}(K)$ is a form of
the $K$-algebra
$\Der_K(K[T_1,\ldots,T_n]/
(T_1^p,\ldots,T_n^p))$.
The latter is a $p$-Lie algebra over $K$,
known as the split Jacobson--Witt algebra
$W_n$. Its maximal tori (i.e., the 
maximal $p$-Lie subalgebras having a 
basis $D_1,\ldots,D_m$ such that the $D_i$
commute pairwise and satisfy $D_i^p = D_i$)
have been determined in \cite{Demushkin};
in particular, they all have dimension~$n$. 

In another direction, the above construction 
of faithful rational actions of $\mu_p^n$
via $p$-bases can be iterated
to yield faithful rational actions of
$\mu_{p^s}^n$ for any $s \geq 1$.
Indeed, taking $p$th powers in the equality
\[ K = 
\bigoplus_{0 \leq m_1,\ldots, m_n \leq p-1}
k K^p x_1^{m_1} \cdots x_n^{m_n}, \]
we obtain 
\[ k K^p = 
\bigoplus_{0 \leq m_1,\ldots, m_n \leq p-1}
k K^{p^2} x_1^{p m_1} \cdots x_n^{pm_n}, \]
and hence 
\[ K = 
\bigoplus_{0 \leq m_1,\ldots, m_n \leq p^2-1}
k K^{p^2} x_1^{m_1} \cdots x_n^{m_n}. \]
By induction, this yields
\[ K = 
\bigoplus_{0 \leq m_1,\ldots, m_n \leq p^s-1}
k K^{p^s} x_1^{m_1} \cdots x_n^{m_n} \]
for any $s \geq 1$. We may thus define the
desired action of $\mu_{p^s}^n$ by the same 
formula as the standard $\mu_p^n$-action;
its fixed subfield is $k K^{p^s}$.

Since these $\mu_{p^s}^n$-actions are compatible 
with the standard embeddings
$\mu_{p^s}^n \to \mu_{p^{s+1}}^n$, 
where $s \geq 1$,
we get a faithful rational action 
of the ind-group scheme
$\mu_{p^{\infty}}^n = \varinjlim_s \mu_{p^s}^n$
on $X$.
Note that the family $(\mu_{p^s}, s \geq 1)$  
is schematically dense in the multiplicative 
group $\bG_m$, but $X$ may admit no faithful 
rational $\bG_m$-action (for example 
if $X$ is not geometrically uniruled).

Likewise, $X$ admits a faithful rational action
of the ind-group scheme 
$\alpha_{p^{\infty}}^r = 
\varinjlim_s \alpha_{p^s}^r$ for any $r \geq 1$. 
Also, the $\alpha_{p^s}$ form a schematically 
dense family in the additive group $\bG_a$, 
but $X$ may admit no faithful rational 
$\bG_a$-action. 
\end{remark}

\section{$G$-normality}
\label{sec:Gnor}

Recall that $G$ denotes a finite group scheme.

\begin{definition}\label{def:Gnormal}
A $G$-variety $X$ is $G$-\emph{normal} 
if every finite birational morphism 
of $G$-varieties $f: Y \to X$ is
an isomorphism.
\end{definition}

\begin{proposition}\label{prop:Gnormal}
Let $X$ be a $G$-variety.

\begin{enumerate}

\item[{\rm (i)}]
There exists a finite birational morphism of 
$G$-varieties $\varphi : X' \to X$, 
where $X'$ is $G$-normal.

\item[{\rm (ii)}]
For any morphism $\varphi$ as in {\rm (i)} 
and any finite birational morphism 
of $G$-varieties $f : Z \to X$, 
there exists a unique morphism of $G$-varieties 
$\psi : X' \to Z$ such that 
$\varphi = f \circ \psi$.

\end{enumerate}

\end{proposition}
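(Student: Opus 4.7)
The plan is to construct $X'$ locally as the maximal $G$-stable subring of the normalization, then glue using the universal property in (ii).

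First I would verify that $X$ admits a covering by $G$-stable affine open subsets. By Lemma~\ref{lem:quot} (applied with $G^0$, whose orbits are fat points and hence lie in any affine containing them), the quotient $q_0 : X \to X/G^0$ exists and is finite. On $X/G^0$ the induced action is through the étale group $\pi_0(G)$, so every point has a $\pi_0(G)$-stable affine neighborhood (after Galois descent, by intersecting finitely many conjugates of an affine neighborhood of its orbit). Pulling back along $q_0$ yields $G$-stable affine neighborhoods of every point of $X$.

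Next I would carry out the local construction. Assume $X = \Spec(A)$ is $G$-stable affine, corresponding to a coaction $\rho_A : A \to A \otimes \cO(G)$, and let $K = k(X)$. By Lemma~\ref{lem:field}, $\rho_A$ extends uniquely to a coaction $\rho_K : K \to K \otimes \cO(G)$. Let $\tilde A \subset K$ be the normalization of $A$; this is finite over $A$ since finitely generated algebras over a field are Nagata. Consider the set
\[ \cS = \{ B \mid A \subseteq B \subseteq \tilde A, \ \rho_K(B) \subseteq B \otimes \cO(G) \}. \]
Because $\rho_K$ is multiplicative, the subring $B_1 B_2 \subseteq \tilde A$ generated by two elements of $\cS$ again lies in $\cS$, so $\cS$ is directed. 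Since $\tilde A$ is Noetherian as an $A$-module, $\cS$ admits a unique maximum $B'$. Set $X' = \Spec(B')$; it is finite birational over $X$, carries a $G$-action via the restriction of $\rho_K$, and is $G$-normal: any finite birational $G$-morphism $Z \to X'$ corresponds to a $G$-stable $C$ with $B' \subseteq C \subseteq \tilde B' = \tilde A$, and maximality forces $C = B'$.

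For (ii) I would argue locally and then glue. If $f : Z \to X$ is finite birational $G$-equivariant, then locally it corresponds to a $G$-stable subring $A \subseteq C \subseteq \tilde A$. The subring $B' C \subseteq \tilde A$ lies in $\cS$ and contains $B'$, so by maximality $B' C = B'$, i.e.\ $C \subseteq B'$; this gives the factorization $\psi : X' \to Z$. Uniqueness of $\psi$ is immediate: two factorizations must agree on the dense open where $\varphi$ and $f$ are isomorphisms, hence everywhere since $X'$ is reduced and $Z$ is separated. This uniqueness shows that the locally constructed $X'$'s glue canonically on overlaps, producing the desired global $\varphi : X' \to X$ and proving (ii) in the global setting.

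The main obstacle I anticipate is not the commutative-algebra core of the argument, which is clean once $\tilde A$ is finite over $A$ and $\rho_K$ is in hand, but rather the preliminary step of producing a $G$-stable affine cover: without it the sheaf-theoretic construction cannot be globalized. After this obstacle is removed, the rest consists of verifying that the inclusion $\rho_K(B_1 B_2) \subseteq B_1 B_2 \otimes \cO(G)$ really does make $\cS$ closed under joins, and that the universal property forces coherent gluing.
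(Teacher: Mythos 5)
Your commutative-algebra core is sound and is essentially the ``algebraic construction'' that the paper records separately in Remark \ref{rem:Gintegral} (the maximal $G$-stable subring $B'$ of $\tilde A$ is exactly the ring of $G$-integral elements there). But your reduction to this local picture has a genuine gap: a $G$-variety need \emph{not} admit a covering by $G$-stable affine open subsets, and the step where you produce one fails. Passing to $X/G^0$ is fine (infinitesimal orbits are fat points), but on $X/G^0$ you then assert that every point has an affine neighborhood of its $\pi_0(G)$-orbit, which you intersect with its conjugates. For a non-quasi-projective variety an orbit of a finite \'etale group need not lie in any affine open: this is precisely Hironaka's example, cited in Section \ref{sec:prel}, of a $\bZ/2\bZ$-action on a smooth proper threefold with an orbit contained in no affine open (whence no categorical quotient). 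For such an $X$ your construction cannot even get started, yet the Proposition is asserted, and proved in the paper, for arbitrary $G$-varieties.

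The paper's proof sidesteps affine covers entirely by a sheaf-theoretic maximality argument on $X$ itself: every finite birational $G$-morphism $f : Y \to X$ satisfies $\cO_X \subset f_*(\cO_Y) \subset (\eta_X)_*(\cO_{\tilde X})$, where $\eta_X$ is the ordinary normalization; since $(\eta_X)_*(\cO_{\tilde X})$ is coherent, one may choose $f$ with $f_*(\cO_Y)$ maximal among such subsheaves, and maximality makes $Y$ $G$-normal. For (ii) it uses a graph argument (schematic closure of a common dense open $U$ in $Z \times_X X'$) rather than a local ring-containment plus gluing; this again avoids any appeal to $G$-stable affine charts, and also disposes of the gluing compatibilities you leave unverified (that the formation of $B'$ commutes with restriction to $G$-stable opens of overlaps). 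If you restrict attention to quasi-projective $X$, or to infinitesimal $G$, your argument can be completed; as written it does not prove the stated result.
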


\begin{proof}
(i) Let $f : Y \to X$ be a finite birational
morphism of $G$-varieties.
Then the normalization morphism
$\eta = \eta_X : \tilde{X} \to X$ 
factors uniquely through 
the analogous morphism 
$\eta_Y : \tilde{X} \to Y$.
Thus, 
$\cO_X \subset f_*(\cO_Y) \subset 
(\eta_X)_*(\cO_{\tilde{X}})$. 
Since $\eta_X$ is finite, we may choose $f$ 
so that the subsheaf 
$f_*(\cO_Y) \subset (\eta_X)_*(\cO_{\tilde{X}})$ 
is maximal among the direct images of structure
sheaves of $G$-varieties equipped with a finite 
birational morphism to $X$.

We claim that $Y$ is $G$-normal
(and hence $f: Y \to X$ is the desired morphism). 
To check this, consider
a finite birational morphism of $G$-varieties
$g : Z \to Y$. Then again, $f \circ g : Z \to X$
factors through $\eta_Z$, and hence we have
$f_*(\cO_Y) \subset (f \circ g)_*(\cO_Z)
\subset (\eta_X)_*(\cO_{\tilde{X}})$.
By maximality, we obtain 
$f_*(\cO_Y) = (f \circ g)_*(\cO_Z)$
and hence $\cO_Y = g_*(\cO_Z)$ as $f$ is finite
surjective.
It follows that $g$ is an isomorphism,
proving the claim.

(ii) There exists a dense open $G$-stable subset
$U \subset X$ such that the induced morphisms
$f^{-1}(U) \to U$ and $\varphi^{-1}(U) \to U$ are 
isomorphisms. Thus, we may identify $U$ with an open 
subset of the fiber product $Z \times_X X'$, stable 
under the natural $G$-action. Let $Y$ be the
schematic closure of $U$ in $Z \times_X X'$;
then $Y$ is a $G$-variety equipped with 
finite birational $G$-morphisms $\psi : Y \to Z$,
$g: Y \to X'$
such that the square
\[ 
\xymatrix{Y \ar[r]^-{\psi} \ar[d]_g 
& Z \ar[d]^f \\
X' \ar[r]^-{\varphi} & X
}\]
commutes. Since $X'$ is $G$-normal, 
$g$ is an isomorphism; this yields the desired
morphism $X' \to Z$.
\end{proof}

With the above notation, we say that
$\varphi : X' \to X$ is 
the $G$-\emph{normalization}; it is unique 
up to unique $G$-isomorphism.

\begin{remark}\label{rem:normal}
If a $G$-variety $X$ is $H$-normal for some
subgroup scheme $H$ of $G$, then clearly
$X$ is $G$-normal. In particular, every
normal $G$-variety is $G$-normal. The converse
holds if $G$ is \'etale, since the $G$-action
lifts uniquely to an action on the normalization
(see e.g. \cite[Prop.~2.5.1]{Br17b}).
So the notion of $G$-normalization is only 
relevant in characteristic $p > 0$.
\end{remark}

\begin{corollary}\label{cor:Gnormodel}
Let $X$ be a variety equipped with a rational 
action of $G$. Then $X$ is equivariantly
birationally isomorphic to a $G$-normal
projective variety $Y$. If $X$ is a curve, 
then $Y$ is unique.
\end{corollary}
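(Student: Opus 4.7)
For existence, I would first apply Corollary \ref{cor:model} to replace $X$ by an equivariantly birationally isomorphic projective $G$-variety $X_0$, and then invoke Proposition \ref{prop:Gnormal}(i) to obtain a finite birational morphism of $G$-varieties $\varphi : Y \to X_0$ with $Y$ a $G$-normal $G$-variety. Since $\varphi$ is finite and $X_0$ is projective, pulling back an ample line bundle on $X_0$ yields an ample line bundle on $Y$, so $Y$ is projective. Composing $\varphi$ with the chosen birational isomorphism $X_0 \dasharrow X$ then exhibits $Y$ as the required $G$-normal projective model.

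For uniqueness when $X$ is a curve, suppose $Y_1$ and $Y_2$ are projective $G$-normal $G$-curves, each equivariantly birationally isomorphic to $X$; composing these isomorphisms yields a $G$-equivariant birational map $f : Y_1 \dasharrow Y_2$. The plan is to consider the schematic closure $Z \subset Y_1 \times Y_2$ of the graph of $f$, with $G$ acting diagonally. The graph is $G$-stable by equivariance of $f$ and is isomorphic via the first projection to the domain of definition of $f$, so it is geometrically integral; its closure $Z$ is therefore a closed $G$-stable geometrically integral subscheme of the projective variety $Y_1 \times Y_2$, hence a projective $G$-variety, and it comes with the two natural projections $p_i : Z \to Y_i$.

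The key step is to verify that each $p_i$ is a finite birational morphism of $G$-varieties. Equivariance is automatic, and each $p_i$ restricts to an isomorphism on the open subset corresponding to the domain of $f$ (respectively of $f^{-1}$), so $p_i$ is birational. The hard part will be finiteness: $p_i$ is projective, as a base change of $Y_1 \times Y_2 \to Y_i$, and dominant, with source and target both one-dimensional, so every non-empty fibre is zero-dimensional; hence $p_i$ is quasi-finite and proper, and therefore finite. The $G$-normality of $Y_i$ then forces each $p_i$ to be an isomorphism, and $p_2 \circ p_1^{-1}$ provides the required $G$-isomorphism $Y_1 \xrightarrow{\sim} Y_2$ extending $f$. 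It is precisely the dimension-one hypothesis that makes $p_i$ finite; in higher dimension the graph closure can contract subvarieties, which is why uniqueness of the $G$-normal projective model is not asserted beyond curves.
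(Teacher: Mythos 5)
Your proposal is correct and follows essentially the same route as the paper: existence via Corollary \ref{cor:model} plus the $G$-normalization of Proposition \ref{prop:Gnormal}(i), and uniqueness for curves via the schematic closure of the graph of the equivariant birational map, with finiteness of the projections coming from properness plus quasi-finiteness in dimension one. The only cosmetic difference is that the paper first invokes Lemma \ref{lem:affine} to produce dense open $G$-stable subsets $U_i\subset Y_i$ on which $f$ is an isomorphism before taking the graph closure, which makes the $G$-stability of that closure immediate rather than deduced from equivariance of the rational map.
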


\begin{proof}
The first assertion follows readily from
Corollary \ref{cor:model} together with
the existence of the $G$-normalization.

Assume that $X$ is a $G$-curve and consider 
two projective $G$-models $Y_1,Y_2$; then 
we have a $G$-equivariant rational map
$f : Y_1 \dasharrow Y_2$. Using Lemma
\ref{lem:affine}, we may find dense open
$G$-stable subsets $U_i \subset Y_i$ ($i = 1,2$)
such that $f$ restricts to an isomorphism
$U_1 \stackrel{\sim}{\longrightarrow} U_2$.
By a graph argument as in the proof 
of Proposition \ref{prop:Gnormal} (ii), 
this yields a projective $G$-curve 
$Y$ equipped with equivariant birational 
morphisms to $Y_1,Y_2$. The second assertion 
follows from this, as every birational 
morphism of projective curves is finite.
\end{proof}

\begin{lemma}\label{lem:quotient}
Let $X$ be a $G$-variety, and $N \lhd G$
a normal connected subgroup scheme. 

\begin{enumerate}

\item[{\rm (i)}] If $X$ is $G$-normal,
then $X/N$ is $G/N$-normal.

\item[{\rm (ii)}] If $X/N$ is $G/N$-normal
and $N$ acts freely on $X$, then $X$ is 
$G$-normal.

\end{enumerate}
 
\end{lemma}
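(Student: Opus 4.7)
For part (i), the plan is to test $G$-normality of $X$ by pulling back along $q_X\colon X\to X/N$. Given a finite birational $G/N$-morphism $\bar f\colon \bar Y\to X/N$, I will form the fibre product $Z=X\times_{X/N}\bar Y$, which carries a natural $G$-action (through $X$ and through $\bar Y$ via $G\to G/N$). Since $\bar f$ is birational, $k(\bar Y)=k(X/N)$, so the generic fibre of $Z\to X$ is $\Spec\bigl(k(X)\otimes_{k(X/N)}k(\bar Y)\bigr)=\Spec(k(X))$, a single reduced point. Hence $Z$ contains a unique irreducible component $Z_0$ dominating $X$, and $Z_0$ is $G$-stable because $G$ fixes the generic point of $X$. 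The projection $p\colon Z_0\to X$ is finite and birational, so $G$-normality of $X$ forces it to be an isomorphism. Composing $p^{-1}$ with the other projection $Z_0\to\bar Y$ yields $g\colon X\to\bar Y$ satisfying $\bar f\circ g=q_X$. Since $N$ acts trivially on $\bar Y$, $g$ descends through $q_X$ to a morphism $h\colon X/N\to\bar Y$ with $\bar f\circ h=\id$. Thus $h$ is a section of the separated morphism $\bar f$, hence a closed immersion, and for dimension reasons its image fills $\bar Y$, so $h$ and $\bar f$ are mutually inverse isomorphisms.

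For part (ii), I start with a finite birational $G$-morphism $f\colon Y\to X$ and first verify that $N$ acts freely on $Y$. The $N$-equivariance of $f$ yields an inclusion $\Stab_N^Y\subset(\id_N\times f)^{-1}(\Stab_N^X)=\{e\}\times Y$, so $N$ acts freely on $Y$. Hence $q_Y\colon Y\to Y/N$ exists by Lemma \ref{lem:quot} (the preimages under $f$ of affine opens of $X/N$ pulled back via $q_X$ provide an $N$-stable affine cover of $Y$), and $f$ descends to a $G/N$-morphism $\bar f\colon Y/N\to X/N$. This $\bar f$ is finite by descent from $f$, and birational since $k(Y/N)=k(Y)^N=k(X)^N=k(X/N)$ by Lemma \ref{lem:field}. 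By the $G/N$-normality of $X/N$, $\bar f$ is an isomorphism.

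To conclude part (ii), observe that both $q_X$ and $q_Y$ are $N$-torsors by Lemma \ref{lem:quot}, so in the square
\[
\xymatrix{
Y \ar[r]^f \ar[d]_{q_Y} & X \ar[d]^{q_X} \\
Y/N \ar[r]^-{\bar f}_-{\sim} & X/N
}
\]
the universal map $Y\to Y/N\times_{X/N}X$ is a morphism of $N$-torsors over $Y/N$, hence an isomorphism. Using $\bar f$ to identify $Y/N$ with $X/N$ gives $Y\cong X/N\times_{X/N}X=X$, and this isomorphism is $f$ by construction.

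The main obstacle lies in part (i): isolating the unique $G$-stable irreducible component $Z_0$ of the possibly reducible fibre product $Z$, and establishing that it is both finite and birational over $X$ (the function-field calculation $k(\bar Y)=k(X/N)$ is the crucial input). Once $Z_0\cong X$ is in hand, the rest of (i) is formal. Part (ii) is conceptually straightforward, with the only delicate step being the scheme-theoretic verification that $N$ acts freely on $Y$.
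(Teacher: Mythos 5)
Your proposal is correct and follows essentially the same route as the paper's proof: in (i) you build the graph/fibre-product model over $X$, use $G$-normality to invert its projection to $X$, and descend the resulting map through $q_X$ to a section of $\bar f$, exactly as in the paper (which takes the schematic closure of a dense open rather than the dominating component — the same object); in (ii) you descend freeness to $Y$, form the cartesian square of $N$-torsors, and transfer the problem to $\bar f$ via $G/N$-normality, again as in the paper. The only cosmetic difference is the order of steps in (ii) and your slightly more explicit verifications (freeness on $Y$, existence of $q_Y$), which the paper leaves implicit.
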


\begin{proof}
(i) Let $f : Y \to X/N$ be a finite birational
morphism of $G/N$-varieties. 
Arguing again as in the proof 
of Proposition \ref{prop:Gnormal} (ii), we obtain
a $G$-variety $Z$ equipped with finite birational
morphisms $\varphi : Z \to X$,
$\psi: Z \to Y$ such that the square
\[ 
\xymatrix{Z \ar[r]^{\psi}  \ar[d]_{\varphi} 
& Y \ar[d]^f \\
X \ar[r]^-q & X/N
}\]
commutes. Since $X$ is $G$-normal, 
$\varphi$ is an isomorphism. 
Then the resulting morphism 
$\psi \circ \varphi^{-1} : X \to Y$ 
is $N$-invariant over 
a dense open subset of $Y$, and hence 
everywhere. Since $q$ is a categorical
quotient, it follows that $f$ has a section.
Since $f$ is a finite morphism of varieties,
it is an isomorphism.

(ii) We argue similarly, and consider 
a finite birational morphism of $G$-varieties
$f : Y \to X$. Since $N$ acts freely on $X$,
it also acts freely on $Y$. Thus, the 
quotient morphisms 
$q_X : X \to X/N$, $q_Y : Y \to Y/N$ 
are $N$-torsors, and fit in a cartesian square
\[ 
\xymatrix{Y \ar[r]^{q_Y}  \ar[d]_f
& Y/N \ar[d]^{g} \\
X \ar[r]^{q_X} & X/N.
}\]
By fppf descent, it follows that $g$ is finite. 
Also, $g$ is birational as $f$ restricts to 
an isomorphism over a dense open $N$-stable
subset of $X$. Thus, $g$ is an isomorphism,
and hence so is $f$. 
\end{proof}

\begin{corollary}\label{cor:unibranch}
Let $X$ be a $G$-normal variety. Then $X/G^0$
is normal. Moreover, the normalization
$\eta : \tilde{X} \to X$ is radicial and bijective. 
\end{corollary}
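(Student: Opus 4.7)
The plan is to handle the two assertions in sequence. For the first, I would apply Lemma \ref{lem:quotient}(i) to the normal subgroup scheme $N = G^0 \lhd G$: this immediately yields that $X/G^0$ is $G/G^0$-normal, i.e.~$\pi_0(G)$-normal. Since $\pi_0(G)$ is \'etale and an \'etale group action lifts uniquely to the normalization (Remark \ref{rem:normal}), $\pi_0(G)$-normality coincides with ordinary normality here: the normalization of a $\pi_0(G)$-normal variety is a finite birational $\pi_0(G)$-variety above it, and hence must equal it. This settles the first assertion.

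For the second claim, the central idea is to compose the normalization $\eta : \tilde X \to X$ with the quotient $q : X \to X/G^0$. The map $q$ is finite, radicial, and bijective because $G^0$ is infinitesimal (as noted after Lemma \ref{lem:quot}), and $q \circ \eta : \tilde X \to X/G^0$ is then a finite morphism between normal varieties (using the first claim). Its associated function field extension is $k(X)/k(X)^{G^0}$, which is purely inseparable by Lemma \ref{lem:field}(iv). Thus the proof reduces to the following purely commutative-algebra lemma: \emph{a finite morphism $f : Y \to Z$ of integral normal schemes whose function field extension is purely inseparable is radicial and bijective.} I would verify it locally, taking $f = \Spec(B) \to \Spec(A)$ with $A \subset B$ a finite extension of normal domains whose fraction field extension is purely inseparable; the key observation is that for every $b \in B$ some $p$-power $b^{p^n}$ lies in $\operatorname{Frac}(A)$, and, being integral over the normal ring $A$, in fact $b^{p^n} \in A$. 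This single fact yields uniqueness of primes of $B$ above a given prime of $A$, pure inseparability of residue field extensions, and surjectivity via the going-up property.

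Finally, once we know that $q \circ \eta$ is radicial and bijective, a short diagram chase transfers the conclusion to $\eta$ itself: injectivity of $q \circ \eta$ forces injectivity of $\eta$; residue-field extensions at points of $\tilde X$ are purely inseparable over those at the corresponding points of $X$ because the full tower $\kappa(x)/\kappa(\eta(x))/\kappa(q\eta(x))$ is purely inseparable; and surjectivity of $\eta$ follows from surjectivity of $q \circ \eta$ together with injectivity of $q$. The only real obstacle in this plan is the intermediate commutative-algebra lemma, which is standard but crucially relies on the normality hypothesis.
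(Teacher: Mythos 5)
Your proposal is correct and follows essentially the same route as the paper: Lemma \ref{lem:quotient}(i) plus Remark \ref{rem:normal} for the first assertion, and factoring $\eta$ through the radicial bijective quotient $q : X \to X/G^0$ for the second, using that a finite morphism of normal varieties with purely inseparable function-field extension is radicial and bijective. The only difference is that you sketch proofs of the two auxiliary facts (the normality lemma and the transfer from $q \circ \eta$ to $\eta$) where the paper simply cites \cite[II.4.3.8]{EGA} and \cite[I.3.5.6]{EGA}.
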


\begin{proof}
The variety $X/G^0$ is $\pi_0(G)$-normal 
by Lemma \ref{lem:quotient}; this yields 
the first assertion in view of Remark
\ref{rem:normal}.

Recall that the quotient morphism 
$q : X \to X/G^0$ is radicial and bijective. 
Also, $q \circ \eta : \tilde{X} \to X/G^0$ is 
a finite morphism of normal varieties such that 
the corresponding extension of function fields 
is purely inseparable. 
As a consequence, $q \circ \eta$ 
is radicial and bijective as well (see e.g. 
\cite[II.4.3.8]{EGA}). This implies the 
second assertion by using \cite[I.3.5.6]{EGA}.
\end{proof}

Next, we relate the $G$-normalization
of a $G$-variety $X$ with the (usual) 
normalization of a variety obtained 
from $X$ by ``induction'', as in 
the proof of Proposition \ref{prop:generic}.
More specifically, embed $G$ as a closed 
subgroup scheme of a smooth connected
algebraic group $G^{\#}$. Then 
$G^{\#} \times X$ is a variety equipped with
a $G^{\#} \times G$-action via 
$(a,g) \cdot (b,x) = (ab g^{-1}, g \cdot x)$.
Since $G^0$ is infinitesimal, the quotient variety
$X^{\#} = G^{\#} \times^{G^0} X$ exists; it is 
equipped with an action of 
$G^{\#} \times (G/G^0) = G^{\#} \times \pi_0(G)$ 
together with 
a $G^{\#} \times (G/G^0)$-equivariant morphism
\[ \psi : X^{\#} \longrightarrow G^{\#}/G^0. \] 
Here $G^{\#}/G^0$
is identified with the homogeneous space
$(G^{\#} \times (G/G^0))/G$, where $G$ is
embedded diagonally in $G^{\#} \times (G/G^0)$.
The fiber of $\psi$ at the base point 
of this homogeneous space is $G$-equivariantly
isomorphic to $X$.

Now coonsider the normalization
$\eta^{\#} : \tilde{X}^{\#} \to X^{\#}$.
Since $G^{\#} \times (G/G^0)$ is smooth, 
its action on $X^{\#}$ lifts uniquely 
to an action on $\tilde{X}^{\#}$ such that 
$\eta^{\#}$ is equivariant (see e.g.
\cite[Prop.~2.5.1]{Br17b}). Thus,
$\psi \circ \eta^{\#} : 
\tilde{X}^{\#} \to G^{\#}/G^0$ is
$G^{\#} \times (G/G^0)$-equivariant as well.
So its fiber at the base point is a 
$G$-scheme $Y$ equipped with
a $G$-equivariant morphism 
\[ \mu : Y \longrightarrow X \]
Moreover, the morphism
$G^{\#} \times Y \to \tilde{X}^{\#}$,
$(a,y) \mapsto a \cdot y$ factors 
uniquely through an isomorphism
$G^{\#} \times^{G^0} Y
\stackrel{\sim}{\longrightarrow}
\tilde{X}^{\#}$.

\begin{lemma}\label{lem:induced}
With the above notation,
$\mu$ is the $G$-normalization, and 
the $G^0$-norma\-lization as well.
\end{lemma}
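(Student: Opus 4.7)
The strategy is to compare $\mu:Y\to X$ with the $G$-normalization $\varphi:X'\to X$ (which exists by Proposition~\ref{prop:Gnormal}), using the induction functor $G^{\#}\times^{G^0}(-)$ and the universal property of the normalization of $X^{\#}$.

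First, I check that $\mu$ is a finite birational $G$-morphism with $Y$ a variety. $G$-equivariance and finiteness come directly from the corresponding properties of $\eta^{\#}$ restricted to the fiber over the base point. For birationality, choose a $(G^{\#}\times\pi_0(G))$-stable dense open $V\subset X^{\#}$ on which $\eta^{\#}$ is an isomorphism; by $G^{\#}$-equivariance of $\psi$ and transitivity of $G^{\#}$ on $G^{\#}/G^0$, the image $\psi(V)$ must equal $G^{\#}/G^0$, so $V$ meets the fiber $X$ in a dense open, over which $\mu$ restricts to an isomorphism onto an open of $X$. Integrality of $Y$ follows from the isomorphism $\tilde{X}^{\#}\cong G^{\#}\times^{G^0}Y$ together with integrality of $\tilde{X}^{\#}$: irreducibility descends through the quotient presentation, and reducedness follows from flatness of the bundle map $\psi\circ\eta^{\#}$ over the regular base $G^{\#}/G^0$.

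By Proposition~\ref{prop:Gnormal}(ii), there is a unique $G$-morphism $\alpha:X'\to Y$ with $\mu\circ\alpha=\varphi$. To produce the inverse $\beta:Y\to X'$, apply the induction functor to $\varphi$: the morphism $G^{\#}\times^{G^0}X'\to G^{\#}\times^{G^0}X=X^{\#}$ is $G^{\#}$-equivariant, finite, and birational. The key point is that $G^{\#}\times^{G^0}X'$ is a variety, because its structure sheaf is locally the $G^0$-invariants of the reduced ring $\cO(G^{\#}\times X')$ (hence reduced) and its underlying space is irreducible (being the image of the irreducible space $G^{\#}\times X'$). By the universal property of the normalization, $\eta^{\#}$ factors uniquely as $\tilde{X}^{\#}\to G^{\#}\times^{G^0}X'\to X^{\#}$; the first map is $(G^{\#}\times\pi_0(G))$-equivariant by uniqueness, and restricting to the fiber over the base point of $G^{\#}/G^0$ yields $\beta:Y\to X'$.

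Finally, $\alpha\circ\beta:Y\to Y$ satisfies $\mu\circ(\alpha\circ\beta)=\mu$, and since $\mu$ is finite birational with integral source, this forces $\alpha\circ\beta=\id_Y$; the same argument gives $\beta\circ\alpha=\id_{X'}$. Hence $Y\cong X'$ as $G$-varieties over $X$, so $\mu$ is the $G$-normalization; the $G^0$-normalization follows identically, with the $G^0$-normalization of $X$ in place of $\varphi$. The main obstacle is verifying integrality of $G^{\#}\times^{G^0}X'$ (and similarly for any $G^0$-variety induced up), which requires the quotient-by-invariants description to handle reducedness and some care about the interaction of the non-reduced infinitesimal group scheme $G^0$ with irreducible components; a related technical point arises already in the first step for $Y$ itself.
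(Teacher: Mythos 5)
Your proof follows essentially the same route as the paper's (induce the $G^0$-normalization $\varphi: X' \to X$ up to $G^{\#} \times^{G^0} X' \to X^{\#}$, lift $\eta^{\#}$ through it by the universal property of normalization, and restrict to the fibre over the base point), and you have correctly identified the crux: the integrality of $Y$. But your justification of the reducedness of $Y$ is wrong. Flatness of the equivariant morphism $\psi \circ \eta^{\#}$ over the smooth base $G^{\#}/G^0$ does \emph{not} force its fibres to be reduced: the relative Frobenius $\bG_a \to \bG_a^{(p)} = \bG_a/\alpha_p$ is a flat $\bG_a$-equivariant morphism of smooth curves whose fibre over the origin is $\alpha_p = \Spec \, k[t]/(t^p)$. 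So an equivariant fibre over the base point of $G^{\#}/G^0$ can perfectly well be non-reduced when $G^0$ is infinitesimal, and nothing in your argument rules this out for $Y$.

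The input that actually saves the day is the \emph{birationality} of $\eta^{\#}$, exploited through the cartesian square
\[
G^{\#} \times Y \; \simeq \; \tilde{X}^{\#} \times_{X^{\#}} (G^{\#} \times X),
\]
obtained by pulling back $\eta^{\#}$ along the $G^0$-torsor $G^{\#} \times X \to X^{\#}$ (this is how the paper argues). Locally this reads $\cO(G^{\#} \times Y) = \tilde{A} \otimes_A B$ with $B = \cO(G^{\#} \times X)$ flat over $A = \cO(X^{\#})$ and $\tilde{A}$ contained in the fraction field of $A$; flatness of $B$ then embeds $\tilde{A} \otimes_A B$ into the localization $(A \setminus \{0\})^{-1} B$ of the domain $B$, so $G^{\#} \times Y$, and hence $Y$, is integral. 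The same square gives finiteness and birationality of $\mu$ at one stroke, replacing your separate argument with the stable open $V$. The remainder of your proof is sound, and your treatment of the integrality of $G^{\#} \times^{G^0} X'$ via invariant rings is fine; the only other difference from the paper is cosmetic, namely that the paper does not need your map $\alpha$ at all: once $\delta = \beta : Y \to X'$ is known to be a finite birational $G^0$-morphism, the $G^0$-normality of $X'$ makes it an isomorphism by definition.
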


\begin{proof}
By construction, we have a cartesian square
\[ 
\xymatrix{G^{\#} \times Y 
\ar[r]^{\id \times \mu} \ar[d]
& G^{\#} \times X \ar[d] \\
\tilde{X}^{\#} \ar[r]^{\eta^{\#}} & X^{\#}
}\]
where the vertical arrows are $G^0$-torsors.
As $\eta^{\#}$ is finite and birational, 
the same holds for $\id \times \mu$,
and hence for $\mu$; in particular,
$Y$ is a variety. 

Next, the $G^0$-normalization
$\varphi : X' \to X$ yields 
a $G^{\#}$-equivariant morphism
\[ G^{\#} \times^{G^0} \varphi :  
G^{\#} \times^{G^0} X' \longrightarrow
G^{\#} \times^{G^0} X = X^{\#} \]
which is finite and birational 
by the above argument. By the universal
property of the normalization, 
$\eta^{\#}$ lifts to a unique morphism
$\gamma : \tilde{X}^{\#} \to 
G^{\#} \times^{G^0} X'$ which is finite
and birational, and hence 
$G^{\#}$-equivariant. Moreover,
$\gamma$ is a morphism of varieties
over $G^{\#}/G^0$, and hence
restricts to a finite birational
$G^0$-morphism $\delta : Y \to X'$.
Since $X'$ is $G^0$-normal,
$\delta$ is an isomorphism. 
So $Y$ is $G^0$-normal, and hence 
$G$-normal by Remark \ref{rem:Gnormal}.
\end{proof}

As a direct consequence of Lemma 
\ref{lem:induced}, we obtain:

\begin{corollary}\label{cor:induced}
The following conditions are equivalent 
for a $G$-variety $X$:

\begin{enumerate}

\item[{\rm (i)}]
$X$ is $G$-normal.

\item[{\rm (ii)}]
$X$ is $G^0$-normal.

\item[{\rm (iii)}]
$X^{\#}$ is normal. 

\end{enumerate}

\end{corollary}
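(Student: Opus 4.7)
The plan is to leverage Lemma \ref{lem:induced}, which produces a single morphism $\mu : Y \to X$ that is \emph{simultaneously} the $G$-normalization and the $G^0$-normalization of $X$. Since the $G$-normalization (resp.~$G^0$-normalization) is unique up to unique equivariant isomorphism, condition (i) (resp.~(ii)) is equivalent to $\mu$ being an isomorphism. This yields (i) $\Leftrightarrow$ (ii) at once.

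To connect these conditions with (iii), I would invoke the cartesian square
\[
\xymatrix{G^{\#} \times Y
\ar[r]^{\id \times \mu} \ar[d]
& G^{\#} \times X \ar[d] \\
\tilde{X}^{\#} \ar[r]^{\eta^{\#}} & X^{\#}
}
\]
extracted from the proof of Lemma \ref{lem:induced}, in which the vertical arrows are $G^0$-torsors and hence faithfully flat and quasi-compact. By faithfully flat descent for the property of being an isomorphism, $\eta^{\#}$ is an isomorphism if and only if $\id \times \mu$ is, if and only if $\mu$ itself is an isomorphism.

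The remaining implications are then short. For (ii) $\Rightarrow$ (iii): if $\mu$ is an isomorphism, so is $\eta^{\#}$ by descent, and therefore $X^{\#}$ coincides with its normalization $\tilde{X}^{\#}$, which is normal by construction. Conversely, for (iii) $\Rightarrow$ (ii): if $X^{\#}$ is already normal, then its normalization morphism $\eta^{\#}$ is an isomorphism; descent along the $G^0$-torsors then forces $\mu$ to be an isomorphism, so $X$ is $G^0$-normal.

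I do not anticipate a genuine obstacle here: the corollary is essentially a repackaging of Lemma \ref{lem:induced} via faithfully flat descent. The only point requiring care is to confirm that the cartesian square really is available with $G^0$-torsor vertical maps (which is exactly what the proof of Lemma \ref{lem:induced} establishes), and to appeal to descent in the correct form. The formulation ``$X^{\#}$ normal $\Leftrightarrow$ $X$ is $G^0$-normal'' is what makes the induction construction $X \mapsto X^{\#}$ useful, since it converts $G^{\#}$-equivariant questions about smooth algebraic-group actions into questions about $G$-normality of $X$.
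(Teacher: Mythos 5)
Your proposal is correct and matches the paper's intent: the corollary is stated there as ``a direct consequence of Lemma \ref{lem:induced},'' and the details you supply --- that $\mu$ is simultaneously the $G$- and the $G^0$-normalization, plus faithfully flat descent of ``being an isomorphism'' along the $G^0$-torsors in the cartesian square --- are exactly the implicit argument. No gaps.
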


\begin{proposition}\label{prop:field}
Let $X$ be a $G$-variety, and $k'/k$ 
a field extension. 

\begin{enumerate}

\item[{\rm (i)}] If $X_{k'}$ is 
$G_{k'}$-normal, then $X$ is $G$-normal.

\item[{\rm (ii)}] If $X$ is $G$-normal 
and $k'$ is separable over $k$, 
then $X_{k'}$ is $G_{k'}$-normal.

\end{enumerate}

\end{proposition}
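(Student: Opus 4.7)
The plan is to handle (i) by base change and faithfully flat descent, and (ii) by reducing to the usual normality criterion of Corollary~\ref{cor:induced} together with the standard fact that over a perfect field, normality coincides with geometric normality.

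For (i), given a finite birational morphism of $G$-varieties $f : Y \to X$, I would base change to obtain $f_{k'} : Y_{k'} \to X_{k'}$. Since $X$ and $Y$ are geometrically integral over $k$, their base changes remain $k'$-varieties, and $f_{k'}$ inherits finiteness and $G_{k'}$-equivariance. Birationality survives as well: if $U \subset X$ is a dense open over which $f$ is an isomorphism, then $U_{k'} \subset X_{k'}$ is still dense (by irreducibility of $X_{k'}$), and $f_{k'}$ is an isomorphism over it. The $G_{k'}$-normality hypothesis then forces $f_{k'}$ to be an isomorphism, and faithfully flat descent along $\Spec(k') \to \Spec(k)$ transfers this back to $f$.

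For (ii), I would embed $G$ as a closed subgroup scheme of a smooth connected algebraic group $G^{\#}$ and form $X^{\#} = G^{\#} \times^{G^0} X$ as in the construction preceding Lemma~\ref{lem:induced}. Since categorical quotients commute with flat base change, one has $(X^{\#})_{k'} = (X_{k'})^{\#}$, where the right-hand side is formed relative to the embedding $G_{k'} \subset G^{\#}_{k'}$. By Corollary~\ref{cor:induced} applied over $k$ and over $k'$, the assertion then reduces to: if $X^{\#}$ is a normal $k$-variety, then $(X^{\#})_{k'}$ is a normal $k'$-variety.

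The main obstacle is this last step, where the perfectness of $k$ is essential. I would invoke the standard EGA result that over a perfect field, a finite-type scheme is normal if and only if it is geometrically normal (the perfect closure of $k$ coincides with $k$ itself), so that an arbitrary base change preserves normality. Without this hypothesis, normality can be destroyed by an inseparable base change, and one would expect (ii) to fail in that setting; so any argument for (ii) cannot avoid a genuinely global property of $k$.
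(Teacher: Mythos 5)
Your proposal is correct and follows essentially the same route as the paper: part (i) is the same base-change-plus-faithfully-flat-descent argument, and part (ii) is the same reduction via Corollary~\ref{cor:induced} to the normality of $X^{\#}$, using that the formation of $X^{\#}$ commutes with field extensions and that normality over a perfect field survives arbitrary (equivalently, separable) field extensions. The only cosmetic difference is that the paper phrases the key input as preservation of normality under separable extensions rather than as the normal-iff-geometrically-normal criterion over a perfect field, which is the same fact.
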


\begin{proof}
(i) Consider a finite birational morphism 
of $G$-varieties $f : Y \to X$. Then 
the base change
$f_{k'} : Y_{k'} \to X_{k'}$ is a finite
birational morphism of $G_{k'}$-varieties, 
and hence an isomorphism. By descent, 
$f$ is an isomorphism as well.

(ii) By Corollary \ref{cor:induced},
we may assume that $G$ is connected.
Then the assertion follows from this 
corollary, since the formation
of $X^{\#}$ commutes with field extensions,
and normality is preserved under separable
field extensions (see 
\cite[${\rm IV}_{\rm 2}$.6.7.4]{EGA}
for the latter assertion).
\end{proof}

\begin{remark}\label{rem:nonperfect}
It is well known that normality may not 
be preserved 
under a non-trivial purely inseparable field 
extension $k'/k$ (see e.g. 
\cite[${\rm IV}_{\rm 2}$.6.7.5]{EGA}). 
This also holds for $G$-normality where 
$G = \alpha_p$, as shown by the following
example: Choose $a \in k$ such that 
$a^{1/p} \in k' \setminus k$.
Let $G^{\#}$ be the affine plane curve 
with equation $y^p = x + a x^p$. Then 
$G^{\#}$ is a smooth connected algebraic group 
via pointwise addition of coordinates,
and its Frobenius kernel (the zero subscheme 
of $(x^p,y^p)$) is isomorphic to $G$.
Moreover, one may check that the normal 
projective completion of $G^{\#}$ is the
projective plane curve $X$ with homogeneous 
equation $y^p = xz^{p-1} + ax^p$. 
The $G^{\#}$-action on itself by translation 
extends uniquely to an action on $X$. 
In particular, $X$ is a normal $G$-curve,
and hence is $G$-normal. But $X_{k'}$ 
is non-normal, since it has homogeneous equation
$(y  - a^{1/p}x)^p = x z^{p-1}$. As $G^{\#}_{k'}$ 
is smooth, its action on $X_{k'}$ lifts  
to an action on the normalization of this 
curve, and hence to a $G_{k'}$-action. It follows 
that $X_{k'}$ is not $G_{k'}$-normal.
\end{remark}

\begin{remark}\label{rem:Gintegral}
Consider an \emph{affine} $G$-variety $X$; 
then the normalization $\tilde{X}$ and
the $G$-normalization $X'$ are affine
as well, and we have inclusions of rings
\[ R = \cO(X) \subset R' = \cO(X') \subset 
\tilde{R} = \cO(\tilde{X}) \subset 
K = k(X) = k(X') = k(\tilde{X}). \]
Also, recall that $K$ is a $G$-module, 
and $R$, $R'$ are submodules (but 
$\tilde{R}$ is generally not a submodule).

We say that $f \in K$ is 
\emph{$G$-integral} (over $R$)
if the $G$-submodule of $K$ generated 
by $f$ is contained in $\tilde{R}$. 
We now claim that $R'$ is the subset 
of $K$ consisting of $G$-integral elements.

Indeed, since the direct sum and
tensor product of any two $G$-modules are $G$-modules
and the sum and product of integral elements are 
integral, we see that the $G$-integral elements form 
a subalgebra $S \subset K$. Clearly, we have
$R' \subset S \subset \tilde{R}$, 
and hence $S$ is a finite 
$R'$-module. In particular, $\Spec(S)$ is 
a $G$-variety equipped with a finite birational 
equivariant morphism to $X'$. Thus, 
$S = R'$, proving the claim. 

As a direct consequence of this claim, 
the formation of $R'$ commutes with 
localization by $G$-invariants of $R$. 

Next, consider a $G$-variety $X$ admitting
a covering by open affine $G$-stable subsets. 
Then the $G$-normalizations of these subsets may 
be glued to a $G$-variety, which is readily seen
to be the $G$-normalization. This provides an 
algebraic construction of the $G$-normalization.
\end{remark}

We now obtain an equivariant version of 
Serre's criterion for normality 
(see \cite[${\rm IV}_{\rm 2}$.5.8.6]{EGA}).
The latter can be stated as follows in our
setting: a variety $X$ is normal if and only
if it satisfies $(S_2)$ and the ideal sheaf 
of every closed subvariety is invertible 
in codimension $1$.

\begin{theorem}\label{thm:serre}
Let $X$ be a $G$-variety. Then $X$ is $G$-normal
if and only if it satisfies $(S_2)$ and 
the ideal sheaf $\cI_Z$ is invertible 
in codimension $1$ for any closed $G$-stable 
subscheme $Z \subsetneq X$.
\end{theorem}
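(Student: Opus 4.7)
I would derive both implications from Corollary \ref{cor:induced}, which identifies $G$-normality of $X$ with ordinary normality of the induced variety $X^{\#} := G^{\#} \times^{G^0} X$ for some embedding of $G$ into a smooth connected algebraic group $G^{\#}$, and then transfer local properties along the smooth projection $\pr : G^{\#} \times X \to X$ and the $G^0$-torsor $q : G^{\#} \times X \to X^{\#}$ (whose fibers are Artinian, since $G^0$ is infinitesimal).

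For the forward direction, assume $X$ is $G$-normal; then $X^{\#}$ is normal and hence satisfies $(S_2)$ and $(R_1)$ by the classical Serre criterion. Since depth and dimension are preserved by $q$ (Artinian fibers) and reflected by the smooth $\pr$, the condition $(S_2)$ descends from $X^{\#}$ to $X$. Given a $G$-stable subscheme $Z \subsetneq X$, form the induced subscheme $Z^{\#} := G^{\#} \times^{G^0} Z \subset X^{\#}$, of the same codimension. A codimension-one point $x \in X$ lifts to the codimension-one point $(\xi,x) \in G^{\#} \times X$ (where $\xi$ denotes the generic point of $G^{\#}$), whose image $\zeta \in X^{\#}$ is again codimension one. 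Normality of $X^{\#}$ makes $\cO_{X^{\#},\zeta}$ a DVR, so $\cI_{Z^{\#},\zeta}$ is invertible; its pullback under $q$ coincides with $\pr^{*}\cI_Z$ at $(\xi,x)$, and faithfully-flat descent along the smooth local homomorphism $\cO_{X,x} \to \cO_{G^{\#}\times X,(\xi,x)}$ then delivers the required invertibility of $\cI_Z$ at $x$.

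For the converse, let $\varphi : X' \to X$ be the $G$-normalization and set $\cC := \varphi_{*}\cO_{X'}$, a $G$-equivariant coherent $\cO_X$-algebra realized as a subalgebra of the constant sheaf $k(X)$. Let $\cI_Z := \{ r \in \cO_X : r \cdot \cC \subset \cO_X \}$ be the conductor, defining a $G$-stable closed subscheme $Z \subsetneq X$. The key formal observation is that $\cI_Z$ is simultaneously an ideal of $\cC$, so $\cI_Z \cdot \cC = \cI_Z$. At a codimension-one point $z \in Z$ the hypothesis gives $\cI_{Z,z} = (f)$ with $f \in \cO_{X,z}$ a non-zero-divisor, and cancellation in $f \cdot \cC_z = f \cdot \cO_{X,z}$ forces $\cC_z = \cO_{X,z}$, contradicting $z \in Z$. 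Hence $Z$ has codimension $\geq 2$, and the $(S_2)$ hypothesis on $X$ combined with torsion-freeness of $\cC \subset k(X)$ upgrades the equality $\cC|_{X\setminus Z} = \cO_X|_{X\setminus Z}$ to $\cC = \cO_X$, so $\varphi$ is an isomorphism.

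The main technical obstacle is in the forward direction: normality does not lift from $X^{\#}$ to $G^{\#} \times X$ because the fibers of $q$ are typically non-reduced (e.g.\ $\alpha_p$), so the descent of $(S_2)$ and of the invertibility condition must be carried out at the level of depth and locally principal ideals, relying crucially on $\dim G^0 = 0$.
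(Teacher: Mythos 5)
Your forward implication follows the paper's proof essentially verbatim: both arguments reduce to normality of $X^{\#}$ via Corollary \ref{cor:induced} and transfer $(S_2)$ together with the invertibility of $\cI_{Z^{\#}}$ at codimension-one points back to $X$ through the $G^0$-torsor $G^{\#}\times X\to X^{\#}$ and the (faithfully flat, smooth) projection to $X$; your closing remark about non-reduced fibres correctly identifies why one cannot transfer normality itself and must work with depth and local principality instead. The converse is where you genuinely diverge. The paper stays on $X^{\#}$: it observes that the singular locus $X^{\#}_{\sing}$ is stable under the smooth group $G^{\#}\times\pi_0(G)$, hence of the form $H\times^G Z$ for a closed $G$-stable $Z\subsetneq X$, so that the hypothesis makes $\cI_{X^{\#}_{\sing}}$ invertible in codimension $1$ and forces $\codim_{X^{\#}}(X^{\#}_{\sing})\geq 2$; the classical Serre criterion then yields normality of $X^{\#}$. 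You instead argue intrinsically on $X$ with the conductor of the $G$-normalization $\varphi : X'\to X$: since the conductor is an ideal of both $\cO_X$ and $\varphi_*(\cO_{X'})$, local principality by a non-zero-divisor at a codimension-one point forces $\varphi$ to be an isomorphism there, so the conductor locus has codimension at least $2$, and $(S_2)$ plus Hartogs extension for the torsion-free subsheaf $\varphi_*(\cO_{X'})\subset \cK$ finishes. Both routes are correct. Yours needs only the existence of $X'$ (Proposition \ref{prop:Gnormal}) and the $G$-stability of the conductor (which the paper itself justifies elsewhere by flat base change), avoids the small descent argument identifying $X^{\#}_{\sing}$ with an induced subscheme, and in fact shows that the invertibility hypothesis is only needed for the single conductor subscheme; the paper's route buys uniformity, reusing the $X^{\#}$ machinery and the classical Serre criterion wholesale for both halves of the statement.
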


\begin{proof}
We use the construction before Lemma
\ref{lem:induced}. By Corollary 
\ref{cor:induced}, it suffices to show 
the following two equivalences:

\begin{enumerate}
\item[{\rm (i)}] 
$X^{\#}$ satisfies $(S_2)$ if and only if
$X$ satisfies $(S_2)$.

\item[{\rm (ii)}]
$X^{\#}$ satisfies $(R_1)$ if and only if
$\cI_Z$ is invertible in codimension $1$ 
for any closed $G$-stable subscheme 
$Z \subsetneq X$.

\end{enumerate}

(i) This follows from the fact that 
$\psi : X^{\#} \to G^{\#}/G$ 
is a faithfully flat morphism to a smooth
variety, with fibers being obtained from
$X$ via base change by field extensions
(use \cite[${\rm IV}_{\rm 2}$.6.6.1]{EGA}).

For (ii), assume that $X^{\#}$
satisfies $(R_1)$. Let $Z \subsetneq X$
be a closed $G$-stable subscheme; then
$Z^{\#} = G^{\#} \times^{G^0} Z$
is a closed subscheme of $X^{\#}$, and
hence $\cI_{Z^{\#}}$ is invertible
in codimension $1$. Since the quotient
$G^{\#} \times X \to X^{\#}$ is a
$G^0$-torsor, it follows that
$\cI_{G^{\#} \times Z} 
\subset \cO_{G^{\#} \times Z}$
is invertible in codimension $1$ as well.
This yields the desired assertion.

Conversely, assume that $\cI_Z$ 
is invertible in codimension $1$ 
for any closed $G$-stable subscheme 
$Z \subsetneq X$.
Recall that $X^{\#}$ is equipped with
an action of the smooth algebraic group 
$H = G^{\#} \times \pi_0(G)$; thus, the
regular locus $X^{\#}_{\reg}$ is $H$-stable.
It follows that the singular locus 
$X^{\#}_{\sing}$ (equipped with its reduced 
subscheme structure) is $H$-stable as well
(indeed, the formation of $X^{\#}_{\sing}$
commutes with separable field extensions,
and hence we may assume $k$ separably
closed. Then $X^{\#}_{\sing}$ is stable
under $H(k)$, and hence under its
schematic closure $H$). So 
$X^{\#}_{\sing} = H \times^G Z$ for
a unique closed $G$-stable subscheme
$Z \subsetneq X$. By using the $G$-torsor
$H \times X \to X^{\#}$ as above,
it follows that $\cI_{X^{\#}_{\sing}}$
is invertible in codimension $1$.
This forces 
$\codim_{X^{\#}}(X^{\#}_{\sing}) \geq 2$.
\end{proof}

\begin{example}\label{ex:Gnormal}
Assume that $k$ is algebraically closed.
Consider the zero subscheme 
$X \subset \bA^{n+1}$
of $y^{p^m} -f(x_1,\ldots,x_n)$, where 
$m,n$ are positive integers, $x_1,\ldots, x_n,y$
denote the coordinates on $\bA^{n+1}$, and 
$f \in k[T_1,\ldots,T_n]$. 
The group scheme $\alpha_{p^m}$
acts freely on $\bA^{n+1}$ via 
$g \cdot (x_1,\ldots,x_n,y) = 
(x_1,\ldots,x_n, g + y)$ 
and this action stabilizes $X$. 
The quotient is the morphism
\[ \bA^{n+1} \longrightarrow \bA^{n+1},
\quad (x_1,\ldots,x_n,y) \longmapsto 
(x_1,\ldots,x_n,y^{p^m}). \]
Its restriction to $X$ is identified
with the projection
$(x_1,\ldots,x_n) : X \to \bA^n$.
So $X$ is $\alpha_{p^m}$-normal
by Corollary \ref{cor:unibranch}.
Also, $X$ is generally singular 
in codimension $1$, e.g., when 
$f$ is divisible by the square of 
a non-constant polynomial; then
the singular locus is not stable under
$\alpha_{p^m}$.

Next, let $\mu_{p^m}$ act on $\bA^{n+1}$
via 
$t \cdot (x_1,\ldots,x_n,y) = 
(x_1,\ldots,x_n, t y)$. Then this action
stabilizes $X$, and the quotient is
as above. One may check by using
Theorem \ref{thm:serre} that 
$X$ is not $\mu_{p^m}$-normal when
$f$ is divisible by the square of 
a non-constant polynomial.
\end{example}

Next, we obtain an equivariant version 
of a classical normality criterion 
for curves:

\begin{corollary}\label{cor:Gnormal}
The following conditions are equivalent
for a $G$-curve $X$:

\begin{enumerate}
 
\item[{\rm (i)}] $X$ is $G$-normal.

\item[{\rm (ii)}] For any closed point $x \in X$, 
the ideal $\cI_{G \cdot x}$ is invertible.

\item[{\rm (iii)}] For any closed $G$-stable 
subscheme $Z \subsetneq X$, the ideal $\cI_Z$ 
is invertible.

\end{enumerate}

\end{corollary}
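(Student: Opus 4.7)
The strategy is to reduce (i) $\Leftrightarrow$ (iii) to the equivariant Serre criterion (Theorem \ref{thm:serre}), and then to bridge (ii) and (iii) by an induction on length. For a curve $X$, the $(S_2)$ condition in Theorem \ref{thm:serre} is automatic: each stalk at a closed point is a one-dimensional local domain, whose maximal ideal contains a non-zero-divisor, so it has depth $\geq 1$. Moreover, the only points of codimension at most $1$ in $X$ are the generic point, where every ideal sheaf is trivially invertible, and the closed points; thus ``invertible in codimension $1$'' collapses to ``invertible everywhere''. Theorem \ref{thm:serre} therefore gives (i) $\Leftrightarrow$ (iii) at once, while (iii) $\Rightarrow$ (ii) is formal, since $G \cdot x$ is a $0$-dimensional, hence proper, $G$-stable closed subscheme of $X$.

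The substantive step is (ii) $\Rightarrow$ (iii), which I would prove by induction on the length $\ell(Z) := \dim_k \Gamma(Z,\cO_Z)$ (finite, since every proper closed subscheme of a curve is $0$-dimensional). The case $\ell(Z) = 0$ is trivial. For $\ell(Z) \geq 1$, pick a closed point $x \in \Supp(Z)$; since $G \cdot x$ is the smallest $G$-stable closed subscheme of $X$ containing $\{x\}$, we have $G \cdot x \subseteq Z$ scheme-theoretically, hence $\cI_Z \subseteq \cI_{G \cdot x}$. Setting $\cL := \cI_{G \cdot x}$, which is a $G$-equivariantly invertible sheaf by (ii), I form the $G$-equivariant ideal sheaf
\[
\cI_{Z'} := \cI_Z \otimes \cL^{-1} \subseteq \cL \otimes \cL^{-1} = \cO_X,
\]
defining a $G$-stable closed subscheme $Z' \subsetneq X$ with $\cI_Z = \cI_{Z'} \cdot \cL$. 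A short exact sequence
\[
0 \longrightarrow \cL \otimes_{\cO_X} \cO_{Z'} \longrightarrow \cO_Z \longrightarrow \cO_{G \cdot x} \longrightarrow 0
\]
then yields $\ell(Z) = \ell(Z') + \ell(G \cdot x)$, using that the invertibility of $\cL$ makes $\cL \otimes \cO_{Z'}$ locally free of rank $1$ over $\cO_{Z'}$, and so of the same $k$-length as $\cO_{Z'}$. Since $\ell(G \cdot x) \geq 1$, the induction hypothesis applies to $Z'$, giving $\cI_{Z'}$ invertible; then $\cI_Z = \cI_{Z'} \cdot \cL$ is invertible as a product of invertible sheaves.

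I do not foresee a serious obstacle. The one delicate point is the $G$-equivariance of $\cI_{Z'}$: both $\cI_Z \hookrightarrow \cO_X$ and $\cL \hookrightarrow \cO_X$ are $G$-equivariant inclusions of coherent sheaves, and $\cL$ is a $G$-linearized line bundle, so the tensor product $\cI_Z \otimes \cL^{-1}$ carries a natural $G$-equivariant structure compatible with its inclusion into $\cO_X$.
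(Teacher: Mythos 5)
Your proof is correct, but it organizes the cycle of implications differently from the paper, and one step is genuinely different. The paper proves (i)$\Rightarrow$(ii) by a self-contained blow-up argument: for $Z = G\cdot x$, the blow-up $\Bl_Z(X)\to X$ is a finite birational morphism of $G$-curves, hence an isomorphism by $G$-normality, and the universal property of the blow-up then forces $\cI_Z$ to be invertible. You instead extract both directions of (i)$\Leftrightarrow$(iii) from Theorem \ref{thm:serre}, correctly observing that for a curve the $(S_2)$ condition is automatic and ``invertible in codimension $1$'' collapses to ``invertible'' (the only points are the generic point and the closed points); the paper uses the theorem only for (iii)$\Rightarrow$(i). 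Your route is slightly more economical in that it avoids the blow-up entirely, though it leans on the full strength of the equivariant Serre criterion for a direction the paper handles by elementary means; the blow-up argument has the side benefit, exploited in Remark \ref{rem:Gnormal}, of exhibiting the $G$-normalization as an iterated blow-up of orbits of non-smooth points. Your (ii)$\Rightarrow$(iii) induction on length is essentially identical to the paper's: the same choice of $x\in Z$ with $G\cdot x\subseteq Z$ (justified, as you do, by minimality of $G\cdot x$ among $G$-stable closed subschemes containing $x$), the same auxiliary subscheme $Z'$ with $\cI_{Z'}=\cI_Z\otimes\cI_{G\cdot x}^{-1}$, and the same length count via $\cO_X/\cI_{Z'}\simeq \cI_{G\cdot x}/\cI_Z$.
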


\begin{proof}
(i)$\Rightarrow$(ii) Let $x  \in X$ be a closed point.
Set $Z = G \cdot x$ and consider the blow-up
$f : \Bl_Z(X) \to X$. Then $\Bl_Z(X)$ is a $G$-curve 
and $f$ is equivariant. Thus, $f$ is an isomorphism. 
By the universal property of the blow-up, this means 
that $\cI_Z$ is invertible.

(ii)$\Rightarrow$(iii) Let $Z \subsetneq X$ be a closed 
$G$-stable subscheme. We show that $\cI_Z$ is invertible 
by induction on the length
$\ell(Z) = \dim H^0(X, \cO_X/\cI_Z)$. 
We may assume that $\ell(Z) \geq 1$,
and hence choose a closed point $x \in Z$. Then 
$G \cdot x \subset Z$; thus, $\cI = \cI_Z$ is contained 
in $\cI_{G \cdot x} = \cJ$ and the latter is invertible. 
So $\cJ^{-1} \cI$ is a $G$-stable sheaf of ideals of 
$\cO_X$. Denoting by $W \subset X$ the corresponding 
closed $G$-stable subscheme, we have 
\[ 
\cO_X/\cI_W = \cO_X/\cJ^{-1} \cI \simeq
\cJ/(\cJ^{-1}\cI \otimes_{\cO_X} \cJ) \simeq
\cJ/\cI. \]
As a consequence,
$\ell(W) = \dim H^0(X,\cJ/\cI) < \ell(Z)$.
By the induction assumption, $\cI_W$ 
is invertible, and hence so is $\cI_Z$.

(iii)$\Rightarrow$(i) This follows readily 
from Theorem \ref{thm:serre}.
\end{proof}

\begin{remark}\label{rem:Gnormal}
Given a smooth closed point $x$ of
a $G$-curve $X$, 
the orbit $G \cdot x$ is contained in 
the smooth locus of $X$, and hence 
the ideal $\cI_{G \cdot x}$ is invertible. 
So to check the $G$-normality of $X$, 
it suffices to show that $\cI_{G \cdot x}$ 
is invertible for any non-smooth point $x$. 

Likewise, the $G$-normalization of $X$ 
is obtained by iterating the process of 
blowing up the $G$-orbits of non-smooth points. 
\end{remark}

\begin{corollary}\label{cor:prime}
The following conditions are equivalent for 
a finite group scheme $G$ of order $p$
and a $G$-curve $X$:

\begin{enumerate}

\item[{\rm (i)}] $X$ is $G$-normal.

\item[{\rm (ii)}] $X/G$ is normal and 
$X$ is normal at every $G$-stable point. 
\end{enumerate}

\end{corollary}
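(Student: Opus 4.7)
The plan is to deduce both directions from the pointwise invertibility criterion of Corollary~\ref{cor:Gnormal}, together with Corollary~\ref{cor:quotient}. The enabling observation is a dichotomy for closed points $x \in X$ forced by $\vert G \vert = p$: the stabilizer $\Stab_G(x)$ is a closed subgroup scheme of $G_{\kappa(x)}$, so by multiplicativity of order in a closed immersion of finite flat $\kappa(x)$-schemes its order divides $p$, and hence equals $1$ or $p$. Thus $\Stab_G(x)$ is either trivial or all of $G_{\kappa(x)}$, so every closed point of $X$ is either $G$-fixed or a point at which $G$ acts freely.

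For (i)$\Rightarrow$(ii), normality of $X/G$ is immediate from Corollary~\ref{cor:quotient}. At a $G$-fixed point $x$, the orbit map $G \to X$, $g \mapsto g \cdot x$, is constant and factors through the closed immersion $\Spec \kappa(x) \hookrightarrow X$; since the only non-empty closed subscheme of $\Spec \kappa(x)$ is itself, the schematic image equals the reduced point and $\cI_{G \cdot x} = \fm_x$. Corollary~\ref{cor:Gnormal} then forces $\fm_x$ to be invertible, which for a one-dimensional Noetherian local ring means $\cO_{X,x}$ is a DVR; hence $X$ is normal (in fact regular) at $x$.

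For (ii)$\Rightarrow$(i), I would verify invertibility of $\cI_{G \cdot x}$ at every closed point and invoke Corollary~\ref{cor:Gnormal}. At $G$-fixed points, normality of $X$ at $x$ forces regularity (since $\dim X = 1$), so $\fm_x = \cI_{G \cdot x}$ is invertible. At free points, Lemma~\ref{lem:quot} makes the quotient $q : X \to X/G$ a $G$-torsor on a $G$-stable open neighborhood of $x$, and the resulting graph isomorphism $G \times X \stackrel{\sim}{\longrightarrow} X \times_{X/G} X$ identifies $G \cdot x$ with the scheme-theoretic fiber $q^{-1}(q(x))$. Normality of the curve $X/G$ gives invertibility of $\cI_{q(x)}$, and pulling back along the flat morphism $q$ transports this to invertibility of $\cI_{G \cdot x}$.

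The only genuinely new input beyond the preceding corollaries is the stabilizer dichotomy, which I expect to be the main (though mild) obstacle: it rests on a small group-scheme argument specific to prime order, and without it a non-fixed point could in principle carry a nontrivial infinitesimal stabilizer that obstructs the torsor argument. The scheme-theoretic image computation at a fixed point and the equivalence of $\fm_x$-invertibility with normality on a one-dimensional scheme are standard.
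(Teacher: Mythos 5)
Your proof is correct and follows essentially the same route as the paper: both directions reduce to the invertibility criterion of Corollary~\ref{cor:Gnormal} via the dichotomy (fixed point versus free point) forced by $\vert G\vert = p$, with Corollary~\ref{cor:quotient} supplying normality of $X/G$ and the torsor/flat-pullback argument handling free orbits. The paper states the stabilizer dichotomy in one line ("$\Stab_G(x)$ is trivial as $G$ has order $p$"); your justification should be phrased as Lagrange's theorem for the closed \emph{subgroup scheme} $\Stab_G(x)\subset G_{\kappa(x)}$ rather than mere multiplicativity along a closed immersion, but this is a wording issue, not a gap.
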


\begin{proof}
(i)$\Rightarrow$(ii) The normality of $X/G$ 
follows from Corollaries \ref{cor:unibranch}
and \ref{cor:induced}.
If $x \in X$ is $G$-stable, then the ideal 
$\cI_x$ is invertible by Corollary
\ref{cor:Gnormal},
hence $X$ is normal at $x$.

(ii)$\Rightarrow$(i) Let $x \in X$ be a closed point.
If $x$ is $G$-stable, then $\cI_{G \cdot x} = \cI_x$
is invertible. Otherwise, we claim that 
$\Stab_G(x)$ is trivial. To check this, view $x$ 
as a $\kappa(x)$-point of the $G_\kappa(x)$-variety
$X_{\kappa(x)}$; then $x$ is not 
$G_{\kappa(x)}$-stable, since its orbit 
$G_{\kappa(x)} \cdot x$ has schematic image
$G \cdot x$ under the projection
$X_{\kappa(x)} \to X$. As 
$G_{\kappa(x)} \cdot x \simeq 
G_{\kappa(x)}/\Stab_G(x)$, it follows that
$\Stab_G(x)$ is strictly contained in 
$G_{\kappa(x)}$. This implies our claim, since
$G$ has order $p$. 

By the claim, the quotient
$q : X \to X/G$ is a $G$-torsor at $x$, and hence
$\cI_{G \cdot x} = \cI_{q(x)} \cO_X$. Since 
the curve $X/G$ is normal, $\cI_{q(x)}$ is
invertible; therefore, so is $\cI_{G \cdot x}$. 
By Corollary \ref{cor:Gnormal} again, 
it follows that $X$ is $G$-normal.
\end{proof}

Next, assume that $G$ is a subgroup scheme of 
a smooth connected algebraic group $G^{\#}$. 
Then the quotient $G^{\#} \times^G X$
exists for any $G$-curve $X$, 
since $G^{\#}$ and $X$ are quasi-projective.

\begin{proposition}\label{prop:regular}
With the above notation and assumptions,
$X$ is $G$-normal if and only if 
$G^{\#} \times^G X$ is regular.
\end{proposition}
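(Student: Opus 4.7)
My plan is to factor the claim through the auxiliary variety $X^{\#} = G^{\#} \times^{G^0} X$ introduced before Lemma~\ref{lem:induced}, establishing the chain of equivalences
\[
X \text{ is } G\text{-normal}
\iff X^{\#} \text{ is normal}
\iff X^{\#} \text{ is regular}
\iff Y := G^{\#} \times^G X \text{ is regular}.
\]
The first equivalence is Corollary~\ref{cor:induced}. For the third, observe that the natural morphism $X^{\#} \to Y$ is the quotient by the residual action of $\pi_0(G) = G/G^0$. Since the $G$-action on $G^{\#} \times X$ is free via the first factor, a direct check shows that $\pi_0(G)$ acts freely on $X^{\#}$; thus $X^{\#} \to Y$ is a $\pi_0(G)$-torsor, hence \'etale (as $\pi_0(G)$ is \'etale), and regularity passes between $X^{\#}$ and $Y$ in both directions.

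The crux is the middle equivalence ``$X^{\#}$ normal iff $X^{\#}$ regular''. Only the converse requires argument, and this is where I expect the main obstacle. The approach is to exploit the smooth action on $X^{\#}$ of the algebraic group $H = G^{\#} \times \pi_0(G)$ together with the $H$-equivariant morphism $\psi: X^{\#} \to G^{\#}/G^0 = H/G$ described before Lemma~\ref{lem:induced}. Writing the action morphism $a: H \times X^{\#} \to X^{\#}$ as $a = \pr_2 \circ \sigma$, with $\sigma(h,x) = (h, h\cdot x)$ an automorphism of $H \times X^{\#}$ and $\pr_2$ smooth because $H$ is smooth over $k$, one sees that $a$ itself is smooth. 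In particular $a^{-1}(X^{\#}_{\reg}) = H \times X^{\#}_{\reg}$, so the singular locus $X^{\#}_{\sing}$ is set-theoretically $H$-stable.

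Assume now that $X^{\#}$ is normal and, for contradiction, that $X^{\#}_{\sing}$ is non-empty. Pick an irreducible component $Z \subset X^{\#}_{\sing}$; being irreducible, it is stable under the connected algebraic group $H^0 = G^{\#}$, and $\psi$ is $G^{\#}$-equivariant. Since $G^{\#}$ acts transitively on the homogeneous space $G^{\#}/G^0$, the image $\psi(Z)$ must be all of $G^{\#}/G^0$, giving $\dim Z \geq \dim G^{\#}/G^0 = \dim G^{\#}$. On the other hand, normality forces $\codim_{X^{\#}} X^{\#}_{\sing} \geq 2$ (the $(R_1)$ part of Serre's criterion), so
\[
\dim Z \;\leq\; \dim X^{\#} - 2 \;=\; \dim G^{\#} - 1,
\]
a contradiction whenever $\dim G^{\#} \geq 1$. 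The degenerate case $\dim G^{\#} = 0$ forces $G \subset G^{\#} = \{e\}$ and $Y = X$, whereupon the statement reduces to the classical fact that a one-dimensional normal Noetherian scheme is regular. Combining the three equivalences yields the proposition.
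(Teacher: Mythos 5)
Your proof is correct and follows essentially the same route as the paper: reduce to $X^{\#} = G^{\#} \times^{G^0} X$ via Corollary~\ref{cor:induced} and the $\pi_0(G)$-torsor $X^{\#} \to G^{\#} \times^G X$, then use stability of the singular locus under the group action together with the fibration $\psi : X^{\#} \to G^{\#}/G^0$ to force $X^{\#}_{\sing} = \emptyset$. The only (harmless) variation is in the final step: the paper descends $X^{\#}_{\sing}$ to a closed $G$-stable subscheme $Z \subsetneq X$ and concludes from $\codim_X(Z) \geq 2$ with $X$ a curve, whereas you run the equivalent dimension count directly on an irreducible component of $X^{\#}_{\sing}$, which lets you get by with set-theoretic rather than schematic stability.
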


\begin{proof}
Note that there is a canonical morphism
$X^{\#} = G^{\#} \times^{G^0} X
\to G^{\#} \times^G X$,
which is a torsor under the finite \'etale
group scheme $G/G^0 = \pi_0(G)$.  

If $G^{\#} \times^G X$ is regular, 
then so is $X^{\#}$ by 
\cite[${\rm IV}_{\rm 2}$.6.6.1]{EGA}. 
So $X$ is $G$-normal in view of Corollary 
\ref{cor:induced}.

The converse is obtained by arguing 
as in the end of the proof of 
Theorem \ref{thm:serre}: 
the singular locus of $X^{\#}$ satisfies
$X^{\#}_{\sing} = G^{\#} \times^G Z$
for some closed $G$-stable subscheme 
$Z \subsetneq X$. Thus, 
$\codim_Z(X) = 
\codim_{X^{\#}}(X^{\#}_{\sing})$.
Since $X^{\#}$ is normal (by Corollary
\ref{cor:induced} again), this yields
$\codim_X(Z) \geq 2$ and hence 
$Z = \emptyset = X^{\#}_{\sing}$.
So $X^{\#}$ is regular, and hence 
$G^{\#} \times^G X$ is regular as well
(see \cite[${\rm IV}_{\rm 2}$.6.6.1]{EGA} 
again).
\end{proof}

\begin{corollary}\label{cor:lci}
Every $G$-normal curve is a local 
complete intersection. 
\end{corollary}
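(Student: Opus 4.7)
The plan is to verify, at each closed point $x \in X$, that the local ring $\cO_{X,x}$ is a complete intersection. The main tool will be the classical reduction theorem (see, e.g., Matsumura, \emph{Commutative Ring Theory}, Thm.~21.2): a Noetherian local ring $A$ is a complete intersection if and only if $A/(f)$ is, for some (equivalently any) non-zero-divisor $f \in \fm_A$.

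First I would apply Corollary \ref{cor:Gnormal} to write $\cI_{G \cdot x, x} = (f)$ for some $f \in \cO_{X, x}$. Since $X$ is a curve, $\cO_{X, x}$ is a $1$-dimensional integral domain, and since $G \cdot x$ is a proper closed subscheme of $X$, the generator $f$ is a non-zero-divisor. Hence $\cO_{X, x}/(f) \cong \cO_{G \cdot x, x}$, the local ring of the schematic orbit at $x$, and Matsumura's reduction says it suffices to show this local ring is a complete intersection.

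Next I would identify the schematic orbit: $G \cdot x$, being the image of the orbit map $a_x : G_{\kappa(x)} \to X$, is canonically isomorphic to the homogeneous quotient $G_{\kappa(x)}/\Stab_G(x)$, and so $\cO(G \cdot x) = \cO(G_{\kappa(x)})^{\Stab_G(x)}$. Then I would invoke the classical fact that every finite flat homogeneous $G$-space $G/H$ over a field is a local complete intersection -- for commutative $G$ this is due to Tate-Oort and Raynaud, and it extends to the non-commutative case via the structural theory of finite group schemes. Thus $\cO_{G \cdot x, x}$ is a complete intersection, and combining with Matsumura's reduction, $\cO_{X, x}$ is a complete intersection for every closed $x \in X$, completing the proof.

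The hard part will be verifying the complete-intersection property of $G \cdot x \cong G_{\kappa(x)}/\Stab_G(x)$ in full generality. For the commutative finite group schemes arising in this paper (subgroup schemes of $\bG_a$, $\bG_m$, and elliptic curves), this is classical and straightforward. For a possibly non-commutative finite group scheme $G$, one needs to invoke a more general structural result, or reduce to the commutative case step-by-step via the connected-\'etale sequence $1 \to G^0 \to G \to \pi_0(G) \to 1$ and successive quotients by normal subgroup schemes.
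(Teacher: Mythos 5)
Your proposal is correct and follows essentially the same route as the paper: both reduce, via the invertibility of $\cI_{G \cdot x}$ from Corollary \ref{cor:Gnormal}, to showing that the orbit $G \cdot x$ is a complete intersection, and then transfer this to $\cO_{X,x}$ across the regular element generating $\cI_{G \cdot x}$ locally (the paper phrases this last step via lci morphisms and \cite[IV.19.3.10]{EGA} rather than Matsumura's reduction theorem). The ``hard part'' you flag, namely that homogeneous spaces under finite group schemes are local complete intersections, is precisely what the paper takes from the local structure theorem \cite[III.3.6.1]{DG}, combined with \cite[IV.19.3.9]{EGA} to handle closed points whose residue field is larger than $k$.
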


\begin{proof}
Given a $G$-normal curve $X$,
consider the natural morphism 
$\psi : X^{\#} = G^{\#} \times^G X 
\to G^{\#}/G$
with fiber $X$ at the base point.
Note that $X^{\#}$ is regular, 
$G^{\#}/G$ is smooth, and $\psi$
is faithfully flat (e.g., by 
$G^{\#}$-equivariance). So the assertion
follows from 
\cite[${\rm IV}_{\rm 4}$.19.3.2]{EGA}.
\end{proof}

\begin{remark}\label{rem:stack}
Assume that $k$ is perfect. Then a $G$-curve
$X$ is $G$-normal if and only if the
quotient stack $[X/G]$ is smooth. This follows
from the above proposition in view of the
isomorphism of stacks
$[X/G] \simeq [G^{\#} \times^G X/G^{\#}]$,
which in turn is a direct consequence of
the definitions of such stacks. 
\end{remark}

\section{Generically free actions on curves}
\label{sec:gen}

Throughout this section, we denote by $G$
an infinitesimal group scheme of order 
$p^n$, with Lie algebra $\fg$.

\begin{proposition}\label{prop:inf}
Let $X$ be a curve equipped with a rational 
action of $G$; let $K = k(X)$ and $L = K^G$. 
Then there exists a unique integer 
$m = m(G) \geq 0$ such that $L = k K^{p^m}$. 
Moreover, $m \leq n$ and equality holds if and 
only if the rational $G$-action on $X$ is 
generically free.
\end{proposition}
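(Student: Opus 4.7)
The plan is to translate the problem into one about the fixed subfield $L = K^G \subset K$, and then exploit that $X$ is a curve in order to identify $L$ as an iterated Frobenius subfield of $K$.

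First, by Proposition \ref{prop:reg} I may replace $X$ by a dense open $G$-stable subset on which the action is regular; this does not affect $K$ or $L$. Lemma \ref{lem:field}(iv) then applies: $K/L$ is finite, $K/K^{G^0}$ is purely inseparable, and $K^{G^0}/L$ is separable. Since $G$ is infinitesimal, $G = G^0$, so $K^{G^0} = L$ and $K/L$ is finite purely inseparable, of degree $p^m$ for some $m \geq 0$.

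Next I would show $L = k K^{p^m}$. For any $x \in K$, the extension $L(x)/L$ is purely inseparable of degree $p^j$ for some $j \leq m$, so its minimal polynomial is $T^{p^j} - x^{p^j}$ and $x^{p^j} \in L$; a fortiori $x^{p^m} \in L$. Hence $K^{p^m} \subseteq L$, so $kK^{p^m} \subseteq L$. Now I invoke the curve hypothesis: $K$ is a function field in one variable over $k$, so it admits a $p$-basis $\{y\}$ of size $1$, and by the iteration carried out in Remark \ref{rem:infty}, the monomials $1, y, \ldots, y^{p^m - 1}$ form a basis of $K$ over $kK^{p^m}$. Thus $[K : kK^{p^m}] = p^m = [K : L]$, and comparing degrees in the sandwich $kK^{p^m} \subseteq L \subseteq K$ forces $L = kK^{p^m}$. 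Uniqueness of $m$ is immediate, since $[K : kK^{p^m}] = p^m$ depends injectively on $m$.

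Finally, Corollary \ref{cor:degree} yields $p^m = [K:L]$ divides $|G| = p^n$, so $m \leq n$, with equality if and only if the rational $G$-action is generically free.

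The main obstacle is the middle step: asserting $L = kK^{p^m}$ rests on both the purely inseparable structure of $K/L$ (to obtain $kK^{p^m} \subseteq L$) and the one-variable nature of $K$ (to compute $[K : kK^{p^m}] = p^m$ and close the sandwich). For varieties of higher dimension the subfield $L$ need not be of this simple form, which is why the statement is restricted to curves.
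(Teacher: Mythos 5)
Your proof is correct and follows essentially the same route as the paper's: both reduce to the fact that $K/L$ is finite purely inseparable of degree $p^m$ (via Lemma \ref{lem:field}, with $G = G^0$), sandwich $kK^{p^m} \subseteq L \subseteq K$, close the sandwich by computing $[K : kK^{p^m}] = p^m$ from the one-variable $p$-basis as in Remark \ref{rem:infty}, and finish with Corollary \ref{cor:degree}. Your minimal-polynomial justification of the inclusion $K^{p^m} \subseteq L$ just makes explicit a step the paper takes for granted.
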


\begin{proof}
Recall that the extension $K/L$ 
is finite and purely inseparable
(Lemma \ref{lem:field}). Thus,
$[K:L] = p^m$ for some $m \geq 0$, and 
$k K^{p^m} \subset L$. But since $K$ is 
a function field in one variable, we have
$[K : k K^{p^m}] = p^m$ (indeed, by 
using the tower of field extensions
$K = K_0 \supset k K^p = K_1 \supset
\ldots \supset kK^{p^m} = K_m$
where $K_{i+1} = k K_i^p$ for 
$i = 0,\ldots,m-1$,  
it suffices to show that $[K : kK^p] = p$.
But this follows from the fact that 
every $x \in K \setminus kK^p$ forms
a $p$-basis of the function field in one 
variable $K$ over $k$, see e.g.
\cite[${\rm IV}_{\rm 1}$.2.1.4]{EGA}).
So $L = k K^{p^m}$. The final assertion
follows readily from Corollary 
\ref{cor:degree}.
\end{proof}

\begin{remark}\label{rem:inf}
Let $X$ be a generically free $G$-curve
with quotient $q : X \to Y = X/G$.
Then $X^{(p^n)}$ is a curve with function 
field $k K^{p^n}$, and the morphism 
$F^n_X : X \to X^{(p^n)}$ is finite,
surjective and $G$-invariant. 
Thus, $F^n_X$ factors uniquely as
\[ X \stackrel{q}{\longrightarrow}
Y \stackrel{r}{\longrightarrow} X^{(p^n)},
\]
where $r$ is finite surjective as well;
also, $r$ is birational by
Proposition \ref{prop:inf}.

If $X$ is $G$-normal, then $Y$ is normal 
(Lemma \ref{lem:quotient}) and hence is 
isomorphic to the normalization of 
$X^{(p^n)}$. As a consequence, $g(Y) = g(X)$
where $g$ denotes the geometric genus.
\end{remark}

\begin{lemma}\label{lem:gen}
The following conditions are equivalent
for a curve $X$ equipped with a faithful
action of $G$:

\begin{enumerate}

\item[{\rm (i)}]
The $G$-action is generically free.

\item[{\rm (ii)}] We have $\dim(\fg) = 1$. 

\item[{\rm (iii)}]
We have an isomorphism of algebras
$\cO(G) \simeq k[T]/(T^{p^n})$.

\end{enumerate}

Under these conditions, $G$ is either unipotent
or a form of $\mu_{p^n}$.
\end{lemma}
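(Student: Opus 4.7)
The plan is to establish the cyclic implications $(i) \Rightarrow (ii) \Rightarrow (iii) \Rightarrow (i)$ and then derive the structural dichotomy. For $(i) \Rightarrow (ii)$, observe that generic freeness of the $G$-action passes to the first Frobenius kernel $G_1 \subset G$, so Corollary \ref{cor:degree} applied to the $G_1$-action gives $[K : K^{G_1}] = |G_1|$. Since $G_1$ has height $1$, it acts on $K$ by $k$-derivations, all of which annihilate $kK^p$; hence $K^{G_1} \supset kK^p$ and $|G_1| \leq [K : kK^p] = p$. Since $G$ is a nontrivial infinitesimal group scheme, $G_1$ is nontrivial, forcing $|G_1| = p$ and $\dim \fg = \dim \Lie(G_1) = 1$.

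The implication $(ii) \Rightarrow (iii)$ is a direct consequence of the local Artinian structure of $\cO(G)$: its cotangent space $\fm/\fm^2 \simeq \fg^\vee$ has dimension $1$, so $\fm$ is principal by Nakayama, and the order constraint $\dim_k \cO(G) = p^n$ forces $\cO(G) \simeq k[T]/(T^{p^n})$. For $(iii) \Rightarrow (i)$, condition $(iii)$ gives $\dim \fg = 1$, and faithfulness of $G$ passes to faithfulness of $G_1$ (any subgroup scheme of $G_1$ is a subgroup scheme of $G$); hence the corresponding $p$-Lie algebra map $\fg \hookrightarrow \Der_k(K)$ is injective, with image $kD$ for some nonzero $D$. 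The chain $kK^p \subset K^D \subsetneq K$ combined with $[K : kK^p] = p$ forces $K^D = kK^p$, so $[K : K^{G_1}] = p = |G_1|$ and $G_1$ acts freely on $\Spec(K)$ by Corollary \ref{cor:degree}. To upgrade freeness from $G_1$ to $G$, let $H = \Stab_{G_K}(\Spec K)$: if $H$ were a nontrivial infinitesimal subgroup scheme then $\Lie(H) \neq 0$, and $H \cap G_{1,K}$ would be a nontrivial subgroup scheme of $\Stab_{G_{1,K}}(\Spec K) = 1$, a contradiction; so $H = 1$ and the $G$-action is generically free.

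For the structural statement, the presentation $\cO(G) \simeq k[T]/(T^{p^n})$ exhibits $\cO(G)$ as a quotient of $k[[T]]$, and the Hopf structure on $\cO(G)$ extends to a $1$-dimensional formal group law; by Lazard's theorem every such formal group law is commutative, so $G$ is commutative. Passing to $\bar k$, the connected commutative infinitesimal group scheme $G_{\bar k}$ decomposes as $G_\mu \times G_u$ into multiplicative and unipotent parts, and $\dim \fg = 1$ forces exactly one of these factors to be trivial. If $G_{\bar k} = G_\mu$, then $G_{\bar k}$ is a connected diagonalizable infinitesimal group of order $p^n$ with $\dim \Lie = 1$, hence isomorphic to $\mu_{p^n}$, so $G$ is a $k$-form of $\mu_{p^n}$; otherwise $G_{\bar k}$, and therefore $G$, is unipotent. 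I expect this last paragraph to be the main obstacle, since it rests on the nontrivial commutativity of $1$-dimensional formal group laws and on the multiplicative--unipotent decomposition for commutative infinitesimal group schemes over $\bar k$, both of which I would invoke from standard references rather than reprove; the cyclic equivalences themselves amount to routine bookkeeping with Frobenius kernels and fixed fields.
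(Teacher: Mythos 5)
Your proof of the three equivalences is essentially correct, though it takes a more field-theoretic route than the paper: you run everything through Corollary \ref{cor:degree} together with the inclusion $kK^p \subset K^{G_1}$ and the equality $[K:kK^p]=p$, whereas the paper proves (i)$\Leftrightarrow$(ii) directly by identifying $\Lie(\Stab_G(x))$ with the kernel of the natural map $\fg \to T_xX$ at a smooth point $x \in X(\bar{k})$. Both arguments work, and your treatment of (ii)$\Leftrightarrow$(iii) agrees with the paper's.

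The final assertion is where your proof breaks down, at exactly the step you flagged. The isomorphism $\cO(G) \simeq k[T]/(T^{p^n})$ is an isomorphism of \emph{algebras} only; the comultiplication is an element of $k[X,Y]/(X^{p^n},Y^{p^n})$ whose coassociativity holds only modulo $(X^{p^n},Y^{p^n},Z^{p^n})$. There is no theorem guaranteeing that this truncated structure lifts to an honest one-dimensional formal group law on $k[[T]]$ (and the truncation is not by total degree, so Lazard's extension theory of $n$-buds is not available either, quite apart from the fact that that theory is developed in the commutative setting). Lazard's commutativity theorem applies to genuine formal group laws, so it cannot be invoked here; the sentence ``the Hopf structure on $\cO(G)$ extends to a $1$-dimensional formal group law'' is the gap. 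Note that if your argument were correct it would show that every finite group scheme satisfying (iii) is commutative --- but the introduction of the paper explicitly poses the commutativity of the group schemes acting generically freely on curves as an \emph{open question}, which is strong evidence that no such one-line argument exists. The paper's proof of the dichotomy avoids commutativity entirely: over $\bar{k}$, either $G$ contains no copy of $\alpha_p$, in which case it is diagonalizable by \cite[IV.3.3.7]{DG} and then equal to $\mu_{p^n}$ by (ii); or $\alpha_p \subset G$, whence $G_1 = \alpha_p$ by (ii), and one inducts on $n$ via the exact sequence $1 \to G_1 \to G \to F(G) \to 1$, ruling out the case $F(G) \simeq \mu_{p^{n-1}}$ because a trigonalizable extension would force $G \simeq \alpha_p \rtimes \mu_{p^{n-1}}$ and hence $\dim(\fg) = 2$. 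Your use of the multiplicative--unipotent decomposition over $\bar{k}$ would be fine once commutativity is granted, but commutativity is precisely what you have not established.
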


\begin{proof}
(i)$\Leftrightarrow$(ii) We may assume 
$k$ algebraically closed. Then (i) is equivalent
to the existence of a smooth point $x \in X(k)$ 
such that $\Stab_G(x)$ is trivial. This is in turn
equivalent to $\Lie(\Stab_G(x)) = 0$, since $G$
is infinitesimal. But $\Lie(\Stab_G(x))$ is the
kernel of the natural map $\fg \to T_x X$ 
(the differential at $e$ of the morphism 
$G \to X$, $g \mapsto g \cdot x$), see e.g. 
\cite[III.2.2.6]{DG}. So $\Lie(\Stab_G(x)) = 0$
implies that $\dim(\fg) = 1$. 

Conversely, assume that $\dim(\fg) = 1$.
Choose a smooth point $x \in X(k)$ which is not
fixed by $\fg$ (identified with the Frobenius
kernel $G_1$). Then $\Stab_G(X)_1$ is trivial,
and hence so is $\Stab_G(x)$.

(ii)$\Leftrightarrow$(iii) This follows from 
the fact that $\cO(G)$ is a local $k$-algebra 
of dimension $p^n$ and residue field $k$.

To show the final assertion, we may again 
assume $k$ algebraically closed. If $G$
contains no copy of $\alpha_p$, then 
$G$ is diagonalizable by \cite[IV.3.3.7]{DG}.
Thus, $G \simeq \prod_{i = 1}^r \mu_{p^{m_i}}$
in view of the structure of diagonalizable 
group schemes (see e.g. \cite[IV.1.1.2]{DG}). 
Using (ii), it follows that $r = 1$.

So we may assume that $\alpha_p \subset G$.
By (ii) again, we then have 
$G_1 = \alpha_p$. The relative Frobenius
$F_G$ yields an exact sequence
\[ 1 \longrightarrow G_1 \longrightarrow G
\stackrel{f}{\longrightarrow} F(G) 
\longrightarrow 1,
\]
where $\cO(F(G)) = \cO(G)^p = k[T]/(T^{p^{n-1}})$
in view of (iii). Arguing by induction on $n$,
we may assume that $F(G)$ is either unipotent
or isomorphic to $\mu_{p^{n-1}}$. In the
former case, $G$ is unipotent. In the latter 
case, $G$ is trigonalizable (see
\cite[IV.2.3.1]{DG}) and hence
$G \simeq \alpha_p \rtimes \mu_{p^{n-1}}$
by loc.~cit., IV.2.3.5. But then
$\dim(\fg) = 2$, a contradiction.
\end{proof}

\begin{proposition}\label{prop:tangent}
Let $X$ a generically free $G$-normal curve.

\begin{enumerate}
\item[{\rm (i)}] 
The quotient of $\Omega^1_{X/k}$ by its 
torsion subsheaf is invertible.

\item[{\rm (ii)}] 
The tangent sheaf $\cT_X$ is invertible
and effective.

\end{enumerate}

\end{proposition}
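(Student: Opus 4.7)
The strategy centres on the quotient morphism $q \colon X \to Y := X/G$. Since $G$ is infinitesimal, $G = G^0$, so Corollary~\ref{cor:unibranch} gives that $Y$ is normal, and hence a smooth curve. By miracle flatness ($X$ is Cohen--Macaulay by Corollary~\ref{cor:lci}, and $Y$ is regular of the same dimension), $q$ is finite flat of degree $\vert G \vert = p^n$. Generic freeness and Proposition~\ref{prop:inf} give $L := k(Y) = K^G = k K^{p^n}$, where $K = k(X)$. Hence every $\ell \in L$ is a $k$-linear combination of $p^n$-th powers, whence $d\ell = 0$ in $\Omega^1_{K/k}$.

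For (i), the canonical map $q^* \Omega^1_{Y/k} \to \Omega^1_{X/k}$ therefore vanishes at the generic point; since $q^* \Omega^1_{Y/k}$ is invertible, its image in $\Omega^1_{X/k}$ is a torsion subsheaf. The conormal right-exact sequence then induces a surjection $\Omega^1_{X/Y} \twoheadrightarrow \Omega^1_{X/k}/\mathrm{torsion}$, which, modulo torsion, becomes an isomorphism
\[
\Omega^1_{X/Y}/\mathrm{torsion} \;\xrightarrow{\sim}\; \Omega^1_{X/k}/\mathrm{torsion}
\]
(using that a surjection of torsion-free rank-one sheaves on a curve is an isomorphism, since its kernel is both torsion and torsion-free). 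It remains to show that $\Omega^1_{X/Y}$ is locally free of rank one. For this I would establish, for each $x \in X$ with $y = q(x)$, a cyclic presentation $\cO_{X,x} \cong \cO_{Y,y}[T]/(T^{p^n} - a)$ for some $a \in \cO_{Y,y}$: the extension $K/L$ is simple purely inseparable of degree $p^n$ (any $t \in K \setminus kK^p$ satisfies $K = L(t)$ and $t^{p^n} \in L$, both being function fields in one variable over $k$), and combined with the finite flatness of $\cO_{X,x}$ over the DVR $\cO_{Y,y}$ and $G$-normality this yields a monogenic presentation. Granting it, $\Omega^1_{\cO_{X,x}/\cO_{Y,y}} = \cO_{X,x}\, dT / \bigl(d(T^{p^n} - a)\bigr) = \cO_{X,x} \cdot dT$, since $d(T^{p^n}) = p^n T^{p^n-1}\, dT = 0$ in characteristic $p$ and $da = 0$ as $a \in \cO_{Y,y}$.

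For (ii), invertibility of $\cT_X = \mathcal{H}om_{\cO_X}(\Omega^1_{X/k}, \cO_X) = \mathcal{H}om_{\cO_X}(\Omega^1_{X/k}/\mathrm{torsion}, \cO_X)$ is immediate by duality from (i). For ampleness on a projective $X$, I would exploit the distinguished global section of $\cT_X$ coming from a generator $D$ of $\fg$ (one-dimensional by Lemma~\ref{lem:gen}): the local analysis of (i) shows that $D$ vanishes on the non-free locus, so $\cT_X \cong \cO_X(Z)$ for an effective divisor $Z$ whose positivity yields ampleness. The main obstacle is the cyclic local presentation in (i): although it is immediate at free points via torsor theory for $\alpha_{p^n}$ and $\mu_{p^n}$, and presumably the content of Proposition~\ref{prop:local} at fixed points, its uniform derivation requires either a structure result for finite flat monogenic extensions of DVRs in characteristic $p$ or a careful combination of $G$-normality with the simple purely inseparable form of $K/L$.
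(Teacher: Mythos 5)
Your route is genuinely different from the paper's, and it has a real gap at its central step. The paper disposes of (i) in a few lines: a generator $D$ of the one-dimensional Lie algebra $\fg$ (Lemma \ref{lem:gen}) gives a map of $\cO_X$-modules $D : \Omega^1_{X/k} \to \cO_X$, which is an isomorphism at the generic point by generic freeness, so its kernel is exactly the torsion subsheaf; since the adjoint representation of $G$ on $\fg$ is trivial, $D$ is $G$-equivariant, so its image is a $G$-stable ideal sheaf, hence invertible by Corollary \ref{cor:Gnormal}. Thus $\Omega^1_{X/k}/\mathrm{torsion} \simeq \mathrm{im}(D)$ is invertible, and $\cT_X$ is its dual, carrying $D$ as a non-zero global section. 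Note that you introduce exactly this section $D$ in your treatment of (ii); applied one step earlier, it makes all of your local analysis unnecessary and is where $G$-normality (via Corollary \ref{cor:Gnormal}) actually enters.

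The gap in your argument for (i): everything reduces to $\Omega^1_{X/Y}$ being invertible, for which you want the presentation $\cO_{X,x} \simeq \cO_{Y,y}[T]/(T^{p^n}-a)$ at every closed point, and this is precisely what you do not prove. It is not a formal consequence of finite flatness over the DVR $\cO_{Y,y}$ together with monogenicity of $K/L$: by Nakayama, monogenicity of $\cO_{X,x}$ over $\cO_{Y,y}$ is equivalent to monogenicity of the fibre $\cO(X_y)$ over $\kappa(y)$, and a priori that fibre could have embedding dimension $2$ (e.g.\ of the shape $\kappa[u,v]/(u,v)^2$ at a planar singularity of $X$ where the uniformizer of $Y$ lands in $\fm_x^2$) --- ruling this out is the actual content of the proposition, so the substance is hidden in the step you defer. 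In addition: your case division ``free or fixed'' is not exhaustive, since a closed point may have a non-trivial proper stabilizer; Proposition \ref{prop:local}, which would supply the presentation at fixed points, assumes $k$ perfect while the present statement does not; and at free points the relevant input is not ``torsor theory for $\alpha_{p^n}$ and $\mu_{p^n}$'' ($G$ is only known to be unipotent or a form of $\mu_{p^n}$), but fppf descent of $\Omega^1_{X/Y}$ along the torsor $X \times_Y X \simeq G \times X$ together with $\dim_k \omega_G = 1$, which does work without any monogenicity. So the outline is salvageable on the free and fixed loci, but as written the key local statement is assumed rather than proved.
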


\begin{proof}
(i) Choose a non-zero element of $\fg$, which 
yields a morphism of $\cO_X$-modules
$D : \Omega^1_{X/k} \to \cO_X$. Then $D$ is 
an isomorphism at the generic point, and hence 
its kernel is the torsion subsheaf. Also,
$\Omega^1_{X/k}$ is equipped with a 
$G$-linearization, and $D$ is $G$-equivariant 
(as the adjoint representation of $G$ 
in $\fg$ is trivial by the final assertion
of Lemma \ref{lem:gen}).
Thus, the image of $D$ is a $G$-stable 
ideal. As every such ideal is invertible 
(Corollary \ref{cor:Gnormal}), 
this yields the assertion.

(ii) The sheaf $\cT_X$ is invertible by (i),
and has non-zero global sections. 
\end{proof}

\begin{proposition}\label{prop:local}
Let $X$ a generically free $G$-normal curve 
with quotient $Y$. Let $x \in X$ be 
a closed $G$-fixed point with 
image $y \in Y$. If $k$ is perfect, 
then there exists an isomorphism of 
$\cO_{Y,y}$-algebras
\[ \cO_{X,x} \simeq 
\cO_{Y,y}[T]/(T^{p^n} - w), \]
where $w$ generates the maximal ideal of 
$\cO_{Y,y}$. 
\end{proposition}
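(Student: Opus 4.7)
The plan is to leverage the structural results already established --- namely $G$-normality of $X$, normality of $Y$, and the fact that $X$ is a local complete intersection --- to identify $\cO_{X,x}$ as a DVR free of rank $p^n$ over the DVR $\cO_{Y,y}$, and then to write down the desired isomorphism using a carefully chosen uniformizer.

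First, I would collect the local-ring data at $x$ and $y$. Since $x \in X^G$, the scheme-theoretic orbit $G \cdot x$ equals the reduced point $\{x\}$, so Corollary \ref{cor:Gnormal} gives that $\cI_x = \fm_x$ is invertible. Thus $\cO_{X,x}$ is a one-dimensional Noetherian local domain with principal maximal ideal, hence a DVR; fix a uniformizer $T$. By Lemma \ref{lem:quotient}, $Y$ is normal, so being a curve over the perfect field $k$, it is regular at $y$, and $\cO_{Y,y}$ is a DVR. Since $X$ is a local complete intersection by Corollary \ref{cor:lci}, it is Cohen--Macaulay, and miracle flatness yields that $q : X \to Y$ is flat at $x$. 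As $[K : L] = p^n$ by Corollary \ref{cor:degree}, it follows that $\cO_{X,x}$ is a free $\cO_{Y,y}$-module of rank $p^n$.

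The crux is to prove $\kappa(x) = \kappa(y)$. Passing to $\fm_x$-adic completions, $\widehat{\cO}_{X,x}$ and $\widehat{\cO}_{Y,y}$ are complete equicharacteristic local rings with residue fields $\kappa(x)$ and $\kappa(y)$, both perfect (as finite extensions of $k$). Cohen's structure theorem therefore furnishes \emph{unique} coefficient fields $\kappa(x) \hookrightarrow \widehat{\cO}_{X,x}$ and $\kappa(y) \hookrightarrow \widehat{\cO}_{Y,y}$. The $G$-action extends by continuity to $\widehat{\cO}_{X,x}$ and fixes $\widehat{\cO}_{Y,y}$; by the uniqueness of the coefficient field, it preserves $\kappa(x)$ as a subring, acting on it by $\kappa(y)$-algebra automorphisms. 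Since $\kappa(x)/\kappa(y)$ is finite separable, $\Aut_{\kappa(y)}(\kappa(x))$ is an \'etale group scheme; as $G$ is infinitesimal, the induced action on $\kappa(x)$ must be trivial. Because $G$-invariants commute with the flat base change $\cO_{Y,y} \to \widehat{\cO}_{Y,y}$, we have $\widehat{\cO}_{X,x}^G = \widehat{\cO}_{Y,y}$, so $\kappa(x) \subset \widehat{\cO}_{Y,y}$. Reducing modulo the maximal ideal yields an injective field map $\kappa(x) \hookrightarrow \kappa(y)$, which together with $\kappa(y) \subseteq \kappa(x)$ forces $\kappa(x) = \kappa(y)$.

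To complete the proof, note that $k$ perfect gives $L = K^G = k K^{p^n} = K^{p^n}$, so $T^{p^n} \in L$; combining with $T^{p^n} \in \cO_{X,x}$ and $\cO_{X,x} \cap L = \cO_{X,x}^G = \cO_{Y,y}$, we conclude $T^{p^n} \in \cO_{Y,y}$. Set $w := T^{p^n}$. Since $\kappa(x) = \kappa(y)$ forces the ramification index of $q$ at $x$ to be $p^n$, we have $v_y(w) = v_x(w)/p^n = 1$, so $w$ generates $\fm_y$. Moreover, as $v_x(T) = 1$ is not divisible by $p$, $T \notin K^p$, so (iterating the $p$-basis argument) $\{T^i : 0 \le i < p^n\}$ is a $K^{p^n}$-basis of $K$; hence $K = L(T)$ and the minimal polynomial of $T$ over $L$ is $Z^{p^n} - w$. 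The $\cO_{Y,y}$-algebra homomorphism
\[ \phi : \cO_{Y,y}[Z]/(Z^{p^n} - w) \longrightarrow \cO_{X,x}, \qquad Z \longmapsto T, \]
is a map of free $\cO_{Y,y}$-modules of rank $p^n$; reducing modulo $w$, it sends the $\kappa(y)$-basis $\{Z^i\}_{0 \le i < p^n}$ of the source to the $\kappa(y)$-basis $\{T^i\}_{0 \le i < p^n}$ of $\cO_{X,x}/T^{p^n}\cO_{X,x}$, hence is an isomorphism. By Nakayama's lemma, $\phi$ itself is an isomorphism. The main obstacle is the identification $\kappa(x) = \kappa(y)$, which requires passing to completions and invoking Cohen's structure theorem together with the \'etale/infinitesimal dichotomy for automorphism group schemes of separable residue extensions.
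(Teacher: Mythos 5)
Your proof is correct and follows the same overall skeleton as the paper's: identify $\cO_{X,x}$ and $\cO_{Y,y}$ as DVRs with equal residue fields, show $\cO_{X,x}$ is finite free of rank $p^n$ over $\cO_{Y,y}$, set $w = T^{p^n}$ for a uniformizer $T$, check $w$ lies in $\cO_{Y,y}$ and generates $\fm_y$, and conclude by Nakayama plus a rank count. The one place where you diverge substantially is the identification $\kappa(x) = \kappa(y)$, and there your route is both much heavier and slightly under-justified. The paper's argument is a one-liner: since $G$ is infinitesimal, the quotient $q : X \to Y$ is radicial, so $\kappa(x)/\kappa(y)$ is \emph{purely inseparable}; and $\kappa(y)$ is perfect (a finite extension of the perfect field $k$), so the extension is trivial. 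Your detour through Cohen's structure theorem requires the $G$-action on $\widehat{\cO}_{X,x}$ to preserve the coefficient field $\kappa(x)$, and ``uniqueness of the coefficient field'' only handles honest automorphisms; for a group \emph{scheme} action one must check this functorially on the coaction $a^* : \widehat{\cO}_{X,x} \to \cO(G) \otimes \widehat{\cO}_{X,x}$, e.g.\ by characterizing the coefficient field as $\bigcap_n \widehat{\cO}_{X,x}^{\,p^n}$ (which in fact shows directly that $a^*$ is trivial on it, since $\cO(G)^{p^n} = k$ for $n$ at least the height of $G$ --- making the appeal to \'etaleness of $\Aut_{\kappa(y)}(\kappa(x))$ unnecessary as well). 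So the step is repairable, but as written it glosses over exactly the point that makes it nontrivial, and it can be replaced wholesale by the radicial-plus-perfect observation. Two smaller remarks: the appeal to Corollary \ref{cor:lci} and miracle flatness to get freeness is overkill --- a finite torsion-free module over a DVR is automatically free, which is all the paper uses; and your justification that $w$ generates $\fm_y$ via the ramification index is fine, though the paper's bookkeeping with $\cO(X_y) = \cO_{X,x}/(z^{p^n}) = \cO_{X,x}/(w)$ accomplishes the same thing.
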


\begin{proof}
By Corollaries \ref{cor:unibranch}
and \ref{cor:induced},
$Y$ is smooth at $y$;
also, $X$ is smooth at $x$ in view of
Corollary \ref{cor:Gnormal}.
Since $G$ is infinitesimal, the fiber 
$X_y$ has a unique point $x$ and
the extension $\kappa(x)/\kappa(y)$ is 
purely inseparable. But $\kappa(y)$ is 
perfect (since so is $k$), and hence 
$\kappa(x) = \kappa(y)$. Thus, 
$\cO_{X,x}$ and $\cO_{Y,y}$ are discrete 
valuation rings with the same residue field
$k' = \kappa(x)$; 
moreover, $\cO_{X,x}$ is a finite free 
module over $\cO_{Y,y}$. 
The rank of this module is $p^n$, as 
$[ k(X) : k(Y) ] = p^n$ by Proposition
\ref{prop:inf}. Thus, $X_y$ is a finite
$k'$-subscheme of length $p^n$ of 
$\cO_{X,x}$.  As a consequence,
$\cO(X_y) = \cO_{X,x}/(z^{p^n})$, where 
$z \in \cO_{X,x}$ generates the maximal ideal 
$\fm_x$. On the other hand, 
$\cO(X_y) = \cO_{X,x}/(w)$, where 
$w \in \cO_{Y,y}$ generates $\fm_y$. 
So $z^{p^n} = u w$ where $u$ is a unit 
in $\cO_{X,x}$.
By Proposition \ref{prop:inf}
and the normality of $Y$, we have 
$z^{p^n} \in \cO_{Y,y}$. Thus, $u$ 
is a unit in $\cO_{Y,y}$. Replacing $w$ with  
$uw$, we may thus assume that $z^{p^n} = w$. 
Consider the homomorphism of 
$\cO_{Y,y}$-algebras 
\[ \cO_{Y,y}[T]/(T^{p^n} -w) \longrightarrow
\cO_{X,x}, \quad T \longmapsto z. \]
This is surjective by Nakayama's lemma, and hence
an isomorphism since both sides are free 
$\cO_{Y,y}$-modules of rank $p^n$.
\end{proof}

Finally, we use the above results and methods 
to settle an issue in the classification of
maximal connected algebraic groups of 
birational automorphisms of surfaces. 
This classification (up to conjugation by 
birational automorphisms) was recently obtained 
by Fong for smooth projective surfaces over 
an algebraically closed field, see \cite{Fong}.
One case was left unsettled in positive 
characteristics, see Proposition 3.25 
and Remark 3.26 in loc.~cit. 
This case can be handled as follows:

Assume that $k$ is algebraically closed
and let $S$ be a smooth projective surface
equipped with a faithful action of an
elliptic curve $E$. By loc.~cit., there is 
an $E$-equivariant isomorphism
$S \simeq E \times^G X$, where $G \subset E$
is a finite subgroup scheme,
and $X \subset S$ is a closed $G$-stable 
subscheme of pure dimension $1$
(a priori, $X$ is not necessarily reduced).
This yields an $E$-equivariant morphism
$\psi : S \to E/G$ with fiber $X$
at the origin of the elliptic curve $E/G$.
Using the Stein factorization, 
we may assume that 
$\psi_*(\cO_S) = \cO_{E/G}$; then 
$X$ is connected.

\begin{proposition}\label{prop:reduced}
With the above notation and assumptions,
$X$ is a $G$-normal curve.
\end{proposition}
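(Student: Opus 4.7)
The plan is to reduce the statement to Proposition~\ref{prop:regular} applied with $G^{\#} = E$. An elliptic curve is a smooth connected algebraic group of dimension $1$ containing $G$, so that proposition asserts that a $G$-curve $Y$ is $G$-normal if and only if $E \times^{G} Y$ is regular. Taking $Y = X$ and using that $S = E \times^{G} X$ is smooth (hence regular), this immediately yields $G$-normality of $X$---provided we can verify that $X$ is actually a $G$-curve, i.e., a geometrically integral $1$-dimensional variety over $k$. Since $X$ is already of pure dimension $1$ and connected (the latter from $\psi_{*}\cO_{S} = \cO_{E/G}$), the remaining task is to show that $X$ is reduced and irreducible.

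I expect reducedness to be the main obstacle. The quotient map $q: E \times X \to S$ is a $G$-torsor, since $G$ acts freely on $E$ by translation and hence freely on $E \times X$ via the diagonal action. If $X$ were non-reduced, then $E \times X = E \times_{k} X$ would be non-reduced as well (as $E$ is geometrically reduced over the algebraically closed $k$), and the nilradical $\cN$ of $\cO_{E \times X}$ would be a non-zero $G$-stable coherent ideal sheaf. The substantive step is to argue that this forces a non-zero nilpotent ideal in $\cO_{S}$, contradicting reducedness of $S$. When $G$ is infinitesimal (so unipotent in its infinitesimal structure), the $G$-invariants functor is left-exact and takes non-zero values on any non-zero finite-dimensional $G$-module; combined with the $G$-weight decomposition of $\cO_{E \times X}$ coming from the non-trivial $G$-torsor $E \to E/G$, a non-zero nilpotent in $\cO_{E \times X}$ can be paired with a suitable $\cO_{E}$-eigensection of dual weight to produce a non-zero $G$-invariant nilpotent---that is, a non-zero nilpotent in $\cO_{S}$. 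The \'etale component $\pi_{0}(G)$ is handled by lifting its action to the normalization, as in Remark~\ref{rem:normal}.

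Once reducedness is established, irreducibility follows fairly directly: $X$ is connected and reduced of pure dimension $1$, and any two distinct irreducible components meeting at a point would create non-unibranch singularities in $X$; after product with $E$ and descent through the free $G$-torsor $q$, these would appear as singular points of $S$, contradicting its smoothness. Hence $X$ is irreducible, so it is indeed a $G$-curve, and Proposition~\ref{prop:regular} delivers the conclusion that $X$ is $G$-normal.
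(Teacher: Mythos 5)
Your overall plan---verify that $X$ is a genuine $G$-curve and then invoke Proposition \ref{prop:regular} with $G^{\#} = E$---is the right endgame, and you correctly identify reducedness as the crux. But the argument you propose for reducedness does not work, for two reasons. First, the nilradical of $\cO_{E \times X}$ need \emph{not} be $G$-stable when $G$ is infinitesimal; the failure of reduced subschemes (and normalizations) to be stable under infinitesimal actions is precisely one of the pathologies this paper is built around. Second, and more fundamentally, there is no contradiction to be had: the total space of a $G$-torsor over a smooth base can perfectly well be non-reduced. The degenerate case $X = G \times Y$ with $G$ acting by translation on the first factor gives $S = E \times^G X \simeq E \times Y$, which is smooth, while $X$ is non-reduced whenever $G$ is a non-trivial infinitesimal group scheme. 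So smoothness of $S$ alone cannot force $X$ to be reduced; the Stein factorization normalization $\psi_*(\cO_S) = \cO_{E/G}$ is indispensable, and your argument never uses it. (Two smaller errors in the same passage: an infinitesimal group scheme need not be unipotent---$\mu_{p^n} \subset E$ occurs for $E$ ordinary---and for $\mu_{p^n}$ the invariants functor does kill non-zero modules, so the ``pair with a dual eigensection'' step has no content there.) Your irreducibility argument has the same defect: a finite flat cover of a regular scheme can be singular---indeed $X$ itself is typically singular here---so singularities of $E \times X$ do not descend to singularities of $S$. For $G$ infinitesimal the quotient $E \times X \to S$ is a homeomorphism, which gives irreducibility directly.

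The paper's proof fills the gap as follows. After reducing to $G$ infinitesimal (via the $\pi_0(G)$-torsor $E \times^{G^0} X \to S$) and noting $\cO(G) \simeq k[T]/(T^{p^n})$, it shows the action is generically free and computes the local ring at a free rational point $x$ over $y \in Y = X/G$ as $\cO_{X,x} \simeq \cO_{Y,y}[T]/(T^{p^n} - w)$ with $w \in \fm_y$. If $w \notin k(Y)^{p^n}$ this ring is reduced, and since $X$ is Cohen--Macaulay (flatness of $\psi$) reducedness propagates to all of $X$; then Proposition \ref{prop:regular} applies. If instead $w \in k(Y)^{p^n}$, the local ring is $\cO_{Y,y}[U]/(U^{p^n})$, which produces a rational section of $q : X \to Y$; this extends to a global section $\tau$ because $Y$ is a smooth projective curve, yielding a birational finite $E$-morphism $E \times Y \to S$ which must be an isomorphism, and then $\psi_*(\cO_S) = \cO_{E/G}$ forces $G$ to be trivial. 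That final step is exactly where the hypothesis your argument omits gets used, and it is what rules out the counterexample above.
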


\begin{proof}
We begin with some observations. 
First, $X$ is Cohen-Macaulay, as 
$\psi$ is flat. 

Also, the categorical quotient
$X \to X/G = Y$ exists, and the natural 
morphism $S = E \times^G X \to Y$ is
the categorical quotient by $E$. It follows
that $Y$ is a smooth projective curve.

Finally, $G$ acts faithfully on $X$, since 
$E$ is a commutative algebraic group and 
acts faithfully on $S$.

We now reduce to the case where $G$ is
infinitesimal. For this, we consider
the natural morphism
$f : E \times^{G^0} X \to E \times^G X = S$,
which is a torsor under the finite \'etale
group scheme $\pi_0(G)$. Thus, 
$E \times^{G^0} X$ is smooth, projective 
and of pure dimension $2$. It is also 
connected (since so are $E$ and $X$) 
and equipped with a faithful action of $E$.
This yields the desired reduction.

The scheme $X$ is irreducible, as the 
quotient morphism $E \times X \to S$
is a homemorphism. Also, since $G$ is 
a subgroup scheme of $E$,
its Lie algebra has dimension $1$ and hence
$\cO(G) \simeq k[T]/(T^{p^n})$ as algebras 
(Lemma \ref{lem:gen}). As a consequence,
the Frobenius kernel $G_1$ has order $p$.

We now show that the $G$-action on $X$
is generically free. Since the $G_1$-action
is faithful, there exists $x \in X(k)$
such that $\Stab_{G_1}(x)$ is trivial
(Lemma \ref{lem:gen} again).
Then $\Stab_G(x)$ has a trivial Frobenius
kernel, and hence is trivial as well.

Next, we describe the local structure of 
$X$ at $x \in X_{\fr}(k)$, by adapting 
the argument of Proposition 
\ref{prop:local}. Let $y = q(x) \in Y(k)$. 
Then the quotient $q : X \to Y$ is a $G$-torsor
at $y$, and hence is finite free of
rank $p^n$ at that point. The fiber $X_y$
satisfies 
$\cO(X_y) \simeq k[T]/(T^{p^n})$. Choose
a lift $z \in \cO_{X,x}$ of the class of
$T$ in $\cO(X_y)$. Then 
$z^{p^n} \in \cO_{X,x}^G$, since 
the relative Frobenius morphism 
$F^n_X$ is $G$-invariant. Thus,
$z^{p^n} = w$ where $w \in \cO_{Y,y}$.
As $z$ lifts $T$, we get $w(y) = 0$,
i.e., $w \in \fm_y$. So the homomorphism
of $\cO_{Y,y}$-algebras
\[ f : \cO_{Y,y}[T]/(T^{p^n} - w) 
\longrightarrow \cO_{X,x}, \quad 
T \longmapsto z \]
induces an isomorphism modulo $\fm_y$.
By Nakayama's lemma, it follows that
$f$ is an isomorphism.

Now consider the case where 
$w \notin k(Y)^{p^n}$.
Then $\cO_{X,x}$ is reduced. Since
$X$ is Cohen-Macaulay, it is reduced
as well. 
So $X$ is a $G$-curve; it is $G$-normal
by Proposition \ref{prop:regular}.

It remains to treat the case where 
$w \in k(Y)^{p^n}$. Since $Y$ is normal,
we then have $w = t^{p^n}$ where
$t \in \cO_{Y,y}$. So 
$\cO_{X,x} \simeq \cO_{Y,y}[U]/(U^{p^n})$
as an $\cO_{Y,y}$-algebra. In geometric 
terms, there exists a dense open subset
$V \subset Y$ and a section 
$\sigma : V \to X_{\fr}$
of $q$ above $V$. The schematic closure 
of $\sigma(V)$ in $X$ is a projective 
curve with smooth projective model $Y$.
Thus, $\sigma$ extends to a section 
$\tau : Y \to X$ of $q$.
This yields a morphism
\[ \varphi : E \times Y 
\longrightarrow S, \quad
(e,y) \longmapsto e \cdot \tau(y) \]
which is $E$-equivariant, where 
$E$ acts on $E \times Y$ by translation
on itself. Moreover, $\varphi$ restricts 
to an isomorphism
$G \times V \to q^{-1}(V)$,
since $q$ induces a $G$-torsor
$q^{-1}(V) \to V$ with section $\sigma$.
Thus, $\varphi$ restricts to an 
isomorphism
$E \times^G (G \times V) \to
E\times^G q^{-1}(V)$,
i.e., an isomorphism of $E \times V$
onto an open subset of $S$. 
In particular, $\varphi$ is birational.
Also, the triangle
\[ \xymatrix{
E \times Y \ar[rr]^-{\varphi} 
\ar[dr]^{\gamma} 
& & S \ar[dl]_{\psi} \\
& E/G & \\ 
}\]
commutes, where $\gamma$ is the
composite of the projection
$E \times Y \to E$ with the quotient
map $E \to E/G$. Moreover,  
$\gamma$ and $\psi$ have
irreducible fibers. Thus, $\varphi$
is finite, and hence an isomorphism.
Since $\psi_*(\cO_S) = \cO_{E/G}$,
it follows that $G$ is trivial
and $X = Y$.
\end{proof}

\begin{remark}\label{rem:totaro}
Proposition \ref{prop:reduced}
can be extended to a perfect ground
field $k$. But it fails over any
imperfect field in view of the existence 
of non-trivial pseudo-abelian surfaces, 
see \cite[Sec.~6]{Totaro}
and \cite[Rem.~6.4 (ii)]{Br17a}.
\end{remark}

\begin{remark}\label{rem:reduced}
Conversely, given a $G$-normal curve $X$ 
over an algebraically closed field and 
an embedding of $G$ into an elliptic curve $E$, 
we obtain a smooth projective surface 
$S = E \times^G X$
by Proposition \ref{prop:regular}. 
Moreover, the morphism $f : S \to X/G = Y$ 
(the categorical quotient by $E$)
is an elliptic fibration with general fiber 
$E$. The multiple fibers of $f$ are exactly 
the fibers $S_y$, where $y = f(x)$ and 
$x$ has a non-trivial stabilizer; the 
multiplicity of $S_y$ is the order of
$\Stab_G(x)$. Also, one may easily check 
that $R^1 f_*(\cO_S) = \cO_Y$.

If $X$ is rational (or equivalently, 
$Y$ is rational; see Remark \ref{rem:inf}), 
then the natural morphism $S \to E/G$ is
the Albanese morphism, since its
fibers at closed points are the 
translates of $X$. 
Moreover, these fibers are exactly the
rational curves on $S$. If in addition
$X$ is singular, then it follows that
$S$ is minimal. Using the canonical bundle 
formula for the elliptic fibration $f$
(see \cite{BMII}), one may check that 
the canonical divisor $K_S$ is numerically 
equivalent to a positive multiple of $E$
when $p >3$ and $f$ has at least $3$
multiple fibers. In particular, $S$ has 
Kodaira dimension $1$ under these
assumptions. But $S$ may well have 
Kodaira dimension $0$, for example when
$p \leq 3$ and $S$ is a quasi-hyperelliptic 
surface (see \cite{BMIII}).
\end{remark}

\begin{example}\label{ex:Gnormalbis}
Assume that $k$ is algebraically closed
and consider a projective variant of
Example \ref{ex:Gnormal}: let $X$ be 
the projective plane curve with equation
$z^{p^n} -f(x,y)$, where $f$ is 
a homogeneous polynomial of degree $p^n$
and is not a $p$th power. Assume 
in addition that $p^n \geq 3$; then 
$X$ is singular. 

Consider the action of $\alpha_{p^n}$ 
on $\bP^2$ via 
$g \cdot (x,y,z) = (x,y,z + g(ax + by))$,
where $a,b \in k$. Then $X$ is stable
under this action, and the quotient morphism
is the projection $[x:y] : X \to \bP^1$.
So $X$ is rational by Remark 
\ref{rem:inf} or a direct argument.
Using Corollary \ref{cor:Gnormal}
and the subsequent remark, one may check
that $X$ is $G$-normal for general $a,b$.

Next, let $\mu_{p^n}$ act on $\bP^2$ via 
$t \cdot (x,y,z) = (x,y,tz)$. Then again,
$X$ is stable under this action, with
the same quotient. Moreover, $X$ is
$\mu_{p^n}$-normal if and only if 
$f$ is square-free.

Every elliptic curve $E$ contains copies
of $\mu_{p^n}$ (if $E$ is ordinary) or 
$\alpha_{p^n}$ (if $E$ is supersingular)
for all $n \geq 1$. Denote by 
$G \subset E$ the resulting infinitesimal 
subgroup scheme, and let $X$ be a singular
$G$-normal curve obtained by the above 
construction. 
In view of Remark \ref{rem:reduced},
this yields a smooth projective
minimal surface $S = E \times^G X$,
which is uniruled but not ruled.
By the canonical bundle formula for
the elliptic fibration $S \to \bP^1$,
the Kodaira dimension of $S$ is $1$ 
if $p^n \geq 4$.

\end{example}

\section*{Acknowledgements} 
I thank Pascal Fong, J\'er\'emy Gu\'er\'e, 
Daniel Litt,
Matilde Maccan, Benjamin Morley, 
Immanuel van Santen, Ronan Terpereau 
and Susanna Zimmermann for very helpful 
comments and discussions. Also, many 
thanks to the anonymous referee for 
a careful reading and valuable suggestions.

\end{document}